\documentclass[11pt,a4paper]{amsart}

\usepackage[margin=1.02in]{geometry}
\usepackage{amsmath,amssymb,amsfonts,amsthm,mathtools,mathrsfs}
\usepackage{booktabs,longtable,tabularx}
\usepackage{enumitem}
\usepackage{xcolor}
\usepackage{microtype}
\usepackage{aliascnt}
\usepackage[colorlinks=true,
  linkcolor=blue!55!black,
  citecolor=blue!55!black,
  urlcolor=blue!55!black]{hyperref}
\usepackage[nameinlink,capitalize]{cleveref}

\setlist[itemize]{leftmargin=2.1em}
\setlist[enumerate]{leftmargin=2.3em}
\newcolumntype{Y}{>{\raggedright\arraybackslash}X}
\newcolumntype{P}[1]{>{\raggedright\arraybackslash}p{#1}}

\newtheorem{theorem}{Theorem}[section]
\newaliascnt{proposition}{theorem}
\newtheorem{proposition}[proposition]{Proposition}
\aliascntresetthe{proposition}
\newaliascnt{corollary}{theorem}
\newtheorem{corollary}[corollary]{Corollary}
\aliascntresetthe{corollary}
\newaliascnt{lemma}{theorem}
\newtheorem{lemma}[lemma]{Lemma}
\aliascntresetthe{lemma}
\newaliascnt{conjecture}{theorem}

\aliascntresetthe{conjecture}
\theoremstyle{definition}
\newaliascnt{definition}{theorem}
\newtheorem{definition}[definition]{Definition}
\aliascntresetthe{definition}
\newaliascnt{example}{theorem}

\aliascntresetthe{example}
\theoremstyle{remark}
\newaliascnt{remark}{theorem}
\newtheorem{remark}[remark]{Remark}
\aliascntresetthe{remark}
\newaliascnt{warning}{theorem}

\aliascntresetthe{warning}

\crefname{theorem}{Theorem}{Theorems}
\Crefname{theorem}{Theorem}{Theorems}
\crefname{proposition}{Proposition}{Propositions}
\Crefname{proposition}{Proposition}{Propositions}
\crefname{corollary}{Corollary}{Corollaries}
\Crefname{corollary}{Corollary}{Corollaries}
\crefname{lemma}{Lemma}{Lemmas}
\Crefname{lemma}{Lemma}{Lemmas}
\crefname{conjecture}{Conjecture}{Conjectures}
\Crefname{conjecture}{Conjecture}{Conjectures}
\crefname{definition}{Definition}{Definitions}
\Crefname{definition}{Definition}{Definitions}
\crefname{example}{Example}{Examples}
\Crefname{example}{Example}{Examples}
\crefname{remark}{Remark}{Remarks}
\Crefname{remark}{Remark}{Remarks}
\crefname{warning}{Warning}{Warnings}
\Crefname{warning}{Warning}{Warnings}
\crefname{assumption}{Assumption}{Assumptions}
\Crefname{assumption}{Assumption}{Assumptions}

\DeclareMathOperator{\Span}{Span}

\DeclareMathOperator{\Conv}{Conv}

\DeclareMathOperator{\relint}{relint}
\DeclareMathOperator{\age}{age}

\DeclareMathOperator{\rank}{rank}

\DeclareMathOperator{\Gr}{Gr}

\DeclareMathOperator{\Supp}{Supp}
\DeclareMathOperator{\Spec}{Spec}

\newcommand{\C}{\mathbb C}
\newcommand{\Q}{\mathbb Q}
\newcommand{\Z}{\mathbb Z}
\newcommand{\R}{\mathbb R}
\newcommand{\A}{\mathbb A}
\newcommand{\Gm}{\mathbb G_m}

\newcommand{\IH}{\mathrm{IH}}

\newcommand{\bSigma}{\boldsymbol\Sigma}
\newcommand{\cbSigma}{\check{\boldsymbol\Sigma}}
\newcommand{\Boxop}{\operatorname{Box}}
\newcommand{\Forb}{\mathcal F_{\mathrm{orb}}}
\newcommand{\Fst}{\mathcal F_{\mathrm{st}}}

\newcommand{\Lef}{\mathbb L}

\newcommand{\an}{\mathrm{an}}

\newcommand{\Ess}{\mathcal E}
\newcommand{\Hdg}{\mathrm{Hdg}}

\title[Motivic vanishing cycles at infinity]
{Motivic Vanishing Cycles at Infinity and\\
\large Toric Landau--Ginzburg Mirror Symmetry}
\author{Haoxu Wang}
\address{Morningside Center of Mathematics, Academy of Mathematics and
Systems Science, Chinese Academy of Sciences, Beijing 100190, China}
\email{krassotkinkolya@gmail.com}
\subjclass[2020]{Primary 14B05; Secondary 14M25, 14F40, 52B20}
\keywords{motivic nearby cycles, Newton polytopes, weighted Ehrhart
polynomials, irregular Hodge theory, toric Deligne--Mumford stacks,
Landau--Ginzburg mirror symmetry, spectral pairs, monodromy}
\date{August 18, 2026}

\begin{document}

\begin{abstract}
Motivic nearby cycles at infinity are usually defined from a proper
extension of a regular function to $\mathbb P^1$, whereas the
compactifications selected by Newton data generally extend the function
only as a rational map.  We show that this discrepancy can be bypassed.
Let $f:U\to\mathbb A^1$ be regular on a smooth variety, and let $X$ be a
smooth proper compactification of $U$.  If $X$ admits a reduced divisor
containing $X\setminus U$ and $\operatorname{Supp}\operatorname{div}_X(f)$,
with simple normal crossings on a neighbourhood of the polar locus,
then the direct open-pair class defined from $1/f$ agrees with
Raibaut's compactification-independent motivic nearby cycle at infinity.
The proof reduces the resolution of $X\dashrightarrow\mathbb P^1$ to a local
cancellation for blow-ups of transverse zero--pole intersections.
Consequently, motivic calculations can be carried out on the natural
rational compactification itself.

For a Laurent polynomial nondegenerate at infinity, this comparison
recovers Raibaut's essential-face formula directly from the toric boundary.
Equivariant Hodge realization, middle-degree concentration, and weighted
Ehrhart theory then give complete degree- and weight-refined Newton formulas
for nearby and relative vanishing cohomology and, through the
limiting/irregular comparison, for irregular Hodge numbers.  The
weight-forgetting specialization of the same local decomposition is governed
by a subdivision-independent orbifold polynomial; this polynomial also
computes the zero-potential stringy invariant of compatible, possibly
non-simplicial, toric stacky fans.  This gives a motivic--Hodge derivation of
the corresponding stringy Fano--Landau--Ginzburg mirror identity.  In the
full-dimensional weak Fano case, the weight refinement further matches
monodromy eigenvalues and Jordan-block sizes with inertia ages and
anticanonical Lefschetz strings, up to noncanonical complex Jordan
equivalence.
\end{abstract}
\maketitle
\setcounter{tocdepth}{1}
\tableofcontents

\section{Introduction}
\label{sec:introduction}

The geometry of a regular function is not exhausted by its fibres over
finite values.  As $t$ tends to infinity, the distant fibres form a
degeneration carrying monodromy and limiting Hodge-theoretic data.
For a Laurent polynomial, its Newton polytope suggests that this
asymptotic geometry should be readable from the toric boundary
determined by its faces.
When this polytope is full-dimensional and contains the origin in its
interior---the weak Fano situation considered below---it also supports
compact toric stacky models whose orbifold cohomology carries age
gradings and anticanonical Lefschetz operators.  It is therefore natural
to ask whether the degeneration at infinity and this age--Lefschetz
structure are two realizations of the same polyhedral data.

There is, however, a basic mismatch between this toric picture and the
usual construction of motivic nearby cycles at infinity.  The latter
uses a proper extension of the function to $\mathbb P^1$, whereas the
compactification naturally selected by the Newton polytope generally
extends the Laurent polynomial only as a rational map.  Resolving its
indeterminacies produces a proper model, but obscures the orbitwise
structure from which a Newton formula should emerge.

We prove that this mismatch can be bridged directly.  If an auxiliary
divisor containing the boundary and the support of the divisor of $f$
has simple normal crossings near the polar locus, then the direct
open-pair class on a
smooth proper rational compactification agrees with Raibaut's
compactification-independent motivic nearby cycle at infinity.  The
proof is based on a local cancellation for blow-ups of transverse
zero--pole intersections.  Thus the natural toric compactification can
be used without building its resolution into the calculation.

The paper has three main outputs.  First, the rational-compactification
comparison above makes the natural toric boundary available for direct
motivic calculations.  Second, for Newton-nondegenerate Laurent
polynomials, equivariant Hodge realization yields complete degree- and
weight-refined Newton formulas.  Third, in the weak Fano case, the
weighted local decomposition is identified with the stringy orbifold
realization of the same polytope, giving a motivic--Hodge derivation of
the subdivision-independent Fano--Landau--Ginzburg mirror identity.

The motivic class organizes these results through the following sequence
of realizations and comparisons:
\[
 \begin{aligned}
 \text{motivic nearby cycle at infinity}
 &\longrightarrow \text{nearby limit Hodge data}\\
 &\longrightarrow \text{relative limit Hodge data}
 \longrightarrow \text{irregular Hodge numbers}.
 \end{aligned}
\]
The arrows summarize the comparisons used below; they are not meant as
canonical morphisms.  At the level of Grothendieck groups, this chain
concerns alternating Hodge classes.  In the nondegenerate Laurent case,
concentration on the intrinsic torus and the complementary torus factor
recover the cohomological grading.  Equivariant Hodge realization gives the
alternating Grothendieck class of the compactly supported limit mixed
Hodge structures of a distant fibre.  Limit Poincar\'e duality and the
long exact sequence of the pair, together with the ordinary cohomology
of $U$, pass from the nearby fibre to relative vanishing cohomology.
The limiting/irregular comparison identifies
the relevant Hodge-graded dimensions with irregular Hodge numbers.
Retaining the limit-weight grading gives a genuinely finer statement:
in the full-dimensional weak Fano case, inertia ages recover the
semisimple monodromy, while anticanonical Lefschetz strings recover the
Jordan blocks of the unipotent logarithm.

\subsection{Rational compactifications and the main comparison}
\label{subsec:intro-overview}

Let $f:U\to\mathbb A^1$ be a regular function on a smooth variety over
a field $k$ of characteristic zero.  Since all constructions are additive
over connected components and a component on which $f$ is identically zero
contributes nothing at infinity, we may assume that $f$ is nonzero on every
connected component.  Raibaut's motivic nearby cycle at
infinity $\mathcal S_{f,\infty}$ is defined from a proper extension of
$f$ to $\mathbb P^1$ and is independent of that extension
\cite{raibaut2010fibre,raibaut2012singularites}.  To compare it with a
natural rational model, suppose that $X$ is a smooth proper
compactification of $U$ and write
\[
 \operatorname{div}_X(f)=D_0-D_\infty.
\]
The reciprocal $1/f$ is regular on $X\setminus|D_0|$.  Applying the
open-pair nearby-cycle construction on this open set defines a direct
class $\mathcal S_{f,\infty}^{X,\mathrm{rat}}$.  Our main result is the
following comparison.

\begin{theorem}[Direct computation on a rational compactification]
\label{thm:intro-rational-comparison}
Let $X$ be a smooth proper compactification of $U$.  Suppose that
there is a reduced divisor $\Delta$ containing $X\setminus U$ and the
support of $\operatorname{div}_X(f)$ which is simple normal crossings
on a Zariski neighbourhood of the pole divisor $|D_\infty|$.  Then
\[
 \mathcal S_{f,\infty}^{X,\mathrm{rat}}
 =\mathcal S_{f,\infty}
 \quad\text{in }\mathcal M_k^{\widehat\mu}.
\]
Consequently, the direct class is independent of the smooth proper
rational compactification satisfying this local SNC hypothesis.
\end{theorem}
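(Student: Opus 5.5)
The approach is to resolve the indeterminacy of $f:X\dashrightarrow\mathbb P^1$ by a sequence of blow-ups adapted to $\Delta$, and to show that each blow-up leaves the direct open-pair class unchanged. At the end one reaches a proper extension, on which the direct class is by definition Raibaut's $\mathcal S_{f,\infty}$.

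First I would make the direct class explicit. Near $|D_\infty|$ the divisor $\Delta$ is SNC, so $X\setminus|D_0|$ with the function $g=1/f$ is an open pair. The reduced structure of $\Delta$ gives a Denef--Loeser type formula: strata $E_I^\circ$ of components of $\Delta$ meeting $|D_\infty|$ but not $|D_0|$, with multiplicities $N_i$ of $g$ (pole orders) and the unit parts carrying the $\widehat\mu$-covers $\widetilde E_I^\circ$. Components of $\Delta$ inside $X\setminus U$ along which $f$ has neither zero nor pole have $N_i=0$. They enter only through the open-pair (boundary) correction, with the factor $(1-\mathbb L)^{|I|-1}$ modified exactly as in Raibaut's formula for the nearby cycle at infinity. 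The indeterminacy locus of $f$ lies in $|D_0|\cap|D_\infty|$, which near $|D_\infty|$ is a union of transverse intersections $E_i\cap E_j$ with $i$ a zero component and $j$ a pole component.

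Second, I would resolve this indeterminacy by iterated blow-ups of codimension-two strata $E_i\cap E_j$, with $i$ a zero component and $j$ a pole component, repeating until no zero component meets a pole component. This is the standard toric/Euclidean-algorithm resolution of $x^a/y^b$ in local SNC coordinates. It preserves smoothness and the SNC hypothesis near the new polar locus, and it terminates because $\min(a,b)$ decreases lexicographically along each offending pair. After termination, $f$ extends to a morphism $\widetilde X\to\mathbb P^1$ whose fibre over $\infty$ is SNC together with the boundary. Raibaut's independence of the extension, combined with his formula for an SNC proper extension, then identifies $\mathcal S^{\widetilde X,\mathrm{rat}}_{f,\infty}$ with $\mathcal S_{f,\infty}$. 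It therefore suffices to prove invariance of the direct class under a single such blow-up.

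Third comes the local cancellation, which I expect to be the main obstacle. Take a blow-up $\pi$ along $Z=E_i\cap E_j$, where locally $f=u\,x^a y^{-b}$. The exceptional divisor $E$ has multiplicity $a-b$ for $f$. There are three cases. If $a>b$, then $E$ is a zero component, so it is removed from the domain of $1/f$; the new strata $E^\circ\cap E_j$ replace the old $Z^\circ$ contribution, and I must show the removed piece contributes zero. If $a<b$, then $E$ is a pole component with $N_E=b-a$; its open stratum is a $\mathbb G_m$-bundle over $Z^\circ$ (minus two sections), so its class is $(\mathbb L-1)\cdot[\cdot]$ with a $\mu_{b-a}$-cover. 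The remaining new codimension-two stratum meets the zero component $\widetilde E_i$ again. If $a=b$, then $E$ is horizontal, with $f|_E$ dominant. In each case I would check that the contributions over $Z$ cancel: the term $(1-\mathbb L)$ from $E^\circ$ against the term from $E\cap\widetilde E_j$, with the $\widehat\mu$-covers compared via the explicit local equation $u\,x^ay^{-b}$. The key identity is that the cover of $E^\circ$, namely $\{t^{b-a}=\text{unit}\}$ over a $\mathbb G_m$-torsor, equals in $\mathcal M^{\widehat\mu}_k$ the corresponding class over $Z^\circ$ times $(\mathbb L-1)$, up to a trivial-action term which cancels. This is a torus computation: fibrewise it reduces to the motivic nearby fibre of $x^ay^{-b}$ restricted to $x\neq0$, and I would verify it by showing that both sides equal $0$ (no contribution from points of $Z$ with $x=0$, since they lie in $|D_0|$) before and after the blow-up. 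Finally, the independence statement follows immediately, since both sides of the comparison are equal to the intrinsic $\mathcal S_{f,\infty}$.
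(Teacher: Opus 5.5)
Your global strategy is the paper's: resolve $X\dashrightarrow\mathbb P^1$ by toroidal blow-ups of transverse zero--pole intersections, show the direct class is invariant at each step, and apply Raibaut's theorem on the resulting proper model (Lemma~\ref{lem:zero-pole-cancellation} and Proposition~\ref{prop:rational-comparison}). The gap is in the step that carries the content, the local cancellation. You correctly identify the two strata that must cancel, but your verification of the cancellation does not work.

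In your notation $f=u\,x^ay^{-b}$, the case $a>b$ is trivial, and you describe it wrongly. There $E\subset|D_0'|$, so $\Omega_{X'}=\pi^{-1}(\Omega_X)\cong\Omega_X$. No new strata appear, and there is no ``old $Z^\circ$ contribution'', since $Z\subset|D_0|$ lies outside $\Omega_X$.

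The only nontrivial case is $a<b$. There $\Omega_{X'}$ strictly contains $\pi^{-1}(\Omega_X)$, and the new strata $E^\circ$ and $E\cap\widetilde E_j$ lie in $\Omega_{X'}$, in the zero fibre of $1/f$, not in $|D_0'|$. Your argument is that there is no contribution over $Z$ ``before and after the blow-up'' because points of $Z$ lie in $|D_0|$. That is valid only before the blow-up. After it, the vanishing of the new contribution is exactly what has to be proved. Nor can you get it from \eqref{eq:open-pair-proper-modification}, because $\Omega_{X'}$ does not map to $\Omega_X$. So the argument is circular at the crucial point.

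What is needed is the exact equivariant identity
\[
 [\widetilde{E^\circ}]=(\Lef-1)\,[\widetilde{E\cap\widetilde E_j}]
 \quad\text{in }\mathcal M_k^{\widehat\mu},
\]
and it is not the naive product your sketch suggests. In the chart $y=\gamma x$ one has $1/f=u^{-1}\gamma^{b}x^{b-a}$, so the unit along $E^\circ$ is $u^{-1}\gamma^{b}$ and involves the fibre coordinate. Hence $\widetilde{E^\circ}$ is a $\mu_c$-cover, with $c=\gcd(b-a,m_s:s\in I)$, that is not pulled back from $Z$. The section instead carries a $\mu_e$-cover with $e=\gcd(c,b)$.

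A concrete case: take $u=1$, $a=1$, $b=3$, and no further components. Then $\widetilde{E^\circ}=\{z^2\gamma^3=1\}$ is isomorphic to $\Gm$ via $\theta=z^{-1}\gamma^{-1}$, with $\mu_2$ acting by $\theta\mapsto-\theta$. Its class is $\Lef-1$, matching $(\Lef-1)$ times the class $1$ of the section's trivial cover. A product $\Gm\times(\mu_2\text{-torsor})$ would instead realize to $(uv-1)(\mathbf e_0+\mathbf e_{1/2})$.

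The missing ingredient is the paper's unimodular monomial change:
\begin{itemize}
\item write $b=eb'$ and $c=ec'$, choose $sb'+tc'=1$, and set $\eta=z^{c'}\gamma^{b'}$ and $\theta=z^{-s}\gamma^{t}$;
\item this identifies $\widetilde{E^\circ}$ locally with $\{\eta^e=u\}\times\Gm$;
\item $\mu_c$ acts on $\eta$ through the canonical map $\mu_c\to\mu_e$, $\xi\mapsto\xi^{c'}$, which is exactly the $\widehat\mu$-structure on $\widetilde{E\cap\widetilde E_j}$, and it acts linearly on $\theta$;
\item these charts glue to the complement of the zero section of an equivariant line bundle over $\widetilde{E\cap\widetilde E_j}$.
\end{itemize}
The affine-lifting and scissor relations then give $(1-\Lef)^{|I|}(\Lef-1)+(1-\Lef)^{|I|+1}=0$. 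There is no residual ``trivial-action term''.

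Separately, your explicit formula should use the strata of the actual open-pair boundary, not those of $\Delta$. Extra components of $\Delta$ may only subdivide strata. Removing them from the strata would compute a different open pair.
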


The essential calculation is local.  The actual complement of the open pair
determines the nearby-cycle strata, whereas the auxiliary SNC divisor serves
only as a toroidal datum for resolving the rational map.  Choose a resolution
of the open-pair boundary compatible with that divisor.  Additional auxiliary
components merely subdivide the nearby-cycle strata, and the pieces recombine
by the scissor relation.  When a
positive-order component of $1/f$ meets a negative-order component
transversally, blowing up their intersection creates two
nearby-cycle strata whose equivariant classes cancel.  A toroidal
sequence of such zero--pole blow-ups resolves
$X\dashrightarrow\mathbb P^1$; invariance of the direct open-pair class
under each step then identifies it with Raibaut's invariant on the
resolved proper-morphism compactification.  Proposition
\ref{prop:rational-comparison} and Lemma
\ref{lem:zero-pole-cancellation} give the precise statement and
calculation.

To the best of our knowledge, no such direct comparison has been isolated
for an arbitrary smooth proper rational compactification under a local SNC
condition only along the polar locus.  The Newton formulas themselves
originate in Raibaut's construction and computation of the
compactification-independent motivic Milnor fibre at infinity
\cite{raibaut2010fibre,raibaut2012singularites}, while Matsui and Takeuchi
developed closely related Newton-polyhedral descriptions of monodromy and
motivic Milnor fibres at infinity
\cite{matsui2011monodromy,MatsuiTakeuchiInfinity}.
A related compactification-independence theorem for irregular Hodge
filtrations on Deligne--Mumford stacks is proved in
\cite{WangCompactificationIndependence}.  That result concerns filtered
twisted de Rham theory, whereas the theorem above compares a direct motivic
open-pair class with Raibaut's invariant.  This comparison makes the
natural toric compactification available for motivic calculations without
incorporating a resolution into the model.

\subsection{The realization chain}
\label{subsec:intro-polynomial-hierarchy}

We now make the realization chain described above precise.
Let $(U,f)$ be a smooth complex Landau--Ginzburg model of dimension
$d$, and let $U_t=f^{-1}(t)$ be a smooth fibre for $|t|\gg0$.  As $t$
varies near infinity, the groups
$H^k(U^{\mathrm{an}},U_t^{\mathrm{an}};\mathbb Q)$ and
$H_c^j(U_t^{\mathrm{an}};\mathbb Q)$ carry limiting mixed Hodge
structures and quasi-unipotent monodromy.
We use the loop convention fixed in
\eqref{eq:affine-and-limit-monodromy-conventions}: $T_\Phi$ is induced
by a positive loop in $s=t^{-1}$, while
$T_\infty=T_\Phi^{-1}$ is the counterclockwise large-circle
monodromy in the affine $t$-plane.
We denote the resulting filtrations by $F_{\lim}$ and $W_{\lim}$ and
use the following degree-refined scalar polynomials.  All monodromy
eigenspaces below are generalized eigenspaces after extension of
scalars to $\mathbb C$:
\begin{align}
 E_{\Phi}^{\infty}(U,f;u,v,x)
 &=\sum_{\substack{0\leq\alpha<1\\k,p,\ell}}
 \dim\operatorname{Gr}_{F_{\lim}}^p
      \operatorname{Gr}_{p+\ell}^{W_{\lim}}
      H^k(U^{\mathrm{an}},U_t^{\mathrm{an}};\mathbb C)_{
      \exp(-2\pi i\alpha)}
 u^{p+\alpha}v^\ell x^k,
 \label{eq:intro-degree-refined-vanishing}\\
 E_{\Psi}^{\infty}(U,f;u,v,x)
 &=\sum_{\substack{0\leq\alpha<1\\j,p,\ell}}
 \dim\operatorname{Gr}_{F_{\lim}}^p
      \operatorname{Gr}_{p+\ell}^{W_{\lim}}
      H_c^j(U_t^{\mathrm{an}};\mathbb C)_{\exp(2\pi i\alpha)}
 u^{p-\alpha}v^\ell x^j.
 \label{eq:intro-degree-refined-nearby}
\end{align}
The subscripts $\Phi$ and $\Psi$ distinguish the relative vanishing object
from the nearby fibre.  The variable $x$ records cohomological degree.
Together with the monodromy character, the exponents of $u$ and $v$
record the Hodge index $p$ and the weight index $p+\ell$: the exponent
of $v$ is $\ell$, while that of $u$ is $p+\alpha$ for $E_\Phi^\infty$
and $p-\alpha$ for $E_\Psi^\infty$.
Character-valued lifts, needed only for equivariant Hodge realization, are
introduced in Subsection~\ref{subsec:hodge-realization-infinity}.

The Grothendieck group directly sees only the Euler specializations at
$x=-1$.  In particular, equivariant Hodge realization followed by nearby
scalarization gives
\begin{equation}
 \operatorname{scal}_{\Psi}
 \chi_{\Hdg}^{\widehat\mu}(\mathcal S_{f,\infty})
 =E_{\Psi}^{\infty}(U,f;u,v,-1).
 \label{eq:intro-motivic-hodge-realization}
\end{equation}
If $E_{\mathrm{ord}}(U;u,v)$ is the signed character-valued Hodge
polynomial of ordinary cohomology, the action on $U$ is trivial, so we
suppress its factor $\mathbf e_0$.  The long exact sequence of
$(U,U_t)$ and limit Poincar\'e duality then give
\begin{equation}
 E_{\Phi}^{\infty}(U,f;u,v,-1)
 =E_{\mathrm{ord}}(U;u,v)
 -(uv)^{d-1}E_{\Psi}^{\infty}(U,f;u^{-1},v^{-1},-1).
 \label{eq:intro-nearby-vanishing-relation}
\end{equation}
Thus the motivic Milnor fibre first determines the equivariant
Hodge--Deligne polynomial of the nearby limit structure, and the
relative identity converts it into the signed vanishing polynomial.

We next relate these polynomials to irregular Hodge theory.  Yu
introduced the irregular Hodge filtration $F_{\mathrm{irr}}$ on the
twisted de Rham cohomology
$H^k_{\mathrm{dR}}(U,f):=\mathbb H^k\!\left(U,
(\Omega_U^\bullet,\mathrm d+\mathrm df\wedge)\right)$; this theory was
subsequently developed in
\cite{Yu2014,EsnaultSabbahYu2017,SabbahYuIrregular}.  Its graded
dimensions define the irregular Hodge numbers and their generating
polynomial
\[
 f^{\lambda,\mu}(U,f)
 :=\dim_{\mathbb C}\operatorname{Gr}_{F_{\mathrm{irr}}}^{\lambda}
 H_{\mathrm{dR}}^{\lambda+\mu}(U,f),
 \qquad
 \mathcal F(U,f;X,Y)
 :=\sum_{\lambda,\mu}f^{\lambda,\mu}(U,f)X^\lambda Y^\mu,
\]
where $\lambda,\mu\in\mathbb Q$ and $\lambda+\mu\in\mathbb Z$.
Qin--Zhang \cite[Theorem~1.1.1]{QinZhangIrregular} prove that for an
arbitrary regular function on a smooth quasi-projective complex variety
and for $p\in\mathbb Z$ and $0\leq\alpha<1$,
\begin{equation}
 \dim\operatorname{Gr}_{F_{\mathrm{irr}}}^{p+\alpha}
 H^k_{\mathrm{dR}}(U,f)
 =
 \dim\operatorname{Gr}_{F_{\lim}}^p
 H^k(U^{\an},U_t^{\an};\mathbb C)_{
 \exp(-2\pi i\alpha)}.
 \label{eq:intro-qin-zhang-comparison}
\end{equation}
Equivalently,
\begin{equation}
 \mathcal F(U,f;X,Y)
 =E_{\Phi}^{\infty}(U,f;X/Y,1,Y).
 \label{eq:intro-irregular-recovery}
\end{equation}
If $H^\bullet_{\mathrm{dR}}(U,f)$ is concentrated in degree $d$, the Euler
specialization $x=-1$ has a single sign, and
\eqref{eq:intro-irregular-recovery} becomes
\begin{equation}
 \mathcal F(U,f;X,Y)
 =(-1)^dY^dE_{\Phi}^{\infty}(U,f;X/Y,1,-1).
 \label{eq:intro-concentrated-irregular-recovery}
\end{equation}
For a Laurent polynomial nondegenerate at infinity, after passage to
its intrinsic torus, the required middle-degree concentration follows
from the Newton theorem of Adolphson--Sperber
\cite[Theorem~4.13]{AdolphsonSperberTwisted}; see
Lemma~\ref{lem:laurent-middle-degree-concentration}.  Consequently,
\eqref{eq:intro-concentrated-irregular-recovery}, together with
\eqref{eq:intro-motivic-hodge-realization} and
\eqref{eq:intro-nearby-vanishing-relation}, reduces the calculation of
$\mathcal F(U,f;X,Y)$ to
$\operatorname{scal}_{\Psi}
\chi_{\Hdg}^{\widehat\mu}(\mathcal S_{f,\infty})$ and the ordinary
Hodge polynomial of $U$.

\subsection{Newton formulas and the weak Fano mirror}
\label{subsec:intro-weak-fano-compatibility}

The mirror-theoretic role of the Newton formula becomes visible through
weighted local decomposition.  On the Landau--Ginzburg side, its local
factors encode the monodromy eigenspaces and limit-weight filtration at
infinity.  On the compact toric side, the same factors encode inertia
ages and the Lefschetz decomposition of quotient-fan intersection
cohomology.  At $v=1$, their common scalar contribution is packaged by
the polynomial $P_{\mathrm{orb}}(P;t)$ appearing below; retaining the
limit-weight grading preserves the finer Lefschetz-string and
Jordan-block data.

Let $M$ and $N=M^\vee$ be dual lattices of rank $d$, let
$P\subset N_{\mathbb R}$ be a positive-dimensional lattice polytope
containing $0$, and put $r:=\dim P$.  Fix a Laurent polynomial
$f\in\mathbb C[N]$ which is nondegenerate at infinity and has Newton
polytope at infinity $P_{f,\infty}=P$.  Applying the comparison theorem
on the toric compactification first gives Raibaut's essential-face
formula for $\mathcal S_{f,\infty}$; the relative-link calculation
accounts for the cancellation of the nonessential boundary strata.

Equivariant Hodge realization and the hypersurface formulas of
Katz--Stapledon and Stapledon
\cite{KatzStapledonLocal,StapledonMonodromy} give the full degree- and
weight-refined Newton--Hodge formula of
Theorem~\ref{thm:global-vanishing-hstar-new}.  It is a sum over the
essential faces of $P$, with local terms given by weighted $h^*$-polynomials.
The long exact sequence of the pair and limit Poincar\'e duality recover
the complete nearby polynomial $E_{\Psi}^{\infty}(T_M,f;u,v,x)$ in
Corollary~\ref{cor:full-nearby-newton-formula}.  Thus both sides of the
realization chain retain the cohomological-degree and limit-weight
variables.

At $v=1$, the weighted local decomposition collects the face
contributions in the polynomial $P_{\mathrm{orb}}(P;t)$.  This
polynomial is independent of the subdivision; see
Subsection~\ref{subsec:stringy-orbifold-poincare}.  Corollary
\ref{prop:newton-stringy-orbifold-new} then gives
\begin{equation}
 E_{\Phi}^{\infty}(T_M,f;u,1,x)
 =u^rx^r(1+ux)^{d-r}P_{\mathrm{orb}}(P;u^{-1}).
 \label{eq:intro-lg-orbifold-bridge}
\end{equation}
Combining this identity with
\eqref{eq:intro-irregular-recovery} yields
\begin{equation}
 \mathcal F(T_M,f;X,Y)
 =X^r(1+X)^{d-r}P_{\mathrm{orb}}(P;Y/X).
 \label{eq:intro-newton-irregular-bridge}
\end{equation}

We compare this torus-side formula with the stringy invariant of the
companion article \cite{StringyICarr}.  Let $\bSigma$ be any
quasi-projective stacky log-$\mathbb Q$-Gorenstein fan in $N$ with
convex radial support $P_{\bSigma}=P$, and write
$\Fst(\bSigma,g;X,Y)$ for its subdivision-independent stringy
irregular Hodge invariant.  For a simplicial presentation this is the
orbifold irregular Hodge polynomial used by Harder--Lee
\cite{HarderLee2024}; the general definition and the radial-support
notation are recalled in Section~\ref{sec:weak-fano-comparison}.
Proposition~\ref{prop:stringy-orbifold-realization-new} identifies the
zero-potential toric side as
\begin{equation}
 \Fst(\bSigma,0;X,Y)
 =(1+X)^{d-r}P_{\mathrm{orb}}(P;XY).
 \label{eq:intro-zero-potential-orbifold}
\end{equation}
Since $\Fst(\{0\},f;X,Y)=\mathcal F(T_M,f;X,Y)$,
\eqref{eq:intro-newton-irregular-bridge} and
\eqref{eq:intro-zero-potential-orbifold} give
\begin{align}
 \Fst(\{0\},f;X,Y)
 &=X^r(1+X)^{d-r}P_{\mathrm{orb}}(P;Y/X)\notag\\
 &=X^d\Fst(\bSigma,0;X^{-1},Y).
 \label{eq:intro-weak-fano-mirror}
\end{align}
This is the subdivision-independent stringy Harder--Lee identity of
\cite[Corollary~7.2]{StringyICarr}; the calculation above provides its
motivic Newton realization and fixes the relevant normalizations.

When $r=d$ and $0\in\operatorname{int}(P)$, the specialization $v=1$
forgets the limit-weight filtration and hence the nilpotent part of
monodromy.  The full weighted local--support factorization retains this
information: the local weighted $l^*$-terms determine the semisimple
spectrum, while the primitive intersection cohomology of the quotient
fans determines the Lefschetz strings and hence the Jordan-block
sizes.  Proposition~\ref{prop:weak-fano-pure-sheaf-decomposition}
realizes the same local data in an orbifold-graded Lefschetz-module
decomposition on the compact side, whose graded dimensions and
eigenvalue-graded Jordan types are subdivision-independent.

Choose a projective crepant simplicial subdivision
$\boldsymbol\Gamma\to\bSigma$, let $\mathcal X_\Gamma$ be the
corresponding toric Deligne--Mumford stack, and write
\[
 Y_t:=f^{-1}(t),
 \qquad
 V_{\mathbb C}:=H^d(T_M^{\an},Y_t^{\an};\mathbb C)
 \quad (|t|\gg0).
\]
Let $T_{\infty,s}$ and $N_\infty$ be the semisimple monodromy and the
normalized logarithm of its unipotent part, and let $A_{\mathrm{age}}$
act on each inertia sector of age $a$ by the scalar $e^{-2\pi i a}$.
Combining the two local decompositions gives
Corollary~\ref{cor:weak-fano-age-monodromy}:
\[
 \left(V_{\mathbb C},T_{\infty,s},N_\infty\right)
 \cong_{\mathrm{noncan}}
 \left(
  H^*_{\mathrm{orb}}(\mathcal X_\Gamma;\mathbb C),
  A_{\mathrm{age}},
  -c_1^{\mathrm{orb}}(\mathcal X_\Gamma)\cup(-)
 \right).
\]
Thus the scalar weak Fano mirror identity is refined to an equality of
eigenvalue-graded Jordan types.  In the simplicial, vertex-supported
case this specializes to the sectorwise comparison of
\cite[Corollary~4.5]{WangNewtonHodge}; the pure-sheaf formulation
extends it to the nonsimplicial weak Fano setting considered here.
The comparison remains numerical and noncanonical over $\mathbb C$;
it is not an isomorphism of rational polarized limit mixed Hodge
structures.

\subsection{Organization}
\label{subsec:intro-organization}

Section~\ref{sec:motivic-background} fixes the motivic and Hodge
realization conventions, and Section~\ref{sec:nearby-at-infinity}
reviews nearby cycles at infinity.  Section
\ref{sec:rational-compactifications} proves the rational-compactification
comparison by zero--pole cancellation, while Section
\ref{sec:newton-face} derives the essential-face Newton formula.
Section~\ref{sec:hodge-ehrhart} computes its Hodge realization by
weighted Ehrhart theory, introduces $P_{\mathrm{orb}}$, and extracts
the numerical semisimple and unipotent monodromy data.  Finally,
Section~\ref{sec:weak-fano-comparison} identifies $P_{\mathrm{orb}}$
with the zero-potential stringy invariant and derives the toric
Fano--Landau--Ginzburg mirror formula.  In the full-dimensional weak
Fano case, it lifts this identity to the compact-side age--Lefschetz
decomposition and proves the resulting age--monodromy correspondence.

\section{Monodromic motives and equivariant Hodge realization}
\label{sec:motivic-background}
\label{sec:conventions}

\subsection{Grothendieck rings and monodromic motives}
\label{subsec:conventions-monodromic}

Let $k$ be a field of characteristic zero.  We use the relative and
equivariant Grothendieck rings of
\cite[Sections~2.1--2.2]{guibert2006iterated}; see also
\cite[Section~1.1]{raibaut2012singularites} for the equivalent
$\mathbb G_m$-equivariant conventions used for motivic Milnor fibers at
infinity.  Let $\widehat\mu=\varprojlim_m\mu_m$.  A
$\widehat\mu$-action is
understood to factor through some $\mu_m$, and it is \emph{good} if
every orbit is contained in an affine open subset.  

For a $k$-variety $S$, let
\[
 \mathcal M_S^{\widehat\mu}
 :=K_0^{\widehat\mu}(\operatorname{Var}_S)[\Lef^{-1}]
\]
be the localized monodromic Grothendieck ring.  Here
$K_0^{\widehat\mu}(\operatorname{Var}_S)$ is generated by $S$-varieties
with good $\widehat\mu$-action over the trivial action on $S$, modulo
the equivariant scissor relation
\[
 [X\to S,\sigma]
 =[Z\to S,\sigma|_Z]
  +[X\setminus Z\to S,\sigma|_{X\setminus Z}]
\]
for every closed $\widehat\mu$-invariant subvariety $Z\subset X$, and
the affine-lifting relation
\[
 [X\times\mathbb A_k^n,\sigma]
 = [X\times\mathbb A_k^n,\sigma']
\]
whenever $\sigma$ and $\sigma'$ are two affine liftings of the same
action on $X$.  Multiplication is induced by fiber product over $S$
with the diagonal action, and $\Lef$ is the class of $\mathbb A^1_S$
with trivial action.  Composition with a morphism $q:S\to S'$ defines
$q_!: \mathcal M_S^{\widehat\mu}\to
\mathcal M_{S'}^{\widehat\mu}$.  This is the direct-image convention of
\cite[Section~2.3]{guibert2006iterated} and
\cite[Section~1.1.4]{raibaut2012singularites}: it is defined simply by
composition.

\subsection{Equivariant Hodge realization and scalarization}
\label{subsec:equivariant-hodge-scalarization}

The coefficient ring for equivariant Hodge polynomials is
\[
 \Lambda_{\widehat\mu}
 =K_0\!\left(\operatorname{Rep}^{\mathrm{fin}}_\C(\widehat\mu)\right)
 \simeq\Z[\Q/\Z].
\]
We denote by $\mathbf e_\alpha$ the class of the character with
eigenvalue $\exp(2\pi i\alpha)$ and use the involution
\[
 \mathbf e_\alpha^\vee=\mathbf e_{-\alpha}.
\]
The same superscript $\vee$ denotes coefficient-wise involution on
$\Lambda_{\widehat\mu}$-valued polynomials.

Over $\C$, the equivariant Hodge--Deligne realization is the ring
homomorphism
\begin{equation}
 \chi_{\Hdg}^{\widehat\mu}:
 \mathcal M_\C^{\widehat\mu}
 \longrightarrow
 \Lambda_{\widehat\mu}[u,v,(uv)^{-1}].
 \label{eq:hodge-realization-map-convention}
\end{equation}
For a complex variety $U$ with good finite action, define the
character-valued compactly supported and ordinary-cohomology Hodge
polynomials by
\begin{align}
 E_c(U;u,v)
 &:=\sum_{\substack{j,p,\ell\\0\leq\alpha<1}}(-1)^j
 \dim\Gr_F^p\Gr_{p+\ell}^W
 H_c^j(U;\mathbb Q)_{\exp(2\pi i\alpha)}\,
 \mathbf e_\alpha u^pv^\ell,
 \label{eq:compact-equivariant-hodge-polynomial}\\
 E_{\mathrm{ord}}(U;u,v)
 &:=\sum_{\substack{j,p,\ell\\0\leq\alpha<1}}(-1)^j
 \dim\Gr_F^p\Gr_{p+\ell}^W
 H^j(U;\mathbb Q)_{\exp(2\pi i\alpha)}\,
 \mathbf e_\alpha u^pv^\ell.
 \label{eq:ordinary-equivariant-hodge-polynomial}
\end{align}
The realization is then characterized on varieties by
\begin{equation}
 \chi_{\Hdg}^{\widehat\mu}([U])=E_c(U;u,v).
 \label{eq:hodge-realization-compact-support}
\end{equation}
If $U$ is smooth of pure dimension $d$, equivariant Poincar\'e duality
gives
\begin{equation}
 E_c(U;u,v)
 =(uv)^dE_{\mathrm{ord}}(U;u^{-1},v^{-1})^\vee.
 \label{eq:ordinary-compact-hodge-duality}
\end{equation}
When the action is trivial, both polynomials are supported on the unit
$\mathbf e_0$; in scalar formulas we identify this summand with the
ordinary coefficient ring and suppress $\mathbf e_0$.  In particular,
$\chi_{\Hdg}^{\widehat\mu}(\Lef)=uv\mathbf e_0$.

Put
\[
 \mathscr R_x
 :=\Z[u^{\Q},v^{\pm1},x^{\pm1}]
 =\bigcup_{m\geq1}
   \Z[u^{\pm1/m!},v^{\pm1},x^{\pm1}],
 \qquad
 \mathscr R:=\Z[u^{\Q},v^{\pm1}].
\]
We use two additive scalarization maps.  For $0\leq\alpha<1$, they
are characterized on monomials by
\begin{align}
 \operatorname{scal}_{\Phi}
 (\mathbf e_{-\alpha}u^pv^\ell x^k)
 &=u^{p+\alpha}v^\ell x^k,
 \label{eq:vanishing-scalarization}\\
 \operatorname{scal}_{\Psi}
 (\mathbf e_{\alpha}u^pv^\ell x^k)
 &=u^{p-\alpha}v^\ell x^k.
 \label{eq:nearby-scalarization}
\end{align}
Each character has a unique presentation of the indicated form with
$0\leq\alpha<1$, so these are well-defined injective homomorphisms of
$\Z[u^{\pm1},v^{\pm1},x^{\pm1}]$-modules from the corresponding
$\Lambda_{\widehat\mu}$-valued Laurent polynomial ring to
$\mathscr R_x$.  They are not ring homomorphisms.  In particular, all
multiplicative constructions below are performed before
scalarization.

The subscripts $\Phi$ and $\Psi$ refer respectively to the relative
vanishing and nearby monodromy conventions.  The two choices reflect
the opposite monodromy conventions for the relative vanishing object
and the compactly supported nearby object; their geometric loop
orientation is fixed in
\eqref{eq:affine-and-limit-monodromy-conventions}.  They are related by
\begin{equation}
 \operatorname{scal}_{\Phi}
 \bigl(A(u^{-1},v^{-1},x^{-1})^\vee\bigr)
 =\operatorname{scal}_{\Psi}(A)
   (u^{-1},v^{-1},x^{-1}).
 \label{eq:scalarization-duality}
\end{equation}

\section{Motivic nearby cycles at infinity}
\label{sec:nearby-at-infinity}
\label{sec:motivic-nearby}

\subsection{The open-pair nearby cycle}
\label{subsec:open-pair-nearby}

We recall the resolution formula in the form used throughout the paper.
Let $Y$ be smooth, let $V\subset Y$ be a dense open subset, and let
$g:Y\to\A^1$ be regular with $g^{-1}(0)$ nowhere dense.  Choose a log
resolution of
\[
 \bigl(Y,(Y\setminus V)\cup g^{-1}(0)\bigr).
\]
Thus we have a proper birational morphism $h:\widetilde Y\to Y$ that
is an isomorphism over
$Y\setminus\bigl((Y\setminus V)\cup g^{-1}(0)\bigr)$, with
$\widetilde Y$ smooth and
\[
 h^{-1}\bigl((Y\setminus V)\cup g^{-1}(0)\bigr)_{\mathrm{red}}
\]
a simple normal crossings divisor.  Such a resolution exists in
characteristic zero.  Write $E_i$, $i\in A$, for its irreducible
components and put
\[
 m_i=\operatorname{ord}_{E_i}(g\circ h),
 \qquad
 A_+=\{i\in A:m_i>0\}.
\]
For a nonempty subset $I\subset A_+$, define
\[
 E_I^\circ
 =\bigcap_{i\in I}E_i\setminus\bigcup_{j\notin I}E_j,
 \qquad
 m_I=\gcd(m_i:i\in I).
\]
To fix the Kummer-action convention, choose a Zariski open set $W$ on
which the $E_i$, $i\in I$, have equations $y_i=0$, and write
\[
 g\circ h=u_Iv_I^{m_I},
 \qquad
 v_I=\prod_{i\in I}y_i^{m_i/m_I},
\]
with $u_I$ a unit along $E_I^\circ\cap W$.  Over this open set define
\begin{equation}
 \widetilde E_I^\circ|_W
 =\{(p,z)\in(E_I^\circ\cap W)\times\Gm:
       z^{m_I}=u_I(p)^{-1}\},
 \label{eq:local-kummer-cover}
\end{equation}
with $\xi\in\mu_{m_I}$ acting by $z\mapsto\xi z$.  If
$v_I'=a v_I$ on an overlap, then $u_I'=u_Ia^{-m_I}$ and the rule
$z'=az$ glues the local covers.  This gives the canonical finite
\'etale $\mu_{m_I}$-cover
$\widetilde E_I^\circ\to E_I^\circ$ used below; see
\cite[Section~3.3]{denef2001geometry}.

The generalized motivic nearby cycle of the open pair is
\begin{equation}
 \mathcal S_{g;V\subset Y}
 =h_{0!}\sum_{\varnothing\ne I\subset A_+}
 (1-\Lef)^{|I|-1}[\widetilde E_I^\circ]
 \in\mathcal M_{g^{-1}(0)}^{\widehat\mu}.
 \label{eq:open-pair-nearby}
\end{equation}
Here $h_0$ is the morphism from the zero divisor on the resolution to
$g^{-1}(0)$.  The class is independent of the chosen log resolution
\cite[Proposition~3.8]{guibert2006iterated}.  When $V=Y$, it is the
Denef--Loeser motivic nearby fiber \cite{denef1998motivic}.

We shall also use the proper-modification form of this independence.
If $q:Y'\to Y$ is proper, $Y'$ is smooth, and $q$ is an isomorphism
over $V$, then
\begin{equation}
 \mathcal S_{g;V\subset Y}
 =q_{0!}\mathcal S_{g\circ q;V\subset Y'}.
 \label{eq:open-pair-proper-modification}
\end{equation}
Here $q_0$ is the induced morphism between the zero fibres; this is the
proper-modification formula of
\cite[Theorem~3.9]{guibert2006iterated}.  Notice that the components
$E_i$ in \eqref{eq:open-pair-nearby} are the components of the inverse
image of the \emph{actual} closed set
$((Y\setminus V)\cup g^{-1}(0))_{\mathrm{red}}$.  An auxiliary SNC
divisor may be used to choose coordinates or to stratify the
calculation, but its additional components are not inserted into the
definition of the open strata $E_I^\circ$.

\subsection{The motivic nearby cycle at infinity}
\label{subsec:nearby-cycle-at-infinity}

Let $f:U\to\A^1_k$ be regular with $U$ smooth.  Choose an open
immersion of $U$ into a smooth proper variety $Y$ and a morphism
\[
 f_Y:Y\longrightarrow\mathbb P^1_k
\]
extending $f$.  Put
\[
 U^\times=U\setminus f^{-1}(0),
 \qquad
 \Omega_Y=Y\setminus f_Y^{-1}(0),
 \qquad
 g_Y=1/f_Y:\Omega_Y\longrightarrow\mathbb A^1.
\]
If $p_\infty:g_Y^{-1}(0)=f_Y^{-1}(\infty)\to\Spec k$ is the
structure morphism, the motivic nearby cycle at infinity associated
with this compactification is
\begin{equation}
 \mathcal S_{f,\infty}^{(Y,f_Y)}
 :=p_{\infty!}\mathcal S_{g_Y;U^\times\subset\Omega_Y}
 \in\mathcal M_k^{\widehat\mu}.
 \label{eq:proper-compactification-nearby-infinity}
\end{equation}
Raibaut's compactification-independence theorem shows that this class
does not depend on $(Y,f_Y)$ \cite{raibaut2012singularites}.  We denote
its common value by $\mathcal S_{f,\infty}$.
Its direct expression on a smooth proper rational compactification will
be constructed in Section~\ref{sec:rational-compactifications}.  Before
turning to that calculation, we record the Hodge realization of the
compactification-independent class.

\subsection{Equivariant Hodge realization at infinity}
\label{subsec:hodge-realization-infinity}

Let $(U,f)$ be as in
Subsection~\ref{subsec:intro-polynomial-hierarchy}, put $d=\dim U$,
and write $U_t=f^{-1}(t)$ for $|t|\gg0$.  The degree-refined scalar
polynomials were defined in
\eqref{eq:intro-degree-refined-vanishing} and
\eqref{eq:intro-degree-refined-nearby}.

We first fix the geometric loop convention underlying their opposite
eigenvalue signs.  Put $s=t^{-1}$ and, after shrinking a punctured
disc $\Delta^*$ about $s=0$, consider the family of pairs
\begin{equation}
 \left(U\times\Delta^*,\mathscr Y_f^\vee\right),
 \qquad
 \mathscr Y_f^\vee
 :=\{(x,s)\in U\times\Delta^*:sf(x)=1\}.
 \label{eq:degeneration-family-at-infinity}
\end{equation}
The relative cohomology of this family and the compactly supported
cohomology of its hypersurface fibres carry the limit mixed Hodge
structures used below.  Let $T_\Phi$ denote monodromy on the relative
object induced by a positive loop in the $s$-coordinate.  The same
loop is clockwise in the affine $t$-plane.  We write
\begin{equation}
 T_\infty:=T_\Phi^{-1}
 \label{eq:affine-and-limit-monodromy-conventions}
\end{equation}
for the monodromy induced by the counterclockwise large circle
$t(\theta)=R'e^{2\pi i\theta}$.  Thus the $T_\Phi$-eigenvalue in
$E_\Phi^\infty$ is written $e^{-2\pi i\alpha}$, whereas the dual
nearby convention in $E_\Psi^\infty$ is written
$e^{2\pi i\alpha}$.

Motivic Hodge realization
uses only their signed character-valued Euler lifts
\begin{align}
 \widetilde E_{\widehat\mu}^{\Phi,\infty}(U,f;u,v)
 &:={}\sum_{\substack{0\leq\alpha<1\\k,p,\ell}}
 (-1)^k
 \dim\Gr_{F_{\lim}}^p\Gr_{p+\ell}^{W_{\lim}}
 H^k(U^{\an},U_t^{\an};\mathbb Q)_{\exp(-2\pi i\alpha)}
 \mathbf e_{-\alpha}u^pv^\ell,
 \label{eq:equivariant-global-vanishing-E}\\
 \widetilde E_{\widehat\mu}^{\Psi,\infty}(U,f;u,v)
 &:={}\sum_{\substack{0\leq\alpha<1\\j,p,\ell}}
 (-1)^j
 \dim\Gr_{F_{\lim}}^p\Gr_{p+\ell}^{W_{\lim}}
 H_c^j(U_t^{\an};\mathbb Q)_{\exp(2\pi i\alpha)}
 \mathbf e_\alpha u^pv^\ell.
 \label{eq:nearby-E-definition}
\end{align}
Thus the scalar Euler specializations are, without introducing any
two-variable abbreviation,
\begin{align}
 \operatorname{scal}_{\Phi}\!\left(
 \widetilde E_{\widehat\mu}^{\Phi,\infty}(U,f;u,v)\right)
 &=E_{\Phi}^{\infty}(U,f;u,v,-1),
 \label{eq:vanishing-Euler-scalarization}\\
 \operatorname{scal}_{\Psi}\!\left(
 \widetilde E_{\widehat\mu}^{\Psi,\infty}(U,f;u,v)\right)
 &=E_{\Psi}^{\infty}(U,f;u,v,-1).
 \label{eq:nearby-Euler-scalarization}
\end{align}

For a regular function $f:U\to\A^1$ defined over $\C$, Raibaut's
realization theorem
\cite[Theorem~2.10]{raibaut2012singularites} gives
\begin{equation}
 \chi_{\Hdg}^{\widehat\mu}(\mathcal S_{f,\infty})
 =\widetilde E_{\widehat\mu}^{\Psi,\infty}(U,f;u,v).
 \label{eq:motivic-nearby-realization-new}
\end{equation}
The long exact sequence of $(U,U_t)$ and Poincar\'e duality on the
smooth $(d-1)$-fold $U_t$ give, after Euler specialization,
\begin{equation}
 \widetilde E_{\widehat\mu}^{\Phi,\infty}(U,f;u,v)
 =E_{\mathrm{ord}}(U;u,v)
 -(uv)^{d-1}
 \widetilde E_{\widehat\mu}^{\Psi,\infty}
 (U,f;u^{-1},v^{-1})^\vee.
 \label{eq:equivariant-nearby-to-vanishing-new}
\end{equation}
Applying $\operatorname{scal}_{\Phi}$ recovers
\eqref{eq:intro-nearby-vanishing-relation}.  In general this is only
an Euler-level identity.  In the nondegenerate Laurent case,
middle-degree concentration and affine vanishing allow the full
degree-refined nearby polynomial to be reconstructed; see
Corollary~\ref{cor:full-nearby-newton-formula}.

\section{Direct computation on rational compactifications}
\label{sec:rational-compactifications}

\subsection{Direct classes on rational compactifications}
\label{subsec:rational-models}

The proper-morphism definition of $\mathcal S_{f,\infty}$ is
compactification-independent.  For direct calculations, especially in
the toric setting, it is preferable not to resolve the associated
rational map at the outset.  Let $i:U\hookrightarrow X$ be a smooth
proper compactification.  With the convention of
Subsection~\ref{subsec:intro-overview}, $f$ is nonzero on every connected
component, and we view it as an element of $k(X)^\times$.
Write
\[
 \operatorname{div}_X(f)=D_0-D_\infty,
 \qquad
 \Omega_X=X\setminus|D_0|,
 \qquad
 U^\times=U\setminus f^{-1}(0).
\]
The reciprocal
\[
 g_X=1/f
\]
is regular on $\Omega_X$.  The open-pair construction of
Subsection~\ref{subsec:open-pair-nearby} gives the direct rational
class
\begin{equation}
 \mathcal S_{f,\infty}^{X,\mathrm{rat}}
 :=p_!\mathcal S_{g_X;U^\times\subset\Omega_X}
 \in\mathcal M_k^{\widehat\mu},
 \label{eq:direct-rational-class-new}
\end{equation}
where $p:g_X^{-1}(0)\to\Spec k$ is the structure morphism.  Neither
$g_X$ nor $p$ is required to be proper.  

Because $f$ is regular on $U$, one has
\[
 U^\times=U\cap\Omega_X,
 \qquad
 g_X^{-1}(0)\subset\Omega_X\setminus U^\times.
\]
Choose a log resolution
\[
 h:\widetilde\Omega\longrightarrow\Omega_X
\]
of the open pair $U^\times\subset\Omega_X$.  Thus $h$ is an
isomorphism over $U^\times$ and
\[
 E=h^{-1}(\Omega_X\setminus U^\times)_{\mathrm{red}}
   =\bigcup_{i\in A}E_i
\]
is a simple normal crossings divisor.  Such a resolution exists in
characteristic zero.  Put
\[
 m_i=\operatorname{ord}_{E_i}(g_X\circ h),
 \qquad
 A_+=\{i\in A:m_i>0\},
\]
and, for nonempty $I\subset A_+$, set
\[
 E_I^\circ
 =\bigcap_{i\in I}E_i
  \setminus\bigcup_{j\in A\setminus I}E_j.
\]
Applying the open-pair formula of
Section~\ref{sec:nearby-at-infinity} gives
\begin{equation}
 \mathcal S_{f,\infty}^{X,\mathrm{rat}}
 =p_!h_{0!}
  \sum_{\varnothing\ne I\subset A_+}
  (1-\Lef)^{|I|-1}[\widetilde E_I^\circ].
 \label{eq:rational-snc-formula}
\end{equation}
Here $h_0$ is the map over the zero fibre and the tildes denote the
Kummer covers determined by the orders $m_i$.  The right-hand side is
independent of $h$ by
\eqref{eq:open-pair-proper-modification}.  Since motivic nearby cycles
are local along $g_X^{-1}(0)$, it is enough to choose the resolution on
a neighbourhood of that fibre.

If $(\Omega_X\setminus U^\times)_{\mathrm{red}}$ is already an SNC
divisor near $g_X^{-1}(0)$, one may take $h$ to be the identity there.
We call this the \emph{adapted SNC case}.

\subsection{Zero--pole cancellation and comparison with Raibaut}
\label{subsec:zero-pole-cancellation}

We now introduce a separate toroidal datum.  Let
\[
 \Delta=\bigcup_{\lambda\in\Lambda}\Delta_\lambda
\]
be a reduced divisor on $X$ containing $X\setminus U$ and
$\operatorname{Supp}\operatorname{div}_X(f)$.  We say that
$(X,\Delta)$ is \emph{SNC at infinity for $f$} if there is a Zariski
open neighbourhood $W$ of $|D_\infty|$ such that $\Delta|_W$ is a
simple normal crossings divisor.  No condition is imposed away from
$W$.  The divisor $\Delta$ is auxiliary: it provides toroidal
coordinates for resolving $X\dashrightarrow\mathbb P^1$, but it does
not enter either the definition \eqref{eq:direct-rational-class-new} or
the formula \eqref{eq:rational-snc-formula}.

\begin{lemma}[Zero--pole blow-up cancellation]
\label{lem:zero-pole-cancellation}
Suppose that $(X,\Delta)$ is SNC at infinity for $f$ and that two
components $\Delta_i$ and $\Delta_j$ satisfy
\[
 \operatorname{ord}_{\Delta_i}(g_X)=a>0,
 \qquad
 \operatorname{ord}_{\Delta_j}(g_X)=-b<0.
\]
Then $\Delta_i\cap\Delta_j$ is a smooth transverse intersection
contained in the SNC neighbourhood $W$.  Let
\[
 \pi:X'=\operatorname{Bl}_{\Delta_i\cap\Delta_j}X\longrightarrow X
\]
and let $\Delta'$ be the reduced total transform of $\Delta$.  Then
$(X',\Delta')$ is SNC at infinity for $f$, and the direct open-pair
classes satisfy
\begin{equation}
 \mathcal S_{f,\infty}^{X',\mathrm{rat}}
 =\mathcal S_{f,\infty}^{X,\mathrm{rat}}.
 \label{eq:zero-pole-direct-invariance}
\end{equation}
\end{lemma}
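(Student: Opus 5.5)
First I establish the geometric claims.  Since $\operatorname{ord}_{\Delta_j}(g_X)<0$, the component $\Delta_j$ lies in $|D_\infty|\subset W$, so $\Delta_i\cap\Delta_j\subset W$.  On $W$ the divisor $\Delta$ is SNC, so $\Delta_i\cap\Delta_j$ is a smooth codimension-two transverse intersection (possibly disconnected or empty; if empty there is nothing to prove).  Blowing up a stratum of an SNC divisor keeps the reduced total transform SNC.  Since $\pi$ is an isomorphism away from $\Delta_i\cap\Delta_j\subset W$, it follows that $\Delta'$ is SNC on $W':=\pi^{-1}(W)$.  The pole divisor of $f$ on $X'$ is the strict transform of $|D_\infty|$ together with the exceptional divisor $F$ when $\operatorname{ord}_F(g_X)=a-b<0$.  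Both lie in $W'$, so $(X',\Delta')$ is SNC at infinity.

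For the equality \eqref{eq:zero-pole-direct-invariance}, the key observation is that $\pi$ is \emph{not} a proper modification of $\Omega_X$, because the center $\Delta_i\cap\Delta_j$ lies in the zero locus $|D_0|$.  Therefore \eqref{eq:open-pair-proper-modification} cannot be applied directly.  Instead I compare the two sides stratum by stratum.  Away from the center the two open pairs coincide, so by locality of \eqref{eq:rational-snc-formula} along $g^{-1}(0)$ the difference is supported over $Z:=\Delta_i\cap\Delta_j$.  On the $X$ side, $Z\subset|D_0|$ has been removed from $\Omega_X$, so it contributes nothing; more precisely, a neighbourhood of $Z$ meets $g_X^{-1}(0)$ only through strata of $\Omega_X$ not lying over $Z$.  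On the $X'$ side, $F\cong\mathbb P(N_Z)$ is a $\mathbb P^1$-bundle over $Z$ with $\operatorname{ord}_F(g)=a-b$.  The strict transform $\Delta_i'$ (order $a>0$) is removed from $\Omega_{X'}$, and it meets $F$ along a section $\sigma_i$.  The strict transform $\Delta_j'$ (order $-b$) meets $F$ along a section $\sigma_j$.  Other components of $\Delta$ through $Z$ cut $F$ in preimages of their traces on $Z$.

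The local computation then splits into cases according to the sign of $a-b$.  If $a\ge b$, then $F\subset|D_0'|$ or $g$ is a unit on $F$, so $F$ is removed or carries no positive order.  The new points of $g^{-1}(0)$ lying over $Z$ then lie in $\sigma_j$, which belongs to $|D_0'|$ and is also removed.  The contribution over $Z$ is therefore empty and equality holds.  If $a<b$, then $F$ is a pole component of $f$, that is, a component of positive order $b-a$ for $g$.  Over each point of a stratum $Z^\circ$ of $Z$ (stratified by the other $\Delta$-components and the actual open-pair boundary), the fibre of $F\setminus(\sigma_i\cup\sigma_j)$ is $\Gm$.  Two strata of the formula \eqref{eq:rational-snc-formula} appear:
\begin{itemize}
\item $I=\{F\}\cup I_0$, over $F^\circ\cong Z^\circ\times\Gm$, with multiplicity $\gcd(b-a,m_{I_0})$;
\item $I=\{F,\Delta_j'\}\cup I_0$, over $\sigma_j^\circ\cong Z^\circ$, with multiplicity $\gcd(b-a,b,m_{I_0})$ and extra factor $(1-\Lef)$.
\end{itemize}
Here $I_0$ collects the positive-order components through the stratum.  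The point $\sigma_i$ is excluded because it lies in $|D_0'|$.  In the local coordinates $g=x^a y^{-b}w$, with $F$ given in the chart $y=x\eta$ or $x=y\xi$, the Kummer cover of the first stratum is the $\Gm$-bundle $\{z^{m}=\xi^{-a}\cdot(\text{unit})\}$.  This is an equivariant $\Gm$-torsor-type cover, and its class equals $(\Lef-1)$ times the Kummer cover of the second stratum.  The exact form of this identity is the standard computation of the class of $\{(\xi,z):z^{m}=\xi^{-a}c\}$ with $\mu_m$-action, which is isomorphic to $\Gm\times$(cover of gcd) via the affine-lifting/equivariant torus relation, as in Guibert--Loeser--Merle.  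Thus the two contributions sum to $(\Lef-1)[\cdot]+(1-\Lef)[\cdot]=0$.  Globalizing over $Z^\circ$ uses the gluing of covers from \eqref{eq:local-kummer-cover}, and summing over strata $Z^\circ$ by the scissor relation gives \eqref{eq:zero-pole-direct-invariance}.

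The main obstacle will be this equivariant identification: showing that the Kummer cover over $F^\circ$ is, in $\mathcal M^{\widehat\mu}$, exactly $(\Lef-1)$ times the cover over $\sigma_j^\circ$ with the correct gcd and monodromy.  I would first treat it fibrewise as the class of $\{z^{m}=\xi^{c}\}\subset\Gm^2$, which is a torus, and then invoke the affine-lifting relation to replace the nontrivial $\mu$-action on the $\Gm$ factor by the trivial one.  A second point needing care is the bookkeeping, via the remark after \eqref{eq:open-pair-proper-modification}, of which auxiliary components are actual boundary components; this only subdivides $Z$ into strata $Z^\circ$ and does not affect the cancellation.
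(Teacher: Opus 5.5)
Your architecture is the paper's: localize over $Z=\Delta_i\cap\Delta_j$, note that $\pi$ is not a proper modification of $\Omega_X$ because $Z\subset|D_0|$, find two new strata over each piece of $Z$, show that one Kummer class is $(\Lef-1)$ times the other, and cancel against $(1-\Lef)^{|I|}$ and $(1-\Lef)^{|I|+1}$. However, your signs are inverted throughout, and this reverses the case analysis, which is the substance of the lemma.

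Since $g_X=1/f$ and $\Omega_X=X\setminus|D_0|$, a component of positive $g_X$-order is a pole of $f$. It lies in $g_X^{-1}(0)$ and is \emph{kept}. A component of negative order is a zero of $f$ and is \emph{deleted}. This has three consequences:
\begin{itemize}
\item $\Delta_i\subset|D_\infty|$, and this (not $\Delta_j$) is why $Z\subset W$.
\item $\Delta_j'$ and $\sigma_j=F\cap\Delta_j'$ are deleted, while $\Delta_i'$ and $\sigma_i$ are not.
\item $F$ is a pole component of $f$ exactly when $a-b>0$.
\end{itemize}

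Hence the case $a<b$ is the trivial one. There $F\subset|D_0'|$, so $\Omega_{X'}=\pi^{-1}(\Omega_X)\simeq\Omega_X$ and nothing needs computing. Your cancellation in that case is performed on strata that are not contained in $\Omega_{X'}$.

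Conversely, for $a>b$ your claim that $F$ is removed is false. $F$ has positive order $a-b$, and two genuinely new strata appear:
\begin{itemize}
\item $F^\circ=F\setminus(\sigma_i\cup\sigma_j)$, with multiplicity $c=\gcd(\{a-b\}\cup\{m_s\}_{s\in I_0})$;
\item $\sigma_i^\circ$, with multiplicity $e=\gcd(c,a)$.
\end{itemize}
So, as written, the only case with content is dismissed by an incorrect argument. For $a=b$ your conclusion is right, but the reason is that $F$ is an order-zero boundary component. It is not that the new zeros of $g_X$ lie on $\sigma_j$; they lie on $\sigma_i$.

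The repair is to swap roles. Run your computation for $a>b$, using $a-b$, $a$ and $\sigma_i$ in place of your $b-a$, $b$ and $\sigma_j$. Your own formula $z^m=\xi^{-a}\cdot(\text{unit})$ in the chart $x=y\xi$ is already the correct one for this case. The section visible in that chart is $\sigma_i=\{\xi=0\}$; $\sigma_j$ does not appear there at all.

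For the key identity, a fibrewise torus computation does not by itself give an equality in $\mathcal M_k^{\widehat\mu}$. The paper globalizes it as follows.
\begin{itemize}
\item Write $a=ea'$ and $c=ec'$, and choose integers with $pa'+qc'=1$.
\item On the cover $\{\lambda^c=(w\xi^a)^{-1}\}$ of $F^\circ$, make the unimodular change $\eta=\lambda^{c'}\xi^{a'}$, $\theta=\lambda^{-p}\xi^{q}$.
\item This gives $\eta^e=w^{-1}$, which is the cover of $\sigma_i^\circ$, and $\zeta\in\mu_c$ acts by $(\eta,\theta)\mapsto(\zeta^{c'}\eta,\zeta^{-p}\theta)$.
\end{itemize}
The $\theta$-coordinates glue. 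So the cover of $F^\circ$ is the complement of the zero section of an equivariant line bundle over the cover of $\sigma_i^\circ$, with linear fibre action. The affine-lifting and scissor relations then give the factor $\Lef-1$. With these corrections your argument coincides with the paper's.
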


\begin{proof}
Let $E_0$ be the exceptional divisor and denote strict transforms by a
prime.  Since the center is an SNC stratum in $W$, the reduced total
transform $\Delta'$ is SNC on $\pi^{-1}(W)$.  In local coordinates
adapted to $\Delta$ one may write
\begin{equation}
 g_X=u x^ay^{-b}\prod_{s\in I}z_s^{m_s},
 \qquad
 \Delta_i=\{x=0\},\quad \Delta_j=\{y=0\},
 \label{eq:zero-pole-local-equation}
\end{equation}
where $u$ is a unit and the $z_s=0$ are the further positive-order
components of $\Delta$ through the stratum under consideration.  In
the blow-up chart $x=\beta y$, equation
\eqref{eq:zero-pole-local-equation} becomes
\begin{equation}
 g_X\circ\pi
 =u\beta^a y^{a-b}\prod_{s\in I}z_s^{m_s}.
 \label{eq:zero-pole-blowup-chart}
\end{equation}
It follows that
\[
 \operatorname{ord}_{E_0}(g_X\circ\pi)=a-b.
\]

Put $\Omega^-:=\pi^{-1}(\Omega_X)$.  The center is contained in
$|D_0|=X\setminus\Omega_X$, so
$\Omega^-\to\Omega_X$ is an isomorphism which identifies the open
subsets $U^\times$ and the functions $g_X$.  On the other hand,
$\Omega^-\subset\Omega_{X'}$, and the difference is supported on
$E_0$.  Thus it remains to compare the two open-pair classes across
the exceptional divisor.

Choose a log resolution of the open-pair boundary on $\Omega_{X'}$ which
is compatible with $\Delta'$.  Near $E_0$ this boundary is already SNC,
being a union of components of $\Delta'$, so the resolution may be chosen
to be the identity there and continued only away from that neighbourhood.
Its restriction over $\Omega^-$,
followed by the preceding isomorphism $\Omega^-\simeq\Omega_X$, is a
compatible resolution of the original open pair; after a further common
refinement away from $E_0$, if necessary, formula
\eqref{eq:open-pair-proper-modification} permits both classes to be
calculated on this single model.  Stratify the inverse image of the
center so that its incidences with both the resolved open-pair boundary
and the total transform of $\Delta'$ are constant.  Only components of
the former enter Formula~\eqref{eq:rational-snc-formula}; additional
components coming from $\Delta'$ merely subdivide its strata, and the
pieces recombine by the scissor relation.  We are therefore reduced on
each relevant piece to the monomial chart
\eqref{eq:zero-pole-local-equation}, where $I$ records the further
positive-order components of the resolved open-pair boundary.  After
pullback, the local equation retains the same monomial form; from now
on we enlarge $I$ accordingly and suppress strict transforms from the
notation.

If $a<b$, then $E_0$ belongs to the zero divisor of $f$ and is absent
from $\Omega_{X'}$, so $\Omega_{X'}=\Omega^-$ locally over the center.
If $a=b$, then $E_0$ is a component of order zero in the boundary of
$U^\times\subset\Omega_{X'}$.  It creates no indexed stratum, and its
intersections are removed from every positive-order stratum.  Hence it
again creates no new contribution.

Assume from now on that $a>b$.  Points of the center lying on a
further nonpositive-order component of the resolved boundary are
excluded from every contributing open stratum.  Fix a locally closed
piece $B$ over the remaining part of $\Delta_i\cap\Delta_j$, with $I$
the set of further positive-order boundary components through $B$, and
with all incidences with the auxiliary divisor fixed.
Over such a piece, exactly two new open strata enter
\eqref{eq:rational-snc-formula}:
\[
 B_0:=F_{I\cup\{0\}}^\circ,
 \qquad
 B_{0i}:=F_{I\cup\{0,i\}}^\circ.
\]
Here $F_K^\circ$ denotes the open stratum of the resolved open-pair
boundary indexed by $K$; the index $0$ denotes the exceptional
component and $i$ the strict transform of $\Delta_i$.  The first
stratum is the exceptional $\mathbb P^1$-bundle with the two sections
corresponding to $\Delta_i'$ and $\Delta_j'$ removed, while the second
is the section corresponding to $\Delta_i'$.  The section corresponding
to $\Delta_j'$ is outside
$\Omega_{X'}$.

Put
\begin{equation}
 c=\gcd\bigl(\{m_s:s\in I\}\cup\{a-b\}\bigr),
 \qquad
 e=\gcd(c,a).
 \label{eq:zero-pole-gcds}
\end{equation}
Let $\widetilde B_0$ and $\widetilde B_{0i}$ denote the Kummer covers
of $B_0$ and $B_{0i}$, respectively.  In the chart
\eqref{eq:zero-pole-blowup-chart}, the coordinate $\beta$ belongs to
$\Gm$ on $B_0$, and the unit remaining after the positive monomial
has been removed is $u\beta^a$.  Thus $\widetilde B_0$ is locally
\begin{equation}
 D_c
 =\{(\lambda,p,\beta)\in\Gm\times B\times\Gm:
      \lambda^c=(u(p)\beta^a)^{-1}\},
 \label{eq:zero-pole-open-kummer}
\end{equation}
where $\xi\in\mu_c$ acts by
\[
 \xi\cdot(\lambda,p,\beta)=(\xi\lambda,p,\beta).
\]
Along $B_{0i}$, the functions $\beta,y$, and $z_s$ are local
equations of the positive components with orders $a$, $a-b$, and
$m_s$, respectively.  After these positive factors are removed, the
remaining unit is $u$.  Its Kummer cover is therefore locally
\begin{equation}
 B^{(e)}
 =\{(\eta,p)\in\Gm\times B:\eta^e=u(p)^{-1}\},
 \label{eq:zero-pole-section-kummer}
\end{equation}
with its natural $\mu_e$-action.

Write $a=ea'$ and $c=ec'$.  Then
$\gcd(a',c')=1$, so choose $s,t\in\Z$ such that
\[
 sa'+tc'=1.
\]
On \eqref{eq:zero-pole-open-kummer} set
\begin{equation}
 \eta=\lambda^{c'}\beta^{a'},
 \qquad
 \theta=\lambda^{-s}\beta^t.
 \label{eq:zero-pole-monomial-change}
\end{equation}
The defining equation gives
\[
 \eta^e=\lambda^c\beta^a=u(p)^{-1}.
\]
Moreover, the exponent matrix of
\eqref{eq:zero-pole-monomial-change} has determinant
$c't+a's=1$.  Hence this is an invertible monomial change of
coordinates, with inverse
\[
 \lambda=\eta^t\theta^{-a'},
 \qquad
 \beta=\eta^s\theta^{c'}.
\]
The $\mu_c$-action becomes
\begin{equation}
 \xi\cdot(\eta,\theta)
 =\bigl(\xi^{c'}\eta,\xi^{-s}\theta\bigr).
 \label{eq:zero-pole-transformed-action}
\end{equation}
Thus the action on the first coordinate factors through
$\mu_c\to\mu_e$, $\xi\mapsto\xi^{c'}$, and the action on the
$\theta$-coordinate is linear.

The preceding monomial description is compatible with changes of
coordinates in \eqref{eq:zero-pole-local-equation}.  On overlaps, the
$\theta$-coordinates differ by invertible transition functions.
Consequently, the local products $B^{(e)}\times\Gm$ glue to the
complement of the zero section in an equivariant line bundle over
$\widetilde B_{0i}$.  The equivariant affine-bundle relation and the
scissor relation now give
\begin{equation}
 [\widetilde B_0]
 =(\Lef-1)[\widetilde B_{0i}]
 \quad\text{in }\mathcal M_k^{\widehat\mu}.
 \label{eq:zero-pole-kummer-class-relation}
\end{equation}

The index sets of $B_0$ and $B_{0i}$ have cardinalities $|I|+1$ and
$|I|+2$.  Their total contribution is therefore
\begin{align*}
 &(1-\Lef)^{|I|}[\widetilde B_0]
 +(1-\Lef)^{|I|+1}[\widetilde B_{0i}]\\
 &\qquad
 =(1-\Lef)^{|I|}
 \bigl((\Lef-1)+(1-\Lef)\bigr)
 [\widetilde B_{0i}]
 =0.
\end{align*}
Summing over the pieces $B$ and then recombining the auxiliary
subdivision proves \eqref{eq:zero-pole-direct-invariance}.  The entire
modification takes place over $W$; the reduced total transform is SNC
there, and every new positive-order component belongs to it.  Hence
$(X',\Delta')$ remains SNC at infinity.
\end{proof}

\begin{proposition}[Direct computation and compactification independence]
\label{prop:rational-comparison}
Let $X$ be a smooth proper compactification of $U$, and suppose that
there is a reduced divisor $\Delta$ containing $X\setminus U$ and
$\operatorname{Supp}\operatorname{div}_X(f)$ for which
$(X,\Delta)$ is SNC at infinity for $f$.  Then
\[
 \mathcal S_{f,\infty}^{X,\mathrm{rat}}
 =\mathcal S_{f,\infty}.
\]
Consequently the left-hand side is independent of the chosen smooth
proper rational compactification satisfying this local SNC hypothesis,
and of the auxiliary divisor $\Delta$.
\end{proposition}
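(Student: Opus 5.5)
The plan is to resolve the rational map $X\dashrightarrow\mathbb P^1$ defined by $f$ using only zero--pole blow-ups, as in Lemma~\ref{lem:zero-pole-cancellation}. Once the map is a morphism, the direct class coincides with the proper-compactification class \eqref{eq:proper-compactification-nearby-infinity}. Raibaut's independence theorem then identifies that class with $\mathcal S_{f,\infty}$.

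\textbf{Indeterminacy lies only on zero--pole intersections.} Work on the SNC neighbourhood $W$ of $|D_\infty|$. At a point of $W$, choose local coordinates adapted to $\Delta$ and write $g_X=u\prod x_\lambda^{n_\lambda}$. Here $u$ is a unit and $n_\lambda=\operatorname{ord}_{\Delta_\lambda}(g_X)$, since $\Delta$ contains $\operatorname{Supp}\operatorname{div}_X(f)$. The map $X\dashrightarrow\mathbb P^1$ is regular at such a point exactly when the components through it do not include both a positive-order and a negative-order component. The case of zero-order components through the point is harmless: a monomial with exponents of a single sign defines a morphism to $\mathbb P^1$. Away from $W$ the function $g_X$ has no poles, so $f=1/g_X$ is regular there with values in $\mathbb A^1\subset\mathbb P^1$. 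Hence the indeterminacy locus is the union of the intersections $\Delta_i\cap\Delta_j$ with $\operatorname{ord}_{\Delta_i}(g_X)>0>\operatorname{ord}_{\Delta_j}(g_X)$, all of which lie in $W$.

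\textbf{Termination of zero--pole blow-ups.} Let $\mathcal P$ be the set of pairs $(i,j)$ such that $\Delta_i\cap\Delta_j\neq\varnothing$, $a=\operatorname{ord}_{\Delta_i}(g_X)>0$ and $-b=\operatorname{ord}_{\Delta_j}(g_X)<0$. If $\mathcal P$ is nonempty, blow up one such intersection. Lemma~\ref{lem:zero-pole-cancellation} gives three facts: the new pair $(X',\Delta')$ is again SNC at infinity, the direct class is unchanged, and the exceptional divisor has order $a-b$. I need an invariant that strictly decreases at each step. I would use the toroidal/Euclidean-algorithm measure $\sum_{(i,j)\in\mathcal P}\min(a_{ij},b_{ij})$, counted over strata; alternatively, I would argue locally in the cone of the SNC stratum, where each blow-up is a stellar subdivision and the process is the standard regular resolution of the sign-changing monomial.

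Several features of a blow-up along $\Delta_i\cap\Delta_j$ make the count decrease.
\begin{itemize}
\item The strict transforms of $\Delta_i$ and $\Delta_j$ become disjoint over the centre.
\item The exceptional divisor meets only one of them with opposite sign, namely $\Delta_j'$ if $a>b$ and $\Delta_i'$ if $a<b$.
\item That new pair has smaller order data, of the form $(a-b,b)$ or $(a,b-a)$.
\item If $a=b$, the exceptional divisor has order $0$ and separates them.
\end{itemize}
Other opposite-sign pairs passing through the centre must also be tracked, for instance a third component $\Delta_k$ of positive order meeting $\Delta_j$. For these I would rely on the toric picture: locally the fan is the positive orthant and the function is a linear form $n$ on it. Indeterminacy means some cone has rays on which $n$ takes both signs. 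Stellar subdivision at the midpoint of the two-dimensional face spanned by $e_i$ and $e_j$ is the standard step, and the hyperplane $\{n=0\}$ is reached after finitely many such steps by the Euclidean algorithm on pairs. This termination bookkeeping is the main obstacle. I would first check it on a single two-dimensional cone and then argue that the steps on distinct faces commute, because they happen in disjoint or transverse loci. Since $X$ is proper, only finitely many strata occur, so the whole process is finite.

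\textbf{Conclusion.} After finitely many steps we reach $X^{(N)}$ on which $f$ extends to a morphism $f_Y:X^{(N)}\to\mathbb P^1$. By the lemma,
\[
\mathcal S^{X,\mathrm{rat}}_{f,\infty}=\mathcal S^{X^{(N)},\mathrm{rat}}_{f,\infty}.
\]
On $X^{(N)}$ one has $\Omega_{X^{(N)}}=X^{(N)}\setminus f_Y^{-1}(0)$ and $g_{X^{(N)}}=1/f_Y$. Hence definition \eqref{eq:direct-rational-class-new} coincides literally with \eqref{eq:proper-compactification-nearby-infinity}, and Raibaut's theorem gives $\mathcal S_{f,\infty}$. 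Independence of $X$ and of $\Delta$ follows immediately, because the right-hand side involves neither.
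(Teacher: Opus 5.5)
Your overall route is the paper's. You blow up intersections of a positive-order and a negative-order component of $g_X$, apply Lemma~\ref{lem:zero-pole-cancellation} at each step, and on the resolved model read \eqref{eq:direct-rational-class-new} literally as \eqref{eq:proper-compactification-nearby-infinity}.

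The step that does not work as written is termination. Your invariant $\sum_{(i,j)\in\mathcal P}\min(a_{ij},b_{ij})$ need not decrease:
\begin{itemize}
\item for a single pair, $(a,b)=(3,1)\mapsto(2,1)$ leaves it equal to $1$;
\item if a further component $\Delta_k$ with $\operatorname{ord}_{\Delta_k}(g_X)=-5$ passes through $\Delta_i\cap\Delta_j$, the pair $(i,k)$ survives the blow-up and a new mixed pair $(0,k)$ appears, so the sum goes from $1+3$ to $1+3+2$.
\end{itemize}
The same example refutes your fallback that blow-ups on distinct two-faces commute. The centres $\Delta_i\cap\Delta_j$ and $\Delta_i\cap\Delta_k$ meet along the triple stratum, and blowing up the first creates a mixed pair with $\Delta_k$ that was not there before. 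So checking a single two-dimensional cone does not suffice. The paper treats this step only in toroidal language, as the Euclidean algorithm on mixed two-cones performed ``compatibly'', so an explicit decreasing invariant is worth supplying.

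A clean repair is to always blow up an intersecting mixed pair $(i,j)$ whose value $a+b$ is maximal among all intersecting mixed pairs.
\begin{itemize}
\item If $a>b$, the new mixed pairs are $(0,j)$, of value $a$, and $(0,k)$ for each negative-order $\Delta_k$ meeting $\Delta_i\cap\Delta_j$. Since $(i,k)$ is then itself a mixed pair, maximality gives $b_k\le b$. So $(0,k)$ has value $(a-b)+b_k\le a<a+b$.
\item The case $a<b$ is symmetric, with all new values at most $b$.
\item If $a=b$, no new mixed pair is created.
\end{itemize}
Every other mixed pair survives with unchanged orders. Its positive component lies in $|D_\infty|\subset W$, and by the SNC condition there $\Delta_k\cap\Delta_l\not\subset\Delta_i\cap\Delta_j$ for $\{k,l\}\neq\{i,j\}$, so the strict transforms still meet. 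Meanwhile $(i,j)$ disappears.

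Hence the pair (maximal value, number of pairs attaining it) decreases lexicographically. The process stops exactly when no positive-order component meets a negative-order one, which by your first step means the map is a morphism.

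Two small points remain. In your first step you mean that $f$, not $g_X$, has no poles off $W$. You should also note, as the paper does, that every centre lies in a pole component of $f$ and hence outside $U$. So the final model is still a compactification of $U$ on which the morphism extends $f$.
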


\begin{proof}
We first resolve the rational map toroidally near its indeterminacy
locus.  Every indeterminacy point lies in
\[
 |D_0|\cap|D_\infty|\subset W,
\]
where the boundary is SNC.  In an \'etale coordinate chart in $W$
adapted to $\Delta$, one has
\[
 g_X=u\prod_{i=1}^s x_i^{m_i},
\]
where $u$ is a unit.  A cone of the corresponding toroidal cone complex
maps to a cone of the fan of $\mathbb P^1$ precisely when the orders on
all its rays have the same weak sign.  Suppose that two rays spanning a
mixed-sign two-cone have orders
\[
 m_i=a>0,
 \qquad
 m_j=-b<0.
\]
Blowing up $\Delta_i\cap\Delta_j$ is the star subdivision obtained by
inserting the sum of these rays, and the new exceptional divisor has
order $a-b$.  If $a>b$, the remaining mixed-sign two-cone has orders
$a-b$ and $-b$; if $a<b$, it has orders $a$ and $-(b-a)$; and if
$a=b$, the new ray has order zero and separates the two signs.
Repetition is thus the subtractive Euclidean algorithm and terminates
after finitely many steps.

Because this cone complex is simplicial, every mixed-sign cone contains
a mixed-sign two-face.  Performing the preceding continued-fraction
subdivisions compatibly on the finite part of the toroidal cone complex
meeting $|D_\infty|$ gives a sequence
\[
 \widetilde X\longrightarrow X
\]
of blow-ups of transverse intersections of opposite-order components
for which the induced rational map is everywhere regular.  The reduced
total transform of $\Delta$ remains SNC on the neighbourhood containing
the positive-order locus.  At every step,
Lemma~\ref{lem:zero-pole-cancellation} gives equality of the direct
open-pair classes; hence
\[
 \mathcal S_{f,\infty}^{X,\mathrm{rat}}
 =
 \mathcal S_{f,\infty}^{\widetilde X,\mathrm{rat}}.
\]

The resulting morphism
\[
 \widetilde f:\widetilde X\longrightarrow\mathbb P^1
\]
is proper because $\widetilde X$ is proper and $\mathbb P^1$ is
separated.  Each blow-up center is contained in a positive-order
divisor and hence is disjoint from $U$, so $\widetilde X$ is still a
compactification of $U$ and $\widetilde f$ extends $f$.  On this
resolved model, the
open-pair formula is precisely
the nearby-cycle formula at infinity associated with a proper extension
of $f$.  Raibaut's compactification-independence theorem therefore gives
\[
 \mathcal S_{f,\infty}^{\widetilde X,\mathrm{rat}}
 =\mathcal S_{f,\infty}.
\]
Combining the two equalities proves the first assertion.  Applying the
same argument to any two admissible rational compactifications proves
the compactification-independence statement.  Finally, the auxiliary
divisor $\Delta$ does not occur in the definition
\eqref{eq:direct-rational-class-new}; it is used only to construct the
toroidal sequence.  This also proves independence of its choice.
\end{proof}

\begin{remark}
The local SNC hypothesis is deliberately imposed only near
$|D_\infty|$.  Singularities of $f^{-1}(0)$ contained in $U$, and
singularities of its closure away from the polar locus, are removed from
$\Omega_X$ and do not enter the open-pair nearby cycle.  Properness is
still global: it ensures that the resolved model is a proper extension
to $\mathbb P^1$ and hence can be compared with Raibaut's invariant.
\end{remark}

\section{The Newton-face formula at infinity}
\label{sec:newton-face}

\subsection{Laurent polynomials and Newton polytopes at infinity}
\label{subsec:conventions-newton}

We now specialize the general comparison theorem to Laurent
polynomials on algebraic tori.  
Let $M$ be a lattice, let $N=M^\vee$, and write
\[
 T_M=\Spec k[N],\qquad d=\rank M.
\]
A Laurent polynomial on $T_M$ is a finite sum
\[
 f=\sum_{n\in N}a_n\chi^n\in k[N].
\]
For $f\ne0$, its support and Newton polytope at infinity are
\[
 \Supp(f)=\{n\in N:a_n\ne0\},
 \qquad
 P_{f,\infty}=\Conv\bigl(\{0\}\cup\Supp(f)\bigr)\subset N_\R.
\]
Put $r=\dim P_{f,\infty}$.  

\subsection{The toric compactification and nondegeneracy}
\label{subsec:toric-pole-faces}

For a face $F\preceq P_{f,\infty}$, write
\[
 f_F=\sum_{n\in F\cap\Supp(f)}a_n\chi^n
\]
for the corresponding face polynomial, and let
\[
 \sigma_F
 =\{m\in M_\R:\langle m,-\rangle|_{P_{f,\infty}}
       \text{ attains its minimum on }F\}
\]
be its inner normal cone.  Assume first that $r=d$.  These cones form
the complete inner normal fan $\mathcal N(P_{f,\infty})$.  Choose a
smooth complete refinement $\Sigma$ of $\mathcal N(P_{f,\infty})$.
The associated
toric variety $X_\Sigma$ is a smooth proper compactification of $T_M$.
The orbit--cone correspondence gives
\[
 X_\Sigma=\bigsqcup_{\tau\in\Sigma}O_\tau,
 \qquad
 O_\tau\simeq\Gm^{d-\dim\tau}.
\]
For a ray $\rho\in\Sigma(1)$, let $m_\rho\in M$ be its primitive
generator and let $D_\rho=\overline{O_\rho}$.  Then
\[
 X_\Sigma\setminus T_M=\bigcup_{\rho\in\Sigma(1)}D_\rho.
\]
The order of the rational function $f$ along a boundary divisor is
\begin{equation}
 \operatorname{ord}_{D_\rho}(f)
 =\min_{n\in\Supp(f)}\langle m_\rho,n\rangle.
 \label{eq:toric-pole-criterion-new}
\end{equation}
Consequently the reduced pole divisor is the union of the $D_\rho$
with $\operatorname{ord}_{D_\rho}(f)<0$; order-zero boundary divisors
remain part of the boundary but are not pole components.
We call
\[
 \left\{m\in M_\R:
 \min_{n\in\Supp(f)}\langle m,n\rangle<0\right\}
\]
the pole region.

Since $\Sigma$ refines $\mathcal N(P_{f,\infty})$, every cone
$\tau\in\Sigma$ whose relative interior lies in the pole region
has a unique supporting face $F(\tau)$ not containing $0$ such that
$\tau^\circ\subset\sigma_{F(\tau)}^\circ$.  If $\tau$ is smooth and
$\tau(1)=\{\rho_1,\ldots,\rho_q\}$, choose the corresponding toric
coordinates $y_1,\ldots,y_q$ transverse to $O_\tau$.  Locally along
$O_\tau$ one has
\begin{equation}
 f=y_1^{a_1}\cdots y_q^{a_q}g_\tau,
 \qquad
 a_i=\operatorname{ord}_{D_{\rho_i}}(f),
 \label{eq:toric-local-form-new}
\end{equation}
where $g_\tau$ is regular and its restriction to $O_\tau$ is a
monomial translate of $f_{F(\tau)}$.  Along a cone all of whose rays
are pole rays, write $a_i=-b_i$ with $b_i>0$.

We call $f$ \emph{nondegenerate at infinity} if, for every face
$F\preceq P_{f,\infty}$ not containing $0$, the hypersurface
\[
 \{f_F=0\}\subset T_M
\]
is empty or smooth of pure codimension one.  By
\eqref{eq:toric-local-form-new}, this condition implies that the
closure of $V(f)$ is smooth along the pole strata and meets the
corresponding torus orbits transversally.  Hence the toric boundary
together with the zero divisor is SNC near the zero--pole
indeterminacy locus.  It is therefore SNC on a neighbourhood of the
entire pole divisor after shrinking that neighbourhood away from the
irrelevant part of the zero divisor.  This is exactly the local SNC
hypothesis of Proposition~\ref{prop:rational-comparison}; no smoothness
of $V(f)$ away from the polar locus is required.  Consequently the
direct toric class computes $\mathcal S_{f,\infty}$.

For Proposition~\ref{prop:rational-comparison}, take the auxiliary
divisor $\Delta$ to be the union of the toric boundary and the closure
of $V(f)$.  Since $U=T_M$, one has
\[
 U^\times=T_M\setminus V(f)=\Omega_{X_\Sigma}\cap T_M,
\]
so $(\Omega_{X_\Sigma}\setminus U^\times)_{\mathrm{red}}$ is precisely
the restriction of the toric boundary.  It is SNC near
$g_{X_\Sigma}^{-1}(0)$, and hence formula
\eqref{eq:rational-snc-formula} applies with the identity as resolution.

\subsection{Essential faces and Kummer face hypersurfaces}
\label{subsec:essential-kummer-faces}

Let $\mathcal F_\infty(P_{f,\infty})$ be the set of nonempty faces of
$P_{f,\infty}$ not containing $0$.  A face
$F\in\mathcal F_\infty(P_{f,\infty})$ is \emph{essential at infinity}
if no proper face of $P_{f,\infty}$ contains both $F$ and $0$.  We write
\begin{equation}
 \Ess(P_{f,\infty})
 =\{F\in\mathcal F_\infty(P_{f,\infty}):
        F\text{ is essential at infinity}\}.
 \label{eq:essential-face-set}
\end{equation}

\begin{lemma}[Relative pole-link calculation]
\label{lem:essential-link-euler}
Assume that $r=d$, retain the smooth refinement $\Sigma$ chosen above,
and let $\Sigma^{\mathrm{pole}}$ be the set of nonzero cones
$\tau\in\Sigma$ such that $\operatorname{ord}_{D_\rho}(f)<0$ for every
ray $\rho\preceq\tau$.  For
$F\in\mathcal F_\infty(P_{f,\infty})$, set
\[
 c_F=
 \sum_{\substack{\tau\in\Sigma^{\mathrm{pole}}\\
                  \tau^\circ\subset\sigma_F^\circ}}
 (-1)^{\dim\tau-1}.
\]
Then
\begin{equation}
 c_F=
 \begin{cases}
  (-1)^{\operatorname{codim}_{P_{f,\infty}}F-1},
    &F\in\Ess(P_{f,\infty}),\\
  0,&F\notin\Ess(P_{f,\infty}).
 \end{cases}
 \label{eq:essential-link-euler}
\end{equation}
In particular, if $0\in\relint(P_{f,\infty})$, every proper face is
essential and the first case applies throughout
$\mathcal F_\infty(P_{f,\infty})$.
\end{lemma}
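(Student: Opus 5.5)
The plan is to read $c_F$ as a compactly supported Euler characteristic of a polyhedral region and then compute it topologically.

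\textbf{Step 1: rewrite $c_F$ as an Euler characteristic.} Since $\Sigma$ refines $\mathcal N(P_{f,\infty})$, the cones $\tau\in\Sigma$ with $\tau^\circ\subset\sigma_F^\circ$ are exactly the open cells of $\Sigma$ subdividing the open cone $\sigma_F^\circ$. For such $\tau$, all its rays are pole rays precisely when $\tau$ lies in
\[
 R:=\{m:\min_{P_{f,\infty}}\langle m,-\rangle<0\}\cup\{0\}.
\]
This uses that the function $\min_{P_{f,\infty}}\langle m,-\rangle$ is linear on each cone of $\Sigma$. So $c_F$ is a signed count over the cells of $\sigma_F^\circ$ whose closures meet the region $\{h\ge 0\}$ only at the origin, where $h(m)=\langle m,F\rangle$.

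Now intersect with the unit sphere $S$ (or with an affine slice). A cone $\tau$ then becomes an open cell of dimension $\dim\tau-1$. Hence
\[
 c_F=\chi_c\bigl(L_F\bigr),
\]
where $L_F$ is the union of the open cells of $\sigma_F^\circ\cap S$ whose closures lie in $\{h<0\}$.

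\textbf{Step 2: split by the sign of $h$ on $\sigma_F^\circ$.} Since $\sigma_F$ is normal to $F$, the function $h(m)=\langle m,n\rangle$ for $n\in F$ is linear on $\sigma_F$. Also $h(m)\le\langle m,0\rangle=0$ there, because the minimum over $P_{f,\infty}\ni 0$ is attained on $F$.

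\emph{Case A: $F$ is nonessential.} Then some proper face $G\supseteq F\cup\{0\}$ exists, so $\sigma_G\subseteq\sigma_F$ is a nonzero face of $\sigma_F$ on which $h=0$. I expect $L_F$ to equal the open cells of $\sigma_F^\circ\cap S$ lying in the closed cell $\overline{\sigma_F\cap S}$ minus the star of the closed subcomplex $Z=\{h=0\}\cap\sigma_F\cap S$. Additivity of $\chi_c$ then gives
\[
 \chi_c(L_F)=\chi_c(\sigma_F^\circ\cap S)-\chi_c(\text{part of the open star of }Z\text{ inside the open cell}).
\]
The goal is to show these are equal. Via a regular-neighbourhood retraction, the open star inside the open cell should be homeomorphic to the set of points of the open ball near the nonempty proper face $Z$ of its boundary. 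That set is a half-open ball, with $\chi_c=0$, hmm. More cleanly: $L_F$ is the open cell minus a collar meeting the boundary face $Z$, so $L_F$ is homeomorphic to a ball with part of its boundary included. Such a space has $\chi_c=0$ when the included boundary part is a nonempty contractible proper subset of the sphere.

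\emph{Case B: $F$ is essential.} Then $h<0$ on $\sigma_F\setminus\{0\}$, since a nonzero $m\in\sigma_F$ with $h(m)=0$ would define a proper face containing $F$ and $0$. So every cell of $\sigma_F^\circ$ qualifies, and $L_F=\sigma_F^\circ\cap S$ is an open ball of dimension $\dim\sigma_F-1=\operatorname{codim}F-1$. This gives $c_F=(-1)^{\operatorname{codim}F-1}$.

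\textbf{Step 3: the interior case.} If $0\in\relint P_{f,\infty}$, any face containing $0$ is all of $P_{f,\infty}$, so every proper face is essential.

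\textbf{Main obstacle.} The hardest step is Case A, specifically justifying the topological description of $L_F$. I would instead run a combinatorial argument. Let $K=\overline{\sigma_F\cap S}$ with the induced subdivision, so $K$ is a ball. Let $Z\subset\partial K$ be the subcomplex where $h=0$; it is a nonempty union of closed faces and equals a single face $\sigma_G\cap S$, so it is contractible. The cells in question are the interior cells of $K$ disjoint from $Z$, and these are exactly the interior cells whose closures avoid $Z$.

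Then:
\[
 \chi_c(L_F)=\chi(K)-\chi(\partial K)-\bigl(\chi(N)-\chi(N\cap\partial K)\bigr),
\]
where $N$ is the closed star of $Z$ in the barycentric-refined complex. Each of $K$, $N$, and $N\cap\partial K$ is contractible, with the last being a regular neighbourhood of $Z$ in the sphere $\partial K$. This gives
\[
 1-\chi(\partial K)-(1-1)=1-\chi(\partial K),
\]
which is not zero in general. So I expect to need to recount: the excluded cells should be those interior cells whose closure meets $Z$, and these are the interior cells of the open star. The corrected computation is
\[
 \chi_c(L_F)=\chi_c(K^\circ)-\chi_c(\mathrm{st}(Z)\cap K^\circ).
\]
Here $\mathrm{st}(Z)\cap K^\circ$ is homeomorphic to $Z\times$ an open half-ball, which has $\chi_c$ equal to $\chi_c(K^\circ)$. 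That gives $c_F=0$. Verifying this product structure, for instance via a PL collar of $Z$ in $K$, is where I expect the real work to lie.
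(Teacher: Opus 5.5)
Your reduction is the same as the paper's. You identify $c_F$ with the compactly supported Euler characteristic of the union of those interior open cells of the subdivided ball $K=\overline{\sigma_F\cap S}$ (of dimension $k=\operatorname{codim}F-1$) whose closures avoid the order-zero face $Z=K\cap\{h=0\}$. Your essential case is exactly the paper's open-ball argument.

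\textbf{The gap.} The nonessential case, which is the only substantive part, is left unproved. Your final claim is that $\mathrm{st}(Z)\cap K^\circ\cong Z\times(\text{open half-ball})$, so that it has $\chi_c=\chi_c(K^\circ)=(-1)^k$. This is wrong as stated. Since $Z$ is a closed ball, $\chi_c(Z)=1$, and such a product has the Euler characteristic of the half-ball factor, namely $(-1)^{k-\dim Z}$ or $0$, which is not $(-1)^k$ in general. Concretely, take $K$ an unsubdivided triangle and $Z$ an edge; this occurs for the vertex $F=\{e_1\}$ of $\operatorname{Conv}(0,e_1,e_2,e_3)$. Then $\mathrm{st}(Z)\cap K^\circ$ is the open triangle, with $\chi_c=1$, whereas a closed interval times an open interval has $\chi_c=-1$. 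What is true is that this set is an open $k$-ball, i.e.\ $\relint(Z)$ times an open ball. Proving that, however, needs regular-neighbourhood theory that you have not supplied. Your exploratory paragraph has the same problem: $\chi_c$ is not a homotopy invariant, so contractibility of the included boundary piece is not the relevant condition (an included closed disc would give $(-1)^k+1$).

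\textbf{The missing idea.} Count through the complementary subcomplex instead of the star, so that you only need homotopy invariance of the ordinary Euler characteristic of compact complexes.
\begin{itemize}
\item Since $\Sigma$ is simplicial and $h$ is linear on $\sigma_F$, a simplex of $K$ lies in $Z$ if and only if all its vertices have $h=0$. Its closure misses $Z$ if and only if all its vertices have $h<0$.
\item Hence $Z$ and $A:=\bigcup\{\bar e:\bar e\cap Z=\varnothing\}$ are complementary full subcomplexes. The cells you count are precisely the cells of $A$ not contained in $\partial K$, so $c_F=\chi(A)-\chi(A\cap\partial K)$.
\item For complementary full subcomplexes, the join structure of each simplex gives deformation retractions of $K\setminus Z$ onto $A$, and of $\partial K\setminus Z$ onto $A\cap\partial K$.
\item In an affine slice, $K\setminus Z=K\cap\{h<0\}$ is convex. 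Also $\partial K\setminus Z$ is a sphere of dimension $k-1\geq 0$ minus a nonempty proper closed face, hence nonempty and contractible.
\item Therefore $c_F=1-1=0$.
\end{itemize}

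Your discarded first computation gives the same answer once it is corrected. Replace $\chi(N)$ and $\chi(N\cap\partial K)$ by the compactly supported Euler characteristics of the open star $K\setminus A$ and of its boundary part, namely $\chi(K)-\chi(A)$ and $\chi(\partial K)-\chi(A\cap\partial K)$.

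The paper itself only asserts this vanishing (``the corresponding relative link is conical''). An argument of this kind is what is needed to complete either version.
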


\begin{proof}
Intersecting the refinement of $\sigma_F$ with a sphere identifies
$c_F$ with the compactly supported Euler characteristic of the
relative pole link of $F$, with its order-zero boundary removed.  If
$F$ is essential, this link is an open ball of dimension
$\operatorname{codim}_{P_{f,\infty}}F-1$, giving the first value in
\eqref{eq:essential-link-euler}.  If a proper face contains both $F$
and $0$, the corresponding relative link is conical and its
alternating contribution vanishes.  This is the toric-link form of
Raibaut's Euler calculation
\cite[Lemma~3.20]{raibaut2012singularites}.
\end{proof}

For $F\in\mathcal F_\infty(P_{f,\infty})$, put
\[
 \begin{aligned}
  \widehat F&=\Conv(\{0\}\cup F),
  &N_F&=N\cap\Span_\R(F-F),\\
  N_{\widehat F}&=N\cap\Span_\R(\widehat F),
  &T_{\widehat F}&=\Spec k[N_{\widehat F}].
 \end{aligned}
\]
The intrinsic Kummer face hypersurface is
\begin{equation}
 V_F(f)=\{x\in T_{\widehat F}:f_F(x)=1\}.
 \label{eq:face-kummer-hypersurface}
\end{equation}
The quotient $N_{\widehat F}/N_F$ has rank one.  Its primitive grading
is constant, say of value $a_F>0$, on $F$ and induces a canonical
$\mu_{a_F}$-action on $V_F(f)$.  Indeed, for
$\zeta\in\mu_{a_F}$ the grading action sends every monomial of $f_F$ to
$\zeta^{a_F}$ times itself, and hence fixes the equation $f_F=1$.

\subsection{The motivic Newton formula}
\label{subsec:motivic-newton-formula}

\begin{theorem}[Newton formula at infinity]
\label{thm:motivic-newton-full}
Let $f\in k[N]$ be nondegenerate at infinity over a field of
characteristic zero, and put $r=\dim P_{f,\infty}$.  Then
\begin{equation}
 \mathcal S_{f,\infty}
 =(-1)^{d-r}\sum_{F\in\Ess(P_{f,\infty})}
      (1-\Lef)^{d-\dim F-1}[V_F(f)]
 \quad\text{in }\mathcal M_k^{\widehat\mu}.
 \label{eq:newton-intrinsic-face}
\end{equation}
In particular, the overall sign is positive when $P_{f,\infty}$ is
full-dimensional.
\end{theorem}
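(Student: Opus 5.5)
First reduce to the full-dimensional case $r=d$.  If $r<d$, the sublattice $N'=N\cap\Span_\R(P_{f,\infty})$ has rank $r$, and we choose a splitting $N=N'\oplus N''$.  This gives $T_M\simeq T'\times\Gm^{d-r}$, and $f$ is pulled back from a Laurent polynomial $f'$ on $T'$.  Nondegeneracy, the faces, $\Ess$ and the Kummer hypersurfaces $V_F(f)$ are the same for $f$ and $f'$.  Since $f=f'\circ\mathrm{pr}$ on a product, the open-pair formula shows $\mathcal S_{f,\infty}=[\Gm^{d-r}]\,\mathcal S_{f',\infty}=(\Lef-1)^{d-r}\mathcal S_{f',\infty}$.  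Concretely, compactify by $X_{\Sigma'}\times\Gm^{d-r}$ (the proper modification formula allows this product, or its closure in $(\mathbb P^1)^{d-r}$ with the order-zero boundary removed).  The identity $(\Lef-1)^{d-r}(1-\Lef)^{r-\dim F-1}=(-1)^{d-r}(1-\Lef)^{d-\dim F-1}$ then produces the sign.  So it suffices to treat $r=d$.

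Assume $r=d$, take the smooth refinement $\Sigma$ and $\Delta$ as in Subsection~\ref{subsec:toric-pole-faces}, and apply Proposition~\ref{prop:rational-comparison}.  By the adapted SNC case, formula \eqref{eq:rational-snc-formula} applies with $h=\mathrm{id}$.  The open-pair boundary near $g^{-1}(0)$ is the toric boundary (restricted to $\Omega_{X_\Sigma}$).  Its positive-order components for $g=1/f$ are the pole divisors $D_\rho$ with orders $b_\rho$, and the order-zero toric divisors are boundary components that excise strata.  The nonempty index sets $I\subset A_+$ with $E_I^\circ\neq\varnothing$ are therefore exactly the cones $\tau\in\Sigma^{\mathrm{pole}}$.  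For such $\tau$, $E_\tau^\circ=O_\tau\setminus\overline{V(f)}$.  By \eqref{eq:toric-local-form-new} the unit is $g_\tau^{-1}$ restricted to $O_\tau$, a monomial translate of $f_{F(\tau)}^{-1}$.  Hence
\[
\widetilde E_\tau^\circ=\{(p,z)\in O_\tau\times\Gm: z^{m_\tau}=g_\tau(p)\},\qquad m_\tau=\gcd(b_\rho:\rho\in\tau(1)).
\]
This gives $\mathcal S_{f,\infty}=\sum_{\tau\in\Sigma^{\mathrm{pole}}}(1-\Lef)^{\dim\tau-1}[\widetilde E_\tau^\circ]$.

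The key step is to identify $[\widetilde E_\tau^\circ]$ as an equivariant class depending only on $F=F(\tau)$:
\[
[\widetilde E_\tau^\circ]=(\Lef-1)^{d-\dim\tau-\dim F}[V_F(f)].
\]
Here $\dim O_\tau=d-\dim\tau$, and $V_F(f)$ has dimension $\dim F$ since $\dim T_{\widehat F}=\dim F+1$.  To prove it, write $O_\tau=\Spec k[\tau^\perp\cap N]$.  The face polynomial lives on $N_{\widehat F}\subset\tau^\perp\cap N$, which holds because $\tau^\circ\subset\sigma_F^\circ$ forces $F\subset$ an affine hyperplane on which the rays of $\tau$ are constant.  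Then split off a complementary torus $\Gm^{d-\dim\tau-\dim F-1}$.  On the remaining torus $T_{\widehat F}\times\Gm_z$, the equation $z^{m_\tau}=f_F$ is converted by an invertible monomial change into $f_F=1$ times a free $\Gm$ factor.  This is analogous to the substitution \eqref{eq:zero-pole-monomial-change}, using that $f_F$ is homogeneous of degree $a_F$ for the primitive grading and that $m_\tau$ relates to $a_F$ through the values $\langle m_\rho,F\rangle=-b_\rho$.  The $\mu_{m_\tau}$-action must then be shown to become the canonical $\mu_{a_F}$-action on $V_F(f)$ times an action on the torus factors that is linear in affine-lifted coordinates.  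Such an action is killed by the affine-lifting relation, i.e.\ $[\Gm]$ with a linear action equals $\Lef-1$ with trivial action.

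I expect this step to be the main obstacle: the gcd bookkeeping between $m_\tau$ and $a_F$, and checking that the characters match Kummer conventions (the sign in $u^{-1}$).

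Finally, sum over $\tau$ grouped by $F$.  Each $\tau$ contributes
\[
(1-\Lef)^{\dim\tau-1}(\Lef-1)^{d-\dim\tau-\dim F}[V_F]=(-1)^{d-\dim F-\dim\tau}(1-\Lef)^{d-\dim F-1}[V_F].
\]
Summing over $\tau$ gives the coefficient $(-1)^{d-\dim F-1}c_F$.  Lemma~\ref{lem:essential-link-euler} evaluates $c_F=(-1)^{d-\dim F-1}$ for essential $F$ (since $\operatorname{codim}F=d-\dim F$) and $0$ otherwise.  The coefficient is thus $1$ for essential faces and $0$ otherwise, yielding \eqref{eq:newton-intrinsic-face} with positive sign.
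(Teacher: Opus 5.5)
Your overall route is the paper's. The reduction to $r=d$, the orbit sum over $\Sigma^{\mathrm{pole}}$ from Proposition~\ref{prop:rational-comparison} with the identity resolution, and the grouping by faces via Lemma~\ref{lem:essential-link-euler} (with the sign count) are all correct. Your intermediate identity $[\widetilde E_\tau^\circ]=(\Lef-1)^{d-\dim\tau-\dim F}[V_F(f)]$ is also true, and slightly sharper than what the paper uses. The paper proves it only after multiplication by $(\Lef-1)^{\dim\tau-1}$, by comparing both sides with the ambient hypersurface $\{f_F=1\}\subset T_M$ in \eqref{eq:kummer-orbit-face} and \eqref{eq:ambient-intrinsic-face}. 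The gap is in your argument for this identity: it is the heart of the proof, and the sketch as written does not work.

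\emph{What fails.} The inclusion $N_{\widehat F}\subset\tau^\perp\cap N$ is false, because each ray of $\tau$ takes the value $-b_\rho\neq0$ on $F$. Only the direction lattice $N_F$ lies in $L:=\tau^\perp\cap N$; that is all your parenthetical remark proves. Hence $f_F$ is not a function on $O_\tau=T_L$. Only its translate $g_\tau|_{O_\tau}=\chi^{-n_0}f_F$ is, where $n_0=\sum_i a_in^i$ for vectors $n^i\in N$ dual to the $m_{\rho_i}$ which together with $L$ give a basis of $N$. Nor can $O_\tau$ contain a copy of $T_{\widehat F}$ complementary to $\Gm^{d-\dim\tau-\dim F-1}$. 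When $\dim\tau=d-\dim F$ (for instance $F$ a vertex and $\tau$ maximal, so $O_\tau$ is a point), that exponent is $-1$.

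\emph{The missing idea.} The missing direction of $N_{\widehat F}$ is supplied by the Kummer coordinate itself.
\begin{enumerate}
\item Write $b_i=e_\tau b_i'$ and put $w:=\sum_i b_i'n^i$. This vector is primitive; this is the paper's completion of $(b_1/e_\tau,\dots,b_q/e_\tau)$ to a unimodular basis. Hence $N'':=L\oplus\Z w$ is a direct summand of $N$, and $n_0=-e_\tau w$.
\item The torus isomorphism $T_L\times\Gm\to T_{N''}$ sends $\chi^l\mapsto\chi^l$ and $\chi^w\mapsto z$, where $z$ is the coordinate on the second factor. It pulls $f_F$ back to $z^{-e_\tau}g_\tau|_{O_\tau}$. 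So it carries $\widetilde E_\tau^\circ$ onto $\{f_F=1\}\subset T_{N''}$, with $\mu_{e_\tau}$ acting through the $w$-coordinate $\phi$.
\item Since $N''$ is saturated and contains the vertices of $F$, it contains $N_{\widehat F}$ as a direct summand. A complement $C$ has rank $d-\dim\tau-\dim F$.
\item The grading $\phi$ vanishes on $N_F\subset L$ and equals $-e_\tau$ on $F$. So on $N_{\widehat F}$ it is $-(e_\tau/a_F)$ times the primitive grading. This is the gcd bookkeeping you postponed: it forces $a_F\mid e_\tau$. It also shows the action on $V_F(f)$ factors through $\mu_{e_\tau}\to\mu_{a_F}$ as the grading action, with orientation fixed by \eqref{eq:local-kummer-cover}.
\item On $T_C$ the action is linear, so the affine-lifting relation gives your identity.
\end{enumerate}
With this step supplied, the rest of your proof goes through unchanged.
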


\begin{proof}
Assume first that $r=d$, and use the notation
$\Sigma^{\mathrm{pole}}$ and $c_F$ of
Lemma~\ref{lem:essential-link-euler}.  For
$\tau\in\Sigma^{\mathrm{pole}}$ of dimension $q$, write $a_i=-b_i$ in
\eqref{eq:toric-local-form-new} and put
\[
 e_\tau=\gcd(b_1,\ldots,b_q),
 \qquad
 O_\tau^\circ=O_\tau\setminus\overline{V(f)}.
\]
Let $\widetilde O_\tau^\circ\to O_\tau^\circ$ be the
$\mu_{e_\tau}$-Kummer cover occurring in the adapted SNC formula for
$1/f$.
Indeed, \eqref{eq:toric-local-form-new} gives
\[
 \frac1f
 =y_1^{b_1}\cdots y_q^{b_q}g_\tau^{-1},
\]
so the convention \eqref{eq:local-kummer-cover} identifies this cover
locally with
\[
 \{(p,z)\in O_\tau^\circ\times\Gm:
       z^{e_\tau}=g_\tau(p)\},
 \qquad \xi\cdot z=\xi z.
\]
Proposition~\ref{prop:rational-comparison} and
\eqref{eq:rational-snc-formula} give
\begin{equation}
 \mathcal S_{f,\infty}
 =\sum_{\tau\in\Sigma^{\mathrm{pole}}}
 (1-\Lef)^{\dim\tau-1}[\widetilde O_\tau^\circ].
 \label{eq:toric-snc-sum}
\end{equation}

We first identify the Kummer cover in each orbit contribution.  For a
face $F$ not containing $0$, introduce temporarily
\[
 V_F^{\mathrm{amb}}(f)=\{x\in T_M:f_F(x)=1\}.
\]
Completing the primitive vector
$(b_1/e_\tau,\ldots,b_q/e_\tau)$ to a unimodular basis and applying the
resulting monomial change of coordinates to
\eqref{eq:toric-local-form-new} gives an equivariant isomorphism
\begin{equation}
 V_{F(\tau)}^{\mathrm{amb}}(f)
 \simeq\Gm^{q-1}\times\widetilde O_\tau^\circ,
 \qquad
 [V_{F(\tau)}^{\mathrm{amb}}(f)]
 =(\Lef-1)^{q-1}[\widetilde O_\tau^\circ].
 \label{eq:kummer-orbit-face}
\end{equation}
The finite group acts on the Kummer coordinate and fixes the split
torus factors.  Splitting the character lattice transverse to
$N_{\widehat F}$ also gives
\begin{equation}
 [V_F^{\mathrm{amb}}(f)]
 =(\Lef-1)^{d-\dim F-1}[V_F(f)].
 \label{eq:ambient-intrinsic-face}
\end{equation}

Substituting \eqref{eq:kummer-orbit-face} into
\eqref{eq:toric-snc-sum} and grouping by supporting face gives
\[
 \mathcal S_{f,\infty}
 =\sum_{F\in\mathcal F_\infty(P_{f,\infty})}c_F
      [V_F^{\mathrm{amb}}(f)].
\]
Lemma~\ref{lem:essential-link-euler} and
\eqref{eq:ambient-intrinsic-face} now yield
\[
 \mathcal S_{f,\infty}
 =\sum_{F\in\Ess(P_{f,\infty})}
   (1-\Lef)^{d-\dim F-1}[V_F(f)],
\]
which proves the full-dimensional case.

For general $r$, set
\[
 N_f=N\cap\Span_\R(P_{f,\infty})
\]
and choose a splitting $N\simeq N_f\oplus N_0$.  If
$M_f=N_f^\vee$ and $T_0=\Spec k[N_0]$, then
\[
 T_M\simeq T_{M_f}\times T_0,
\]
and $f$ is pulled back from $T_{M_f}$.  The adapted SNC formula gives
\[
 \mathcal S_{f,\infty}
 =(\Lef-1)^{d-r}\mathcal S_{f|T_{M_f},\infty}.
\]
Applying the full-dimensional formula in rank $r$ gives
\eqref{eq:newton-intrinsic-face}.
\end{proof}

The identity in Theorem~\ref{thm:motivic-newton-full} is Raibaut's
Newton-polyhedral formula \cite[Theorem~3.3]{raibaut2012singularites},
but the proof given here follows a different geometric route from his.
It proceeds directly on the toric rational compactification, decomposes
the open-pair SNC expression orbit by orbit, identifies the resulting
Kummer covers with the intrinsic face hypersurfaces, and computes the
remaining coefficients by relative pole links.  This strategy is
closer in spirit to the toric calculations of Matsui--Takeuchi for
monodromy and motivic Milnor fibres at infinity
\cite{matsui2011monodromy,MatsuiTakeuchiInfinity}.  The argument above
extends that orbitwise approach to the equivariant motivic identity in
\eqref{eq:newton-intrinsic-face}: it includes the cancellation of
nonessential faces, retains the order-zero boundary, and requires
neither convenience nor full-dimensionality of $P_{f,\infty}$.

\section{Hodge realization and the orbifold specialization}
\label{sec:hodge-ehrhart}

\subsection{Weighted Ehrhart polynomials and local decompositions}
\label{subsec:weighted-ehrhart-basic}
\label{subsec:stringy-orbifold-poincare}

Fix a positive-dimensional lattice polytope $P\subset N_\R$
containing $0$, and put
\[
 N_P=N\cap\Span_\R P,
 \qquad
 r=\dim P.
\]
We work throughout this subsection with the canonical radial face fan
\begin{equation}
 \Sigma_P
 =\{c_F=\R_{\geq0}F:
       \varnothing\ne F\preceq P,\ 0\notin F\}
   \cup\{\{0\}\}.
 \label{eq:canonical-radial-face-fan}
\end{equation}
We write $\boldsymbol\Sigma_P$ for this fan endowed with the stacky
ray vectors given by the nonzero vertices of $P$.  Let $\psi_P$
be the radial height function whose unit sublevel set is $P$, and write
$\psi_c=\psi_P|_c$ for $c\in\Sigma_P$.  For a nonempty face $F$ not
containing $0$, put
\[
 \widehat F=\Conv(\{0\}\cup F),
 \qquad
 \nu_{\widehat F}=\psi_{c_F}|_{\widehat F}.
\]
Thus $\nu_{\widehat F}$ is the unique rational linear function which
vanishes at $0$ and equals $1$ on $F$.

For these pairs $(\widehat F,\nu_{\widehat F})$, we use the mixed
weighted Ehrhart polynomial
\[
 h^*(\widehat F,\nu_{\widehat F};u,v)
 \in\Lambda_{\widehat\mu}[u,v]
\]
of \cite[Section~4]{StapledonMonodromy}, normalized by specialization
at $v=1$.  Define
\begin{equation}
 h^*_{\Psi}(\widehat F,\nu_{\widehat F};u,v)
 :=\operatorname{scal}_{\Psi}\!\left(
 h^*(\widehat F,\nu_{\widehat F};u,v)\right).
 \label{eq:nearby-scalarized-hstar}
\end{equation}
Thus $\mathbf e_\alpha u^pv^\ell$ becomes
$u^{p-\alpha}v^\ell$, and scalarization is applied only after the
character-valued polynomial has been formed.

For $c\in\Sigma_P$ and $\tau\preceq c$, let $g([\tau,c];t)$ be the Stanley
$g$-polynomial of the corresponding Eulerian interval, and set
\begin{align}
 h^*_{\psi_c}(c;t)
 &:=(1-t)^{\dim c}\sum_{m\in c\cap N_P}t^{\psi_c(m)},
 \label{eq:stacky-weighted-hstar-new}\\
 l^*_{\psi_c}(c;t)
 &:=\sum_{\tau\preceq c}
 h^*_{\psi_\tau}(\tau;t)(-1)^{\dim c-\dim\tau}
 g([\tau,c]^*;t).
 \label{eq:stacky-weighted-lstar-new}
\end{align}
For the zero cone both polynomials are one.  They are respectively the
full weighted semigroup contribution and its primitive local part
\cite{StapledonWeighted,KatzStapledonLocal}.  Write
\begin{equation}
 l^*_{\psi_c}(c;t)=\sum_{a\in\Q}n_{c,a}t^a,
 \qquad
 m_c:=r-\dim c.
 \label{eq:local-age-multiplicities}
\end{equation}
The integers $n_{c,a}$ are nonnegative.  On a simplicial
presentation they count open Box elements of age $a$; intrinsically
they are the graded dimensions of the primitive multiplicity spaces
in the weighted local decomposition.  Stanley $g$-kernel inversion
gives
\begin{equation}
 h^*_{\psi_c}(c;t)
 =\sum_{\tau\preceq c}l^*_{\psi_\tau}(\tau;t)g([\tau,c];t).
 \label{eq:weighted-local-decomposition-new}
\end{equation}

The truncation $c_F\cap\{\psi_P\leq1\}$ is $\widehat F$.  Comparing the defining
weighted lattice-point series therefore gives
\begin{equation}
 h^*_{\Psi}(\widehat F,\nu_{\widehat F};t,1)
 =h^*_{\psi_{c_F}}(c_F;t).
 \label{eq:face-pyramid-cone-hstar}
\end{equation}
Indeed, the nearby-scalarized weighted Ehrhart series on the left is
$(1-t)^{-1}\sum_{m\in c_F\cap N_P}t^{\psi_{c_F}(m)}$, so the defining
denominators give the same numerator.

For $c\in\Sigma_P$, define the compactly supported and ordinary
intersection-cohomology polynomials of the quotient fan by
\begin{align}
 h_{\mathrm c}(\Sigma_P/c;t)
 &:=\sum_{c\preceq\tau\in\Sigma_P}
 (t-1)^{r-\dim\tau}g([c,\tau];t),
 \label{eq:quotient-fan-ih-compact-new}\\
 h^{\mathrm{ord}}(\Sigma_P/c;t)
 &:=t^{r-\dim c}h_{\mathrm c}(\Sigma_P/c;t^{-1}).
 \label{eq:quotient-fan-ih-new}
\end{align}
For a complete $\Sigma_P$ these agree by intersection-cohomology
Poincar\'e duality \cite{barthel2002combinatorial}.  The local factor
$l^*_{\psi_c}(c;t)$ records the primitive weighted contribution on
$c$, whereas $h^{\mathrm{ord}}(\Sigma_P/c;t)$ records its ordinary
quotient-fan support.

When $0\in\relint(P)$, the fan $\Sigma_P$ and every quotient fan
$\Sigma_P/c$ are complete.  In this case put
\[
 \IH_c^{2\bullet}:=\IH^{2\bullet}(\Sigma_P/c;\mathbb C).
\]
The height function induces a strictly convex class on the quotient
fan and hence a Lefschetz operator
\[
 L_c:\IH_c^{2j}\longrightarrow\IH_c^{2j+2}.
\]
Combinatorial hard Lefschetz gives
\begin{equation}
 \IH_c^{2\bullet}
 =\bigoplus_{0\leq j\leq\lfloor m_c/2\rfloor}
   \bigoplus_{s=0}^{m_c-2j}L_c^s\mathsf P_c^{2j},
 \label{eq:quotient-fan-primitive-decomposition}
\end{equation}
where
\begin{equation}
 \mathsf P_c^{2j}
 :=\ker\!\left(
 L_c^{m_c-2j+1}:\IH_c^{2j}
 \longrightarrow\IH_c^{2(m_c-j+1)}
 \right),
 \qquad b_{c,j}:=\dim\mathsf P_c^{2j}.
 \label{eq:quotient-fan-primitive-part}
\end{equation}
Equivalently, taking graded dimensions in
\eqref{eq:quotient-fan-primitive-decomposition} gives
\begin{equation}
 h^{\mathrm{ord}}(\Sigma_P/c;t)
 =\sum_{0\leq j\leq\lfloor m_c/2\rfloor}
 b_{c,j}t^j(1+t+\cdots+t^{m_c-2j}).
 \label{eq:weak-fano-support-lefschetz-polynomial}
\end{equation}
Thus the quotient-fan support factor is a sum of Lefschetz strings;
the string generated by a class in $\mathsf P_c^{2j}$ begins in
degree $j$ and has length $m_c-2j+1$.

It is useful to distinguish the one-variable inversion
\eqref{eq:weighted-local-decomposition-new} from the local
factorization of the limit mixed $h^*$-polynomial.  If $\mathcal S$
is a lattice polyhedral subdivision of a lattice polytope $Q$, the
latter has the form
\begin{equation}
 h^*(Q,\mathcal S;u,v)
 =\sum_{G\in\mathcal S}
 v^{\dim G+1}l^*(G;uv^{-1})
 h(\operatorname{lk}_{\mathcal S}(G);uv).
 \label{eq:limit-mixed-hstar-local-factorization}
\end{equation}
Here the local $l^*$-factor is the primitive lattice contribution,
whereas the link $h$-polynomial is the pure support contribution
\cite[Definition~8.2]{KatzStapledonLocal}.  The weighted equivariant
version retains the same two factors, with the first one carrying the
finite character \cite[Corollary~6.13]{StapledonMonodromy}.  Thus
\eqref{eq:limit-mixed-hstar-local-factorization} is not obtained from
\eqref{eq:weighted-local-decomposition-new} merely by replacing $t$
with two variables: it separates the weighted local grading, entering
through $uv^{-1}$, from the pure Hodge--Tate support grading, entering
through $uv$.

For the radial fan, collecting the face contributions turns the link
factor in \eqref{eq:limit-mixed-hstar-local-factorization} into
\[
 h^{\mathrm{ord}}(\Sigma_P/c;t),
\]
the Poincar\'e polynomial of $\IH^{2\bullet}(\Sigma_P/c)$.  Thus the
full limit-Hodge formula retains the local weighted contribution
$l^*_{\psi_c}(c;t)$ together with this quotient-fan support.  Summing
their one-variable products gives the following orbifold
specialization.

\begin{definition}
The \emph{stringy orbifold Poincar\'e polynomial} of $P$ is
\begin{equation}
 P_{\mathrm{orb}}(P;t)
 :=\sum_{c\in\Sigma_P}
 l^*_{\psi_c}(c;t)h^{\mathrm{ord}}(\Sigma_P/c;t).
 \label{eq:stringy-orbifold-poincare-new}
\end{equation}
\end{definition}

Let $\Ess(P)$ denote the faces satisfying the condition in
\eqref{eq:essential-face-set}, and augment this set by
\begin{equation}
 \widehat\Ess(P)=
 \begin{cases}
  \Ess(P)\cup\{\varnothing\},&0\in\relint(P),\\
  \Ess(P),&0\in\partial_{\mathrm{rel}}P.
 \end{cases}
 \label{eq:augmented-essential-set}
\end{equation}
Set
\[
 c_\varnothing=\{0\},
 \qquad
 \dim\varnothing=-1,
 \qquad
 \widehat\varnothing=\{0\},
 \qquad
 \nu_{\widehat\varnothing}=0,
\]
and
\[
 h^*(\{0\},0;u,v)=\mathbf e_0,
 \qquad
 h^*_{\Psi}(\{0\},0;u,v)=1.
\]
The cones $c_F$ with $F\in\widehat\Ess(P)$ are precisely the cones of
$\Sigma_P$ not contained in the relative boundary of its convex
support.

\begin{lemma}[Radial relative-link identity]
\label{lem:radial-relative-link}
For every $a\in\Sigma_P$,
\begin{equation}
 \sum_{\substack{F\in\widehat\Ess(P)\\a\preceq c_F}}
 (t-1)^{r-\dim c_F}g([a,c_F];t)
 =h^{\mathrm{ord}}(\Sigma_P/a;t).
 \label{eq:radial-relative-link}
\end{equation}
\end{lemma}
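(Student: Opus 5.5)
The plan is to recognize the left-hand side of \eqref{eq:radial-relative-link} as the ``interior'' $h$-polynomial of the quotient fan $\Sigma_P/a$. The identity then becomes the compactly supported/ordinary intersection-cohomology duality already implicit in \eqref{eq:quotient-fan-ih-compact-new}--\eqref{eq:quotient-fan-ih-new}, i.e.\ Stanley's reciprocity for Eulerian posets with boundary.

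\emph{Step 1 (translate to the quotient fan).} By the observation just before the lemma, the cones $c_F$ with $F\in\widehat\Ess(P)$ are exactly the cones of $\Sigma_P$ not contained in the relative boundary of the convex support $|\Sigma_P|=\R_{\ge0}P$. Cones $\tau\succeq a$ correspond bijectively to cones $\tau/a$ of $\Sigma_P/a$, with $\dim(\tau/a)=\dim\tau-\dim a$ and interval $[a,\tau]$ equal to the face interval $[0,\tau/a]$. I will check that $\tau\succeq a$ avoids $\partial_{\mathrm{rel}}|\Sigma_P|$ iff $\tau/a$ avoids the relative boundary of $|\Sigma_P/a|$. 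The support $|\Sigma_P/a|$ is the image of the tangent cone of $|\Sigma_P|$ along $\relint a$, so it is a convex cone; this equivalence is then a local statement at a point of $\relint a$. After this step, the left-hand side reads
\[
 h_{\mathrm{int}}(\Sigma_P/a;t):=\sum_{\sigma\in\Sigma_P/a,\ \sigma\not\subset\partial|\Sigma_P/a|}(t-1)^{m_a-\dim\sigma}g([0,\sigma];t),
\]
while $h_{\mathrm c}(\Sigma_P/a;t)$ is the same sum over all cones. Here $m_a=r-\dim a$.

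\emph{Step 2 (reciprocity).} The claim is now $h_{\mathrm{int}}(\Delta;t)=t^{m}h_{\mathrm c}(\Delta;t^{-1})$ for a complete-or-convex-support fan $\Delta$ of dimension $m$. When $|\Delta|$ is a linear space (in particular whenever $0\in\relint P$, and also when $a$ is an interior cone), every cone is interior. The statement is then exactly the identity $h_{\mathrm c}=h^{\mathrm{ord}}$ for complete fans, quoted above from \cite{barthel2002combinatorial}, i.e.\ the generalized Dehn--Sommerville relations. When $|\Delta|$ is a proper pointed-or-not convex cone, I intersect with a transverse affine slice, or with a sphere as in Lemma~\ref{lem:essential-link-euler}. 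This exhibits the face poset of $\Delta$ as a polyhedral subdivision of a ball, with boundary the cones in $\partial|\Delta|$. Stanley's reciprocity for the toric $h$-polynomial of a subdivided ball (interior $h$ equals the reversed full $h$), in the form recorded by Katz--Stapledon \cite{KatzStapledonLocal}, then gives the identity. A lineality part, if present, only contributes a factor that is handled by the same computation on its complement.

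\emph{Main obstacle.} The delicate point is Step 1 in the boundary case $0\in\partial_{\mathrm{rel}}P$ with $a$ itself in the boundary. There one must verify that ``interior in $\Sigma_P$'' matches ``interior in $\Sigma_P/a$'', including the case where $|\Sigma_P/a|$ contains a nontrivial linear subspace. I would first prove this by the local description of the support near $\relint a$. Should the slicing argument become awkward, I would fall back on a direct inductive check: expand $g([a,\tau])$ via the Kalai--Stanley recursion and compare both sides using the Eulerian property of $[a,\tau]$ and the fact that the relative link of a boundary cone is a ball with vanishing interior Euler characteristic, exactly as in the conical cancellation of Lemma~\ref{lem:essential-link-euler}.
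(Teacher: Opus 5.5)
Your proposal is correct and follows essentially the paper's own argument: the paper also identifies the left-hand side with the sum of local $g$-polynomials over the cones of $\Sigma_P/a$ not contained in the relative boundary of its convex support, and then invokes the dual (Dehn--Sommerville in the complete case, Stanley reciprocity for balls otherwise) form of the fan $h$-polynomial, which reduces to $h_{\mathrm c}=h^{\mathrm{ord}}$ when the fan is complete, citing the same sources. Your Step~1 check that $\tau\succeq a$ is interior in $\Sigma_P$ if and only if $\tau/a$ is interior in $\Sigma_P/a$ makes explicit a point the paper leaves implicit; it holds because the proper faces of $(|\Sigma_P|+\Span_{\R} a)/\Span_{\R} a$ are the images of the proper faces of $|\Sigma_P|$ containing $a$. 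The sphere slice already handles a nontrivial lineality space, so your separate remark about a lineality factor is unnecessary.
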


\begin{proof}
For a pure fan with convex support, the dual form of the fan
$h$-polynomial expresses ordinary intersection cohomology as the sum
of the local $g$-polynomials over the cones not contained in the
relative boundary.  Applied to $\Sigma_P/a$, this is exactly the
left-hand side of \eqref{eq:radial-relative-link}.  Equivalently, it
follows from \eqref{eq:quotient-fan-ih-compact-new} and
\eqref{eq:quotient-fan-ih-new} by Stanley $g$-kernel inversion.  When
$0\in\relint(P)$ the fan is complete and this reduces to
$h_{\mathrm c}=h^{\mathrm{ord}}$; when $0$ lies on the relative
boundary, the omitted cones form the conical boundary links.  See
\cite{barthel2002combinatorial,KatzStapledonLocal}.
\end{proof}

Combining \eqref{eq:weighted-local-decomposition-new} with
Lemma~\ref{lem:radial-relative-link}, and interchanging the two finite
sums, gives the radial local decomposition
\begin{align}
 &\sum_{F\in\widehat\Ess(P)}
 (t-1)^{r-\dim c_F}h^*_{\psi_{c_F}}(c_F;t)\notag\\
 &\qquad=
 \sum_{c\in\Sigma_P}
 l^*_{\psi_c}(c;t)
 h^{\mathrm{ord}}(\Sigma_P/c;t)\notag\\
 &\qquad=P_{\mathrm{orb}}(P;t).
 \label{eq:radial-weighted-local-decomposition}
\end{align}
This identity is the $v=1$ shadow of the local--support factorization:
it retains the total age grading but forgets the separate limit-weight
grading.

\subsection{The Newton--Hodge formula}
\label{subsec:mixed-hstar-realization}
\label{subsec:newton-hodge-formula}

Let $f\in\C[N]$ be nondegenerate at infinity, and apply the preceding
construction to $P=P_{f,\infty}$.  For
$F\in\mathcal F_\infty(P_{f,\infty})$, the function
$\nu_{\widehat F}$ in \eqref{eq:face-pyramid-cone-hstar} encodes the
canonical finite action on $V_F(f)$.  The equation $f_F-1=0$ defines a
nondegenerate hypersurface in $T_{\widehat F}$ with Newton polytope
$\widehat F$.  Hence the
equivariant Danilov--Khovanskii formula
\cite[Theorem~5.11]{StapledonMonodromy} gives
\begin{equation}
 uv\,\chi_{\Hdg}^{\widehat\mu}([V_F(f)])
 =(uv-1)^{\dim F+1}\mathbf e_0
 +(-1)^{\dim F+2}
 h^*(\widehat F,\nu_{\widehat F};u,v).
 \label{eq:face-hypersurface-hstar}
\end{equation}

We now apply this facewise realization to the motivic Newton formula.
If $r=\dim P_{f,\infty}$, Weisner's identity for the augmented set
\eqref{eq:augmented-essential-set} gives
\begin{equation}
 \sum_{F\in\widehat\Ess(P_{f,\infty})}
 (-1)^{r-\dim F-1}=1.
 \label{eq:essential-weisner-new}
\end{equation}
The empty-face convention packages the universal torus term and its
subsequent cancellation.

\begin{lemma}[Laurent middle-degree concentration]
\label{lem:laurent-middle-degree-concentration}
Let $f\in\C[N]$ be nondegenerate at infinity, put
$P=P_{f,\infty}$ and $r=\dim P$, and let $M_P=N_P^\vee$.  Then
\begin{equation}
 H^k_{\mathrm{dR}}(T_{M_P},f)=0,
 \qquad
 H^k(T_{M_P}^{\an},(T_{M_P})_t^{\an};\mathbb Q)=0
 \qquad(k\ne r).
 \label{eq:intrinsic-laurent-concentration}
\end{equation}
\end{lemma}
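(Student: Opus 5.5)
The plan is to reduce to the full-dimensional situation on the intrinsic torus, quote the Adolphson--Sperber theorem for the twisted de Rham complex, and then transfer the vanishing to relative Betti cohomology through a dimension comparison.

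\emph{Step 1: pass to the intrinsic torus.} Since $\Supp(f)\subset P\subset N_P\otimes\R$, the polynomial $f$ already lies in $\C[N_P]$. It is therefore a Laurent polynomial on $T_{M_P}$, and its Newton polytope at infinity $P$ is full-dimensional in $(N_P)_\R$, of dimension $r=\operatorname{rank}N_P$. The faces of $P$ not containing $0$ and their face polynomials $f_F$ are unchanged. For each such face, the hypersurface $\{f_F=0\}\subset T_M$ is the product of $\{f_F=0\}\subset T_{M_P}$ with a complementary split torus, via a splitting $N\simeq N_P\oplus N_0$ as in the proof of Theorem~\ref{thm:motivic-newton-full}. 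Hence nondegeneracy at infinity on $T_M$ is equivalent to nondegeneracy at infinity on $T_{M_P}$. From here on we may assume $r=d$.

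\emph{Step 2: de Rham vanishing.} With $r=d$, I will invoke \cite[Theorem~4.13]{AdolphsonSperberTwisted}. It states that for $f$ nondegenerate with respect to $\Delta_\infty(f)=\Conv(\{0\}\cup\Supp f)$ and $\dim\Delta_\infty(f)=d$, the cohomology of $(\Omega^\bullet_{T},\mathrm d+\mathrm df\wedge)$ vanishes outside degree $d$, and $H^d$ has dimension $d!\operatorname{vol}(P)$. The point to check is that their nondegeneracy condition coincides with ours. Their condition asks that, for every face $\sigma$ of $\Delta_\infty(f)$ not containing the origin, the functions $x_i\partial f_\sigma/\partial x_i$ have no common zero on the torus. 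Ours asks that $\{f_F=0\}$ be empty or smooth. These agree by the Euler relation: $f_F$ is quasi-homogeneous of degree $1$ for the linear function $\nu_{\widehat F}$, so $f_F=\sum_i w_i x_i\partial_i f_F$ with $w=\nu_{\widehat F}$. A common zero of the $x_i\partial_i f_F$ is therefore a singular point of $\{f_F=0\}$, and conversely. Their theorem is algebraic over a field of characteristic zero, so it applies over $\C$. This gives the first vanishing in \eqref{eq:intrinsic-laurent-concentration}.

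\emph{Step 3: Betti vanishing.} Sum the Qin--Zhang comparison \eqref{eq:intro-qin-zhang-comparison} over all $p\in\Z$ and $0\le\alpha<1$. Since $F_{\mathrm{irr}}$ and $F_{\lim}$ are finite exhaustive filtrations, and the generalized eigenspaces exhaust the relative cohomology, this gives
\[
\dim H^k_{\mathrm{dR}}(T_{M_P},f)=\dim_\C H^k\bigl(T_{M_P}^{\an},(T_{M_P})_t^{\an};\C\bigr)\qquad\text{for every }k,
\]
and Step~2 then yields the second vanishing. As an alternative that avoids the Hodge comparison, one can use the classical isomorphism between algebraic twisted de Rham cohomology and the rapid-decay (equivalently relative) cohomology $H^k(U,\operatorname{Re}f\gg0)$. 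Because $f$ is nondegenerate at infinity it has no singular fibres at infinity, so this cohomology is homotopy equivalent to $H^k(U,U_t)$ for $|t|\gg0$.

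\emph{Main obstacle.} The substantive content sits entirely in the cited results. The only delicate point is the hypothesis match in Step~2: Adolphson--Sperber's condition is phrased with $\Delta_\infty(f)$ rather than with convenience. I expect it to match the face-by-face smoothness condition exactly through the Euler-relation argument above, and that argument needs faces not containing $0$, which is precisely where $\nu_{\widehat F}\equiv1$ on $F$ is defined.
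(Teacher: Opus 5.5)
Your proposal is correct and follows essentially the same route as the paper. The weighted Euler identity identifies nondegeneracy at infinity with Adolphson--Sperber nondegeneracy relative to $P$; their Theorem~4.13 (at twist parameter $0$) on the full-dimensional intrinsic torus $T_{M_P}$ gives the de Rham vanishing; and the Qin--Zhang comparison \eqref{eq:intro-qin-zhang-comparison}, summed over $p$ and $\alpha$, transfers it to relative Betti cohomology. Your Step~1, checking that nondegeneracy on $T_M$ descends to $T_{M_P}$, just spells out a point the paper leaves implicit.
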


\begin{proof}
Choose a basis $e_1,\ldots,e_r$ of $N_P$, with corresponding
coordinates $x_1,\ldots,x_r$ on $T_{M_P}$.  For every face
$F\preceq P$ not containing the origin, choose a rational linear form
$\ell_F$ whose value on $F$ is a nonzero constant $c_F$.  The weighted
Euler identity
\[
 \sum_{i=1}^r \ell_F(e_i)x_i\frac{\partial f_F}{\partial x_i}
 =c_Ff_F
\]
shows that the logarithmic derivatives of $f_F$ have a common zero on
$T_{M_P}$ if and only if the hypersurface $\{f_F=0\}$ is singular
(with the empty case allowed).  Thus nondegeneracy at infinity is
precisely nondegeneracy relative to $P$ in the sense of
Adolphson--Sperber.  Since $P$ is full-dimensional in
$N_{P,\mathbb R}$, their Laurent concentration theorem
\cite[Theorem~4.13]{AdolphsonSperberTwisted}, specialized to their
parameter $\alpha=0$, gives the first vanishing in
\eqref{eq:intrinsic-laurent-concentration}.  The comparison
\eqref{eq:intro-qin-zhang-comparison} then gives the second.
\end{proof}

\begin{theorem}[Full degree-refined vanishing-cycle Newton formula]
\label{thm:global-vanishing-hstar-new}
Let $d=\rank N$, let $f\in\C[N]$ be nondegenerate at infinity, and put
$P=P_{f,\infty}$ and $r=\dim P$.  Then
\begin{align}
 E_{\Phi}^{\infty}(T_M,f;u,v,x)
 ={}&x^r(1+uvx)^{d-r}
 \sum_{F\in\widehat\Ess(P)}
 (uv)^{\dim F+1}(1-uv)^{r-\dim F-1}\notag\\
 &\hspace{2.1cm}\cdot
 h^*_{\Psi}
 (\widehat F,\nu_{\widehat F};u^{-1},v^{-1}).
 \label{eq:global-vanishing-hstar-new}
\end{align}
In particular,
\begin{align}
 E_{\Phi}^{\infty}(T_M,f;u,v,-1)
 ={}&(-1)^r(1-uv)^{d-r}
 \sum_{F\in\widehat\Ess(P)}
 (uv)^{\dim F+1}(1-uv)^{r-\dim F-1}\notag\\
 &\hspace{2.1cm}\cdot
 h^*_{\Psi}
 (\widehat F,\nu_{\widehat F};u^{-1},v^{-1}).
 \label{eq:global-vanishing-euler-new}
\end{align}
\end{theorem}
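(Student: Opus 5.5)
We first prove the Euler specialization \eqref{eq:global-vanishing-euler-new} at the character-valued level, and only then recover the full $x$-dependence from middle-degree concentration. Start from Theorem~\ref{thm:motivic-newton-full} and apply $\chi_{\Hdg}^{\widehat\mu}$, which is a ring homomorphism. Using \eqref{eq:motivic-nearby-realization-new}, this gives
\[
 \widetilde E_{\widehat\mu}^{\Psi,\infty}(T_M,f;u,v)
 =(-1)^{d-r}\sum_{F\in\Ess(P)}(1-uv)^{d-\dim F-1}\chi_{\Hdg}^{\widehat\mu}([V_F(f)]).
\]
Substitute \eqref{eq:face-hypersurface-hstar} for each face. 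This splits the right-hand side into a torus part $\sum_F (-1)^{\dots}(1-uv)^{d-\dim F-1}(uv-1)^{\dim F+1}(uv)^{-1}\mathbf e_0$ and an $h^*$ part. By Weisner's identity \eqref{eq:essential-weisner-new}, the torus part collapses to a single multiple of $(uv-1)^d(uv)^{-1}$, up to exactly the contribution of the empty face. Here we use the convention $h^*(\{0\},0)=\mathbf e_0$ from \eqref{eq:augmented-essential-set}; the case distinction $0\in\relint P$ versus $0\in\partial_{\mathrm{rel}}P$ is absorbed by this convention.

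Next, insert the result into \eqref{eq:equivariant-nearby-to-vanishing-new}. The ordinary term is
\[
 E_{\mathrm{ord}}(T_M;u,v)=(1-uv)^d,
\]
since $H^j(T_M)$ is pure of type $(j,j)$ with dimension $\binom dj$. After the substitution $(u,v)\mapsto(u^{-1},v^{-1})$, the prefactor $(uv)^{d-1}$, and the involution $\vee$, this term should cancel against the collapsed torus part. We verify this directly by tracking signs, including the factor $(-1)^{d-r}$ and the rewriting $(uv^{-1}\!-1)\to$ powers of $(1-uv)$. What survives is a sum over $\widehat\Ess(P)$ of
\[
 (uv)^{\dim F+1}(1-uv)^{d-\dim F-1}\,h^*(\widehat F,\nu_{\widehat F};u^{-1},v^{-1})^\vee,
\]
up to the sign $(-1)^r$. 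The empty-face term arises precisely as the residual of the Weisner cancellation. Applying $\operatorname{scal}_\Phi$ together with \eqref{eq:scalarization-duality}, in the form $\operatorname{scal}_\Phi(A(u^{-1},v^{-1})^\vee)=\operatorname{scal}_\Psi(A)(u^{-1},v^{-1})$, converts each term into $h^*_\Psi(\widehat F,\nu_{\widehat F};u^{-1},v^{-1})$. The prefactors are untwisted monomials in $uv$, so they commute with scalarization, which is linear over $\Z[u^{\pm1},v^{\pm1}]$. Writing $(1-uv)^{d-\dim F-1}=(1-uv)^{d-r}(1-uv)^{r-\dim F-1}$ then yields \eqref{eq:global-vanishing-euler-new}.

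To lift to the full $x$-variable, split $T_M\simeq T_{M_P}\times T_0$ with $f$ pulled back from $T_{M_P}$. By Künneth for the pair $(T_{M_P}\times T_0,\,(T_{M_P})_t\times T_0)$, the relative limit structure is the tensor product of that of $(T_{M_P},f)$ with the ordinary cohomology of $T_0$. The torus factor $T_0$ has trivial monodromy and classes of degree $j$, type $(j,j)$, limit weight $2j$, so its contribution is $(1+uvx)^{d-r}$. By Lemma~\ref{lem:laurent-middle-degree-concentration}, the intrinsic factor lives only in degree $r$, so its polynomial equals $x^r(-1)^r$ times its Euler specialization. Applying the rank-$r$ case of the Euler identity gives \eqref{eq:global-vanishing-hstar-new}, and specializing at $x=-1$ is consistent with it.

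The main obstacle is the sign and normalization bookkeeping in the Weisner cancellation: the torus terms from \eqref{eq:face-hypersurface-hstar} must cancel exactly against $E_{\mathrm{ord}}$, leaving precisely the empty-face summand. I would first check this in the full-dimensional case $r=d$ with $0\in\relint P$, and then treat the relative-boundary case separately, verifying that $\widehat\Ess$ omits the empty face exactly when the residual vanishes.
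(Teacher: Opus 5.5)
Your proposal is correct and follows essentially the same route as the paper. You realize the Newton formula, absorb the torus terms into the empty-face summand via Weisner's identity, and cancel against $E_{\mathrm{ord}}(T_M;u,v)=(1-uv)^d$ in the nearby/vanishing identity to reach the character-valued intermediate formula. You then scalarize using \eqref{eq:scalarization-duality} and restore the $x$-grading by middle-degree concentration on $T_{M_P}$ and K\"unneth with the complementary torus, which is exactly the paper's order of steps and matches its sign bookkeeping.
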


\begin{proof}
Apply $\chi_{\Hdg}^{\widehat\mu}$ to
\eqref{eq:newton-intrinsic-face} and use
\eqref{eq:face-hypersurface-hstar} on every essential face.  Separating
the torus terms from the weighted $h^*$-terms and using
\eqref{eq:essential-weisner-new} gives
\begin{align*}
 uv\,\widetilde E_{\widehat\mu}^{\Psi,\infty}(T_M,f;u,v)
 ={}&(uv-1)^d\mathbf e_0\\
 &-(-1)^r
 \sum_{F\in\widehat\Ess(P_{f,\infty})}
 (uv-1)^{d-\dim F-1}
 h^*(\widehat F,\nu_{\widehat F};u,v).
\end{align*}
The formal empty-face summand contributes zero before this separation,
so the augmentation does not change the motivic formula.

For the torus,
\[
 E_c(T_M;u,v)=(uv-1)^d\mathbf e_0,
 \qquad
 E_{\mathrm{ord}}(T_M;u,v)=(1-uv)^d\mathbf e_0.
\]
Substitution into the character-valued nearby/vanishing identity
\eqref{eq:equivariant-nearby-to-vanishing-new} cancels the universal
torus term.  Duality and grading reversal then give the intermediate
character-valued identity
\begin{align}
 \widetilde E_{\widehat\mu}^{\Phi,\infty}(T_M,f;u,v)
 ={}&(-1)^r
 \sum_{F\in\widehat\Ess(P_{f,\infty})}
 (uv)^{\dim F+1}(1-uv)^{d-\dim F-1}\notag\\
 &\hspace{1.7cm}\cdot
 h^*(\widehat F,\nu_{\widehat F};u^{-1},v^{-1})^\vee.
 \label{eq:equivariant-global-vanishing-hstar-new}
\end{align}
Finally, \eqref{eq:scalarization-duality} and
\eqref{eq:nearby-scalarized-hstar} give
\eqref{eq:global-vanishing-euler-new}.

It remains to restore the cohomological degree.  On the intrinsic
torus, Lemma~\ref{lem:laurent-middle-degree-concentration} shows that
the relative limit mixed Hodge structure occurs only in degree $r$.
Taking $d=r$ in \eqref{eq:global-vanishing-euler-new} therefore gives
\begin{align*}
 E_{\Phi}^{\infty}(T_{M_P},f;u,v,x)
 ={}&x^r\sum_{F\in\widehat\Ess(P)}
 (uv)^{\dim F+1}(1-uv)^{r-\dim F-1}\notag\\
 &\hspace{1.6cm}\cdot
 h^*_{\Psi}
 (\widehat F,\nu_{\widehat F};u^{-1},v^{-1}).
\end{align*}
Choose a splitting of the saturated inclusion $N_P\subseteq N$.
Then $T_M$ is the product of $T_{M_P}$ with a torus of dimension
$d-r$, and $f$ is pulled back from the intrinsic factor.  The full
ordinary Hodge--degree polynomial of the complementary torus is
$(1+uvx)^{d-r}$.  Since this factor has trivial monodromy,
degreewise K\"unneth is compatible with scalarization and gives
\eqref{eq:global-vanishing-hstar-new}.
\end{proof}

We next extract from the same formula the semisimple and unipotent
parts of monodromy.  For the operators fixed in
\eqref{eq:affine-and-limit-monodromy-conventions}, write
\begin{align}
 T_\Phi&=T_{\Phi,s}T_{\Phi,u},
 &N_\Phi&=(2\pi i)^{-1}\log T_{\Phi,u},\notag\\
 T_\infty&=T_{\infty,s}T_{\infty,u},
 &N_\infty&=(2\pi i)^{-1}\log T_{\infty,u}=-N_\Phi.
 \label{eq:normalized-monodromy-logarithms}
\end{align}

\begin{corollary}[Newton multiplicities for monodromy at infinity]
\label{cor:newton-monodromy-multiplicities}
In addition to the hypotheses of
Theorem~\ref{thm:global-vanishing-hstar-new}, suppose that $P$ is
full-dimensional and $0\in\operatorname{int}(P)$, and write
$Y_t=f^{-1}(t)$ for $|t|\gg0$.  Set
\begin{equation}
 V_{\mathbb C}:=
 H^d(T_M^{\an},Y_t^{\an};\mathbb C),
 \qquad
 V_\lambda:=\ker(T_{\infty,s}-\lambda\operatorname{id}).
 \label{eq:weak-fano-semisimple-eigenspace}
\end{equation}
Then
\begin{equation}
 \dim V_\lambda
 =\sum_{\substack{a\in\mathbb Q\\e^{-2\pi ia}=\lambda}}
   \sum_{c\in\Sigma_P}
   n_{c,a}\dim\IH_c^{2\bullet}.
 \label{eq:weak-fano-eigenvalue-multiplicity-local}
\end{equation}
Moreover, if $J_{k,\lambda}$ denotes the number of Jordan blocks of
size $k$ of $N_\infty$ on $V_\lambda$, then
\begin{equation}
 J_{k,\lambda}
 =\sum_{\substack{a\in\mathbb Q\\e^{-2\pi ia}=\lambda}}
   \sum_{\substack{c\in\Sigma_P,\;
                    0\leq j\leq\lfloor m_c/2\rfloor\\
                    m_c-2j+1=k}}
   n_{c,a}b_{c,j}.
 \label{eq:weak-fano-jordan-block-number}
\end{equation}
\end{corollary}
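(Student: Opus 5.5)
Since $r=d$ and $0\in\operatorname{int}(P)$, Lemma~\ref{lem:laurent-middle-degree-concentration} puts all relative cohomology in degree $d$. So $V_{\mathbb C}$ is the whole relative limit object, and $E_\Phi^\infty(T_M,f;u,v,x)=x^d\,\Phi(u,v)$, where $\Phi$ is the full Hodge--weight--eigenvalue generating polynomial of $V_{\mathbb C}$. Moreover $\widehat\Ess(P)$ consists of all faces together with $\varnothing$, so Theorem~\ref{thm:global-vanishing-hstar-new} gives $\Phi$ as a sum over all cones $c_F\in\Sigma_P$ of $(uv)^{\dim c_F}(1-uv)^{d-\dim c_F}h^*_\Psi(\widehat F,\nu_{\widehat F};u^{-1},v^{-1})$. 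I will work with the character-valued form \eqref{eq:equivariant-global-vanishing-hstar-new} rather than its scalarization. In that form the exponent $\alpha$ of the character $\mathbf e_{-\alpha}$ records the eigenvalue $e^{-2\pi i\alpha}$ of $T_\Phi$. By \eqref{eq:affine-and-limit-monodromy-conventions} this is the eigenvalue $e^{2\pi i\alpha}$ of $T_\infty$, and I will check carefully that the convention $\lambda=e^{-2\pi ia}$ in the statement is compatible with it.

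The key step replaces each $h^*(\widehat F,\nu_{\widehat F};u,v)$ by its weighted equivariant local factorization \eqref{eq:limit-mixed-hstar-local-factorization} \cite[Corollary~6.13]{StapledonMonodromy}, taken on the radial subdivision of $\widehat F$ by the pyramids $\widehat G$, $G\preceq F$. This gives a sum of terms $v^{\dim c+\ldots}\,l^*_{\psi_c}(c;uv^{-1})$ (character-valued) times pure link $h$-polynomials in $uv$. I then interchange sums as in the derivation of \eqref{eq:radial-weighted-local-decomposition}, now keeping both variables. The pure parts, together with the factors $(uv)^{\dim c_F}(1-uv)^{d-\dim c_F}$, recombine through Lemma~\ref{lem:radial-relative-link} in the variable $uv$. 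The target is
\[
 \Phi(u,v)=\sum_{c\in\Sigma_P}\sum_a n_{c,a}\,\mathbf e_{\pm a}\,(\text{monomial in }u,v\text{ shifted by }a)\,h^{\mathrm{ord}}(\Sigma_P/c;uv),
\]
up to an overall grading reversal. The main obstacle is exactly this bookkeeping: showing that the limit-weight variable $v$ enters only through the symmetric factor $h^{\mathrm{ord}}(\Sigma_P/c;uv)$, centred at weight $m_c$ relative to its local contribution, while the local term $l^*$ carries a fixed weight offset. I would first verify this on a simplex $P$, where $\Sigma_P$ is simplicial and the $g$-polynomials are trivial, and then run the general interchange.

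Granting this form, setting $u=v=1$ in the $\mathbf e$-graded polynomial gives
\[
 \dim V_\lambda=\sum_{e^{-2\pi ia}=\lambda}\sum_c n_{c,a}\,h^{\mathrm{ord}}(\Sigma_P/c;1),
\]
and $h^{\mathrm{ord}}(\Sigma_P/c;1)=\dim\IH_c^{2\bullet}$, which proves \eqref{eq:weak-fano-eigenvalue-multiplicity-local}.

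For the Jordan blocks, substitute \eqref{eq:weak-fano-support-lefschetz-polynomial} for $h^{\mathrm{ord}}(\Sigma_P/c;uv)$. Within the summand for a fixed eigenvalue, Hodge level $p$ and cone $c$, the weights then form strings $(uv)^j(1+uv+\cdots+(uv)^{m_c-2j})$. For the limit mixed Hodge structure on each generalized eigenspace, $W_{\lim}$ is the monodromy weight filtration of $N_\infty$, centred at the appropriate weight, and $N_\infty$ is a morphism of type $(-1,-1)$. Hence the number of Jordan blocks of size $k$ equals the number of primitive classes: the classes of weight $w$ minus the classes of weight $w+2$, summed over the centre-symmetric weights. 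A weight-symmetric polynomial is determined by its decomposition into centred strings, and that decomposition is unique. So I need only show that each string above is centred at the correct weight, which follows from the symmetry of $h^{\mathrm{ord}}$ (hard Lefschetz) together with the fixed offset established in the key step. Each string of length $m_c-2j+1$ then corresponds to exactly one Jordan block of that size, and counting strings gives \eqref{eq:weak-fano-jordan-block-number}. Since $N_\infty=-N_\Phi$, the block structure is the same for both logarithms.
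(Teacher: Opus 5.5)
Your proposal is correct and follows essentially the same route as the paper. The shared chain is middle-degree concentration, then the Newton formula of Theorem~\ref{thm:global-vanishing-hstar-new} combined with the weighted equivariant local factorization and the radial relative-link recombination, then $u=v=1$ for the eigenvalue multiplicities, and finally the centred monodromy-weight filtration for the Jordan blocks. Your uniqueness-of-centred-strings argument is equivalent to the paper's computation of $\dim\operatorname{Gr}^{W_{\lim}}_{\nu_\lambda+r}V_\lambda-\dim\operatorname{Gr}^{W_{\lim}}_{\nu_\lambda+r+2}V_\lambda$. The two-variable bookkeeping you grant is also only asserted, not derived, in the paper. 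Note, however, that the eigenvalue count needs only the already established $v=1$ identity \eqref{eq:radial-weighted-local-decomposition}, so that bookkeeping is required only for the Jordan-block part.
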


\begin{proof}
Fix a base point $t$ with $|t|\gg0$.  Parallel transport of the
pair $(T_M^{\an},Y_t^{\an})$ around a counterclockwise large circle
in the $t$-plane defines $T_\infty$ on $V_{\mathbb C}$.  Hence
\begin{equation}
 \det(z-T_{\infty,s})
 =\prod_\lambda(z-\lambda)^{\dim V_\lambda}.
 \label{eq:weak-fano-semisimple-characteristic-by-eigenspace}
\end{equation}
Choose the unique $\alpha\in[0,1)$ such that
$\lambda^{-1}=e^{-2\pi i\alpha}$.  Since
$T_\Phi=T_\infty^{-1}$, $V_\lambda$ is the generalized
$e^{-2\pi i\alpha}$-eigenspace for $T_\Phi$.  The definition of
the two finite filtrations, Lemma~\ref{lem:laurent-middle-degree-concentration},
Theorem~\ref{thm:global-vanishing-hstar-new}, and the weighted
equivariant form of
\eqref{eq:limit-mixed-hstar-local-factorization} give
\begin{align}
 \dim V_\lambda
 &=\left.
   \sum_{\rho\in\alpha+\mathbb Z}
   [u^\rho]E_\Phi^\infty(T_M,f;u,v,x)\,u^\rho
   \right|_{u=v=x=1}                                           \notag\\
 &=\sum_{c\in\Sigma_P}
   \left(
    \sum_{\substack{a\in\mathbb Q\\e^{-2\pi ia}=\lambda}}
    [t^a]l^*_{\psi_c}(c;t)
   \right)
   h^{\mathrm{ord}}(\Sigma_P/c;1)                              \notag\\
 &=\sum_{\substack{a\in\mathbb Q\\e^{-2\pi ia}=\lambda}}
   \sum_{c\in\Sigma_P}
   n_{c,a}\dim\IH_c^{2\bullet}.
 \label{eq:weak-fano-eigenvalue-multiplicity-from-E}
\end{align}
Here $[u^\rho]$ and $[t^a]$ denote coefficient extraction.  The
first line retains precisely the $u$-exponents with fractional part
$\alpha$, hence the $T_\Phi$-eigenvalue $\lambda^{-1}$; the second
retains precisely the local ages with $T_\infty$-eigenvalue
$\lambda$.  This proves
\eqref{eq:weak-fano-eigenvalue-multiplicity-local}.

We now use $r\geq0$ as the distance from the center of the weight
filtration; the dimension of the full-dimensional polytope is $d$.
For $a\in\mathbb Q$, define the numerical age-$a$ contribution
\begin{equation}
 w_{a,r}
 :=\sum_{\substack{c\in\Sigma_P,\ q\geq0\\m_c-2q=r}}
 n_{c,a}\dim\IH_c^{2q}.
 \label{eq:weak-fano-age-weight-contribution}
\end{equation}
Put
\[
 \nu_\lambda:=
 \begin{cases}
  d,&\lambda=1,\\
  d-1,&\lambda\ne1.
 \end{cases}
\]
Coefficient extraction from the same limit-Hodge formula gives
\begin{align}
 \dim\operatorname{Gr}_{\nu_\lambda+r}^{W_{\lim}}V_\lambda
 &=\sum_{\substack{p,\ell\in\mathbb Z\\
                    p+\ell=\nu_\lambda+r}}
   [u^{p+\alpha}v^\ell x^d]
   E_\Phi^\infty(T_M,f;u,v,x)                                  \notag\\
 &=\sum_{\substack{a\in\mathbb Q\\e^{-2\pi ia}=\lambda}}
   w_{a,r}.
 \label{eq:weak-fano-weight-dimension-from-E}
\end{align}
On $V_\lambda$, $W_{\lim}$ is the monodromy-weight filtration of
$N_\Phi$, centered at $\nu_\lambda$; see
\cite[Theorem~6.12]{StapledonMonodromy} and
\cite{sabbah1997monodromy,sabbah2018some,WangNewtonHodge}.
In particular,
\[
 N_\Phi^r:\operatorname{Gr}_{\nu_\lambda+r}^{W_{\lim}}V_\lambda
 \xrightarrow{\sim}
 \operatorname{Gr}_{\nu_\lambda-r}^{W_{\lim}}V_\lambda.
\]
For a nilpotent operator with this centered weight filtration, the
number of blocks of size $r+1$ is the dimension of the upper graded
piece at distance $r$ minus that at distance $r+2$.  By
\eqref{eq:weak-fano-weight-dimension-from-E}, this number is
\begin{equation}
 \sum_{\substack{a\in\mathbb Q\\e^{-2\pi ia}=\lambda}}
 \left(w_{a,r}-w_{a,r+2}\right).
 \label{eq:monodromy-weight-jordan-difference}
\end{equation}
For each fixed $a$, with the convention $\IH_c^{-2}=0$,
\begin{align}
 w_{a,r}-w_{a,r+2}
 &=\sum_{\substack{c\in\Sigma_P,
                    0\leq j\leq\lfloor m_c/2\rfloor\\
                    m_c-2j=r}}
 n_{c,a}\left(\dim\IH_c^{2j}-\dim\IH_c^{2j-2}\right)
                                                                    \notag\\
 &=\sum_{\substack{c\in\Sigma_P,
                    0\leq j\leq\lfloor m_c/2\rfloor\\
                    m_c-2j=r}}
 n_{c,a}b_{c,j}.
 \label{eq:weak-fano-jordan-blocks-from-weight-difference}
\end{align}
The last equality is the degree-$j$ coefficient difference in
\eqref{eq:weak-fano-support-lefschetz-polynomial}.  Substitution in
\eqref{eq:monodromy-weight-jordan-difference}, followed by
$k=r+1$, gives \eqref{eq:weak-fano-jordan-block-number}.  Finally,
$N_\infty=-N_\Phi$ by
\eqref{eq:normalized-monodromy-logarithms}, and multiplication by
$-1$ does not change Jordan-block sizes.
\end{proof}

\begin{corollary}[Full degree-refined nearby-cycle Newton formula]
\label{cor:full-nearby-newton-formula}
Retain the hypotheses of
Theorem~\ref{thm:global-vanishing-hstar-new}, and assume $r>0$.
Then
\begin{align}
 E_{\Psi}^{\infty}(T_M,f;u,v,x)
 ={}&\bigl[x(1+uvx)\bigr]^{d-r}(uvx^2)^{r-1}\notag\\
 &\quad\cdot\Bigl[
 (1+u^{-1}v^{-1}x^{-1})^r
 -(u^{-1}v^{-1}x^{-1})^r\notag\\
 &\hspace{2.4cm}
 +x^{1-r}\bigl(
 E_{\Phi}^{\infty}(T_{M_P},f;u^{-1},v^{-1},1)
 -(uv)^{-r}\bigr)\Bigr].
 \label{eq:full-nearby-newton-formula}
\end{align}
\end{corollary}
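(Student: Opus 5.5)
The plan is to compute the degree-refined nearby polynomial on the intrinsic torus $T_{M_P}$ of dimension $r$, and then pass to $T_M$ by the Künneth formula. We split $N\simeq N_P\oplus N_0$ as in the proof of Theorem~\ref{thm:global-vanishing-hstar-new}. Then $Y_t\simeq (T_{M_P})_t\times\mathbb G_m^{d-r}$, and the second factor carries trivial monodromy and a constant pure Hodge--Tate structure. Since $H_c^1(\mathbb G_m)$ has weight $0$ and $H_c^2(\mathbb G_m)$ has type $(1,1)$, the compactly supported Hodge--degree polynomial of $\mathbb G_m$ is $x+uvx^2=x(1+uvx)$. Degreewise Künneth is compatible with $\operatorname{scal}_\Psi$ because that factor has trivial character. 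This produces the prefactor $[x(1+uvx)]^{d-r}$ and reduces everything to the case $d=r$.

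On $T:=T_{M_P}$ with $T_t:=(T_{M_P})_t$, we use the long exact sequence of the pair $(T,T_t)$ as a sequence of limit mixed Hodge structures with monodromy. The terms $H^j(T)$ form a constant variation, so their limit is $H^j(T)$ itself: it is pure of type $(j,j)$ with trivial monodromy and has dimension $\binom rj$. Lemma~\ref{lem:laurent-middle-degree-concentration} gives $H^k(T,T_t)=0$ for $k\neq r$. Since $T_t$ is a smooth affine $(r-1)$-fold, Artin vanishing gives $H^j(T_t)=0$ for $j>r-1$. Hence:
\begin{itemize}
\item the restriction $H^j(T)\to H^j(T_t)$ is an isomorphism for $j\le r-2$;
\item in degree $r-1$ there is an exact sequence $0\to H^{r-1}(T)\to H^{r-1}(T_t)\to H^r(T,T_t)\to H^r(T)\to 0$.
\end{itemize}
In the Grothendieck group of limit structures this gives
\[
[H^{r-1}(T_t)]=[H^{r-1}(T)]+[H^r(T,T_t)]-[H^r(T)].
\]
Next we apply limit Poincar\'e duality,
\[
H_c^{2r-2-j}(T_t)\cong H^j(T_t)^\vee(-(r-1)),
\]
which inverts $u,v$, multiplies by $(uv)^{r-1}$, and replaces the $T_\Phi$-eigenvalue convention by the dual nearby convention. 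The latter step is exactly \eqref{eq:scalarization-duality}, which converts $\operatorname{scal}_\Phi$ of a dual into $\operatorname{scal}_\Psi$ with inverted variables.

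We now collect terms. For $0\le j\le r-1$, the classes $H^j(T)$ contribute
\[
\binom rj (uv)^{r-1-j}x^{2r-2-j}=(uvx^2)^{r-1}\binom rj(uvx)^{-j}.
\]
Summing over these $j$ gives $(uvx^2)^{r-1}\bigl[(1+u^{-1}v^{-1}x^{-1})^r-(u^{-1}v^{-1}x^{-1})^r\bigr]$. The remaining classes $[H^r(T,T_t)]-[H^r(T)]$ land in compact degree $r-1$, i.e.\ $x^{r-1}=(x^2)^{r-1}x^{1-r}$. After duality they give
\[
(uv)^{r-1}x^{r-1}\bigl(E_\Phi^\infty(T,f;u^{-1},v^{-1},1)-(uv)^{-r}\bigr).
\]
Here $x=1$ is legitimate because the relative object lives only in degree $r$, whose $x^r$ has been stripped. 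This reproduces the bracket in \eqref{eq:full-nearby-newton-formula}. The hypothesis $r>0$ ensures that $T_t$ has the expected dimension $r-1\ge0$ and that the degree ranges above are meaningful.

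The main obstacle is the justification of the Hodge-theoretic inputs. The long exact sequence of the pair and the duality isomorphism must be compatible simultaneously with $F_{\lim}$, $W_{\lim}$ and the (generalized) eigenspace decomposition. In particular, the weight shift in the duality must be exactly $(r-1)$ on every eigenspace, including $\lambda\ne1$. I would cite Saito's mixed Hodge module formalism, in which $\psi$ commutes with duality up to Tate twist and the relevant maps are morphisms of mixed Hodge modules. I would also double-check the sign of the eigenvalue exponent by comparing the case $r=1$ against \eqref{eq:equivariant-nearby-to-vanishing-new} at $x=-1$.
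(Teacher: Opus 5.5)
Your proposal is correct and follows essentially the same route as the paper's proof. The shared steps are the long exact sequence of $(T_{M_P},Y_t)$ combined with middle-degree concentration and affine vanishing, with strictness and the type-$(j,j)$ classes $H^j(T_{M_P})$ giving the ordinary fibre polynomial, then limit Poincar\'e duality in dimension $r-1$, and finally compactly supported K\"unneth with the factor $x(1+uvx)$ for each $\mathbb C^*$; the only difference is that you perform the K\"unneth reduction first rather than last.
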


\begin{proof}
First work on $T_{M_P}$ and write $Y_t=f^{-1}(t)$ for a smooth fibre
near infinity.  By Lemma~\ref{lem:laurent-middle-degree-concentration}
and the long exact sequence of $(T_{M_P},Y_t)$,
\[
 H^j(Y_t)\simeq H^j(T_{M_P})\quad(j<r-1),
\]
while in degree $r-1$ there is an exact sequence
\[
 0\longrightarrow H^{r-1}(T_{M_P})
 \longrightarrow H^{r-1}(Y_t)
 \longrightarrow H^r(T_{M_P},Y_t)
 \longrightarrow H^r(T_{M_P})\longrightarrow0.
\]
The smooth affine $(r-1)$-fold $Y_t$ has no ordinary cohomology above
degree $r-1$.  Strictness for limit mixed Hodge structures, together
with $H^j(T_{M_P})$ of type $(j,j)$, shows that the ordinary
degree-refined polynomial of $Y_t$ in the $T_\Phi$-monodromy
convention is
\begin{equation}
 (1+uvx)^r-(uvx)^r
 +x^{r-1}\bigl(
 E_{\Phi}^{\infty}(T_{M_P},f;u,v,1)-(uv)^r\bigr).
 \label{eq:intrinsic-ordinary-fiber-expression}
\end{equation}

Limit Poincar\'e duality on $Y_t$ has dimension $r-1$, inverts the
monodromy, and transforms
\eqref{eq:intrinsic-ordinary-fiber-expression} into the bracketed
factor of \eqref{eq:full-nearby-newton-formula}, multiplied by
$(uvx^2)^{r-1}$.
Finally, the ambient fibre is the product of $Y_t$ with
$(\mathbb C^*)^{d-r}$.  The compactly supported Hodge--degree
polynomial of one copy of $\mathbb C^*$ is
$x+uvx^2=x(1+uvx)$, so compactly supported K\"unneth gives
\eqref{eq:full-nearby-newton-formula}.
\end{proof}

\begin{corollary}[Newton--orbifold specialization]
\label{prop:newton-stringy-orbifold-new}
For every $f\in\C[N]$ nondegenerate at infinity with
$P_{f,\infty}=P$,
\begin{equation}
 E_{\Phi}^{\infty}(T_M,f;u,1,x)
 =u^rx^r(1+ux)^{d-r}
 P_{\mathrm{orb}}(P;u^{-1}).
 \label{eq:weak-fano-degree-refined-new}
\end{equation}
\end{corollary}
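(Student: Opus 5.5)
Set $v=1$ in the degree-refined formula \eqref{eq:global-vanishing-hstar-new} of Theorem~\ref{thm:global-vanishing-hstar-new}. This gives
\[
 E_{\Phi}^{\infty}(T_M,f;u,1,x)
 =x^r(1+ux)^{d-r}\sum_{F\in\widehat\Ess(P)}u^{\dim F+1}(1-u)^{r-\dim F-1}
 h^*_{\Psi}(\widehat F,\nu_{\widehat F};u^{-1},1).
\]
The task is then to recognize the sum as $u^rP_{\mathrm{orb}}(P;u^{-1})$. By \eqref{eq:face-pyramid-cone-hstar}, each factor $h^*_\Psi(\widehat F,\nu_{\widehat F};t,1)$ equals the one-variable weighted cone polynomial $h^*_{\psi_{c_F}}(c_F;t)$. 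For the empty face this is the convention $h^*_\Psi(\{0\},0;u,v)=1=h^*_{\psi_{\{0\}}}(\{0\};t)$, consistent with $c_\varnothing=\{0\}$.

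Next, put $t=u^{-1}$ and note that $\dim c_F=\dim F+1$. Then
\[
 u^{\dim F+1}(1-u)^{r-\dim F-1}=u^r(t-1)^{r-\dim c_F}.
\]
To verify: $(1-u)^{r-\dim c_F}=u^{r-\dim c_F}(u^{-1}-1)^{r-\dim c_F}$, so the $u$-powers combine to $u^r$. The sum therefore becomes
\[
 u^r\sum_{F\in\widehat\Ess(P)}(t-1)^{r-\dim c_F}h^*_{\psi_{c_F}}(c_F;t),
\]
and the radial weighted local decomposition \eqref{eq:radial-weighted-local-decomposition} identifies this with $u^rP_{\mathrm{orb}}(P;t)$. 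This yields \eqref{eq:weak-fano-degree-refined-new}.

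The main point requiring care is the specialization of \eqref{eq:face-pyramid-cone-hstar}. It is stated for $h^*_\Psi(\cdot;t,1)$, and we need it at $(u^{-1},1)$. The scalarization $\operatorname{scal}_\Psi$ is additive and commutes with substituting $v=1$, so we may first specialize the character-valued $h^*$ at $v=1$, where it reduces to the weighted Ehrhart numerator, and then substitute $u\mapsto u^{-1}$ monomially. Both $h^*_\Psi(\cdot;u,1)$ and $h^*_{\psi_c}(c;u)$ live in $\Z[u^{\Q}]$, and the identity holds there as polynomials, so the substitution is legitimate.

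The second point is the range of faces. The radial fan \eqref{eq:canonical-radial-face-fan} is built on $N_P$ of rank $r$, which is why the exponent $r-\dim c_F$ appears. The augmented set $\widehat\Ess(P)$ is exactly the index set of \eqref{eq:radial-weighted-local-decomposition}, including the empty face precisely when $0\in\relint(P)$. This matches the summation set in Theorem~\ref{thm:global-vanishing-hstar-new}, so no boundary correction is needed.
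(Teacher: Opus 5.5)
Your proposal is correct and is essentially the paper's proof: specialize \eqref{eq:global-vanishing-hstar-new} at $v=1$, rewrite $u^{\dim F+1}(1-u)^{r-\dim F-1}$ as $u^r(t-1)^{r-\dim c_F}$ with $t=u^{-1}$, and then apply \eqref{eq:face-pyramid-cone-hstar} and \eqref{eq:radial-weighted-local-decomposition}. Your additional remarks on substituting $t=u^{-1}$ into the scalarized polynomial and on the empty-face term are consistent with the paper's conventions.
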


\begin{proof}
Put $t=u^{-1}$.
For $F\in\widehat\Ess(P)$, write $c_F=\R_{\geq0}F$.
Since
\[
 u^{\dim c_F}(1-u)^{r-\dim c_F}
 =u^r(t-1)^{r-\dim c_F},
\]
equations \eqref{eq:face-pyramid-cone-hstar} and
\eqref{eq:radial-weighted-local-decomposition} give
\begin{align*}
 &\sum_{F\in\widehat\Ess(P)}
 u^{\dim F+1}(1-u)^{r-\dim F-1}
 h^*_{\Psi}
 (\widehat F,\nu_{\widehat F};u^{-1},1)\\
 &\hspace{4cm}=u^rP_{\mathrm{orb}}(P;u^{-1}).
\end{align*}
The result is now the specialization $v=1$ of
\eqref{eq:global-vanishing-hstar-new}.
\end{proof}

When $P$ is full-dimensional and $0\in\operatorname{int}(P)$,
Corollary~\ref{cor:newton-monodromy-multiplicities} extracts the
semisimple and unipotent monodromy data from the full two-variable
Newton expression.  Proposition~\ref{prop:weak-fano-pure-sheaf-decomposition}
realizes the same multiplicities in the compact-side age--Lefschetz
decomposition.

\begin{corollary}[Irregular Hodge specialization]
\label{cor:newton-irregular-hodge-new}
Under the hypotheses of
Corollary~\ref{prop:newton-stringy-orbifold-new},
\begin{equation}
 \mathcal F(T_M,f;X,Y)
 =X^r(1+X)^{d-r}
 P_{\mathrm{orb}}(P;Y/X).
 \label{eq:weak-fano-polynomial-new}
\end{equation}
\end{corollary}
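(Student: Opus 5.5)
The plan is to combine the Qin--Zhang recovery formula \eqref{eq:intro-irregular-recovery} with the Newton--orbifold specialization of Corollary~\ref{prop:newton-stringy-orbifold-new}. By \eqref{eq:intro-irregular-recovery}, which holds for an arbitrary regular function on a smooth quasi-projective variety and in particular for $f$ on $T_M$,
\[
 \mathcal F(T_M,f;X,Y)=E_{\Phi}^{\infty}(T_M,f;X/Y,1,Y).
\]
The right-hand side is the degree-refined polynomial at $v=1$. This is exactly the specialization computed in \eqref{eq:weak-fano-degree-refined-new}, so no Euler-level information is lost and middle-degree concentration is not needed separately.

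Substituting $u=X/Y$ and $x=Y$ into \eqref{eq:weak-fano-degree-refined-new} gives:
\[
 u^rx^r=(X/Y)^rY^r=X^r,\qquad (1+ux)^{d-r}=(1+X)^{d-r},\qquad P_{\mathrm{orb}}(P;u^{-1})=P_{\mathrm{orb}}(P;Y/X).
\]
Multiplying these three factors yields \eqref{eq:weak-fano-polynomial-new}.

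The only point requiring care is that the substitution is legitimate on the polynomial level. The variable $u$ carries rational exponents in $\mathscr R_x$, and \eqref{eq:intro-irregular-recovery} must be read with the same conventions: the $u$-exponent $p+\alpha$ becomes $\lambda$ and the $x$-exponent $k$ becomes $\lambda+\mu$, so $X^{p+\alpha}Y^{k-p-\alpha}$ matches $f^{\lambda,\mu}X^\lambda Y^\mu$. I would check this once directly from \eqref{eq:intro-qin-zhang-comparison} and the definition \eqref{eq:intro-degree-refined-vanishing}.

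The step I expect to need the most attention is confirming that the scalarization convention $\operatorname{scal}_\Phi$, with eigenvalue $\exp(-2\pi i\alpha)$ for $T_\Phi$, agrees with the one in the Qin--Zhang comparison. Once that is verified, the corollary is a formal substitution.
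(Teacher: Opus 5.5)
Your proposal is correct and is exactly the paper's argument: the paper obtains the corollary by substituting $u=X/Y$, $v=1$, $x=Y$ from \eqref{eq:intro-irregular-recovery} into \eqref{eq:weak-fano-degree-refined-new}, with no further input. Your exponent check that $u^{p+\alpha}x^k$ becomes $X^{p+\alpha}Y^{k-p-\alpha}$, matching $f^{\lambda,\mu}X^\lambda Y^\mu$, is right. You are also right that middle-degree concentration is not needed again, since it was already used in Theorem~\ref{thm:global-vanishing-hstar-new}.
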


\begin{proof}
By \eqref{eq:intro-irregular-recovery} and
\eqref{eq:weak-fano-degree-refined-new},
\begin{align*}
 \mathcal F(T_M,f;X,Y)
 &=E_{\Phi}^{\infty}(T_M,f;X/Y,1,Y)\\
 &=X^r(1+X)^{d-r}P_{\mathrm{orb}}(P;Y/X).
\end{align*}
\end{proof}

\section{The weak Fano mirror comparison}
\label{sec:weak-fano-comparison}

We first recall the stringy notation used in this section.  For a
smooth Deligne--Mumford Landau--Ginzburg model $(\mathcal X,g)$, write
$I\mathcal X=\coprod_{\mathfrak g}\mathcal X_{\mathfrak g}$, let
$a_{\mathfrak g}$ be the age of the component, and let
$g_{\mathfrak g}$ be the restricted potential.  Its orbifold
irregular Hodge polynomial is
\begin{equation}
 \Forb(\mathcal X,g;X,Y)
 :=\sum_{\mathfrak g}(XY)^{a_{\mathfrak g}}
 \mathcal F(\mathcal X_{\mathfrak g},g_{\mathfrak g};X,Y).
 \label{eq:orbifold-irregular-hodge-body}
\end{equation}
This is the invariant used by Harder--Lee for smooth stacky Clarke
pairs \cite{HarderLee2024}.

Let $\bSigma=(\Sigma,\{b_\rho\})$ be a quasi-projective stacky
log-$\mathbb Q$-Gorenstein fan with convex radial support in a lattice
$L$.  Its radial stacky support is
\begin{equation}
 P_{\bSigma}
 :=\bigcup_{c\in\Sigma}
 \operatorname{Conv}
 \bigl(\{0\}\cup\{b_\rho:\rho\preceq c\}\bigr).
 \label{eq:radial-stacky-support-body}
\end{equation}
For a toric Landau--Ginzburg datum $(\bSigma,g)$, choose a projective
ray-preserving crepant simplicial subdivision
$\boldsymbol\Gamma\to\bSigma$ and let $U_{\boldsymbol\Gamma}$ be the
resulting smooth toric Deligne--Mumford model.  The companion article
proves that
\begin{equation}
 \Fst(\bSigma,g;X,Y)
 :=\Forb(U_{\boldsymbol\Gamma},g;X,Y)
 \label{eq:stringy-irregular-hodge-body}
\end{equation}
is independent of the chosen subdivision
\cite[Theorem~7.1]{StringyICarr}; it therefore extends \eqref{eq:orbifold-irregular-hodge-body}
to possibly nonsimplicial stacky fans.

Building on Clarke's dual-fan construction \cite{ClarkeDualFans}, data
$(\bSigma,g)$ and $(\cbSigma,\check g)$ in dual lattices form a
stacky Clarke mirror pair
when
\begin{equation}
 P_{g,\infty}=P_{\cbSigma},
 \qquad
 P_{\check g,\infty}=P_{\bSigma}.
 \label{eq:clarke-polytope-pair-body}
\end{equation}
The stringy extension of the Harder--Lee theorem is
\begin{equation}
 \Fst(\bSigma,g;X,Y)
 =X^d\Fst(\cbSigma,\check g;X^{-1},Y)
 \label{eq:stringy-mirror-body}
\end{equation}
when the ambient lattices have rank $d$
\cite[Corollary~7.2]{StringyICarr}.  We shall recover its weak Fano
specialization from the motivic Newton calculation.

\subsection{The semi-projective stringy realization}
\label{subsec:stringy-orbifold-realization}

Let $\bSigma=(\Sigma,\{b_\rho\})$ be a quasi-projective stacky
log-$\mathbb Q$-Gorenstein fan with convex radial support in a lattice
$L$ of rank $d$, and put
\[
 P:=P_{\bSigma},
 \qquad
 L_\Sigma:=L\cap\Span_{\mathbb R}|\Sigma|,
 \qquad r:=\rank L_\Sigma=\dim P.
\]

\begin{proposition}[Stringy realization]
\label{prop:stringy-orbifold-realization-new}
With the notation above,
\begin{equation}
 \Fst(\bSigma,0;X,Y)
 =(1+X)^{d-r}P_{\mathrm{orb}}(P;XY).
 \label{eq:stringy-zero-potential-new}
\end{equation}
If $\boldsymbol\Gamma\to\bSigma$ is a projective crepant simplicial
subdivision, then
\begin{equation}
 P_{\mathrm{orb}}(P;t)
 =\sum_{q\in\mathbb Q}
 \dim H_{\mathrm{orb}}^{2q}
 (\mathcal X_{\boldsymbol\Gamma,L_\Sigma};\mathbb C)t^q.
 \label{eq:crepant-subdivision-orbifold-new}
\end{equation}
In particular, the right-hand side is independent of the chosen
crepant simplicial subdivision.
\end{proposition}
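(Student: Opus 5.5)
We first reduce to the full-rank case and then compute the orbifold cohomology sector by sector. Choose a projective crepant simplicial subdivision $\boldsymbol\Gamma\to\bSigma$ with the same radial support, and split $L\simeq L_\Sigma\oplus L_0$. Since $|\Sigma|\subset L_{\Sigma,\mathbb R}$, the toric stack $U_{\boldsymbol\Gamma}$ is $\mathcal X_{\boldsymbol\Gamma,L_\Sigma}\times\Gm^{d-r}$. Every twisted sector is also a product with $\Gm^{d-r}$, and the ages are unchanged. With zero potential, each sector contributes its ordinary Hodge polynomial. The complementary torus factor $\Gm$ contributes $1+X$ in the normalization of $\mathcal F$, since $H^1(\Gm)$ has type $(1,1)$ with $\lambda=1$, $\mu=0$. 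Because $g=0$, the irregular filtration is the ordinary Hodge filtration, and $\mathcal F$ of a Hodge--Tate sector is $\sum_j\dim H^{2j}X^jY^j$. Hence
\[
 \Fst(\bSigma,0;X,Y)=(1+X)^{d-r}\sum_{q}\dim H^{2q}_{\mathrm{orb}}(\mathcal X_{\boldsymbol\Gamma,L_\Sigma})(XY)^q .
\]
So both assertions reduce to \eqref{eq:crepant-subdivision-orbifold-new}.

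To prove \eqref{eq:crepant-subdivision-orbifold-new}, we use the Borisov--Chen--Smith description of the inertia stack. The twisted sectors are indexed by $\Boxop$ elements $v\in\relint(\sigma)\cap L_\Sigma$ for $\sigma\in\Gamma$, with age $\psi_P(v)<1$. The sector attached to $v$ is the toric stack of the quotient fan $\Gamma/\sigma$. Its cohomology is pure Hodge--Tate, so its Poincaré polynomial in $t=XY$ is $h^{\mathrm{ord}}(\Gamma/\sigma;t)$. This identification requires the semi-projective case: $\Gamma$ is projective with convex support, so the cohomology equals the fan $h$-polynomial. We then get
\[
 \sum_q\dim H^{2q}_{\mathrm{orb}}t^q=\sum_{\sigma\in\Gamma}\Bigl(\sum_{v\in\Boxop^\circ(\sigma)}t^{\psi_P(v)}\Bigr)h^{\mathrm{ord}}(\Gamma/\sigma;t).
\]
The inner sum is the weighted local $h^*$/$l^*$ of the simplicial cone $\sigma$ relative to the trivial subdivision.

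Next we pass from $\Gamma$ to the radial face fan $\Sigma_P$, which is where subdivision independence enters. Crepancy means $\psi_P$ is linear on each cone of $\Sigma_P$, and every cone of $\Gamma$ lies in a unique minimal cone $c\in\Sigma_P$. We group the sum by $c$. The weighted version of the Katz--Stapledon local decomposition applies to the subdivision $\Gamma|_c$ of $c$. It says that the sum over cones $\sigma$ of $\Gamma$ with $\sigma^\circ\subset c^\circ$ of the $\Boxop$ contributions times the relative link $h$-polynomials equals $l^*_{\psi_c}(c;t)$ times a correction. The support factors of the $\Gamma/\sigma$ then recombine, by the decomposition theorem for the proper toric map $\Gamma\to\Sigma_P$, into the intersection cohomology $h^{\mathrm{ord}}(\Sigma_P/c;t)$. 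Concretely, I would use the $h^*$ identity \eqref{eq:weighted-local-decomposition-new} on each cone $c$, applied to $h^*_{\psi_c}(c;t)=\sum_{\sigma\subset c}(\text{Box terms})\cdot h(\mathrm{lk}_{\Gamma|_c}\sigma)$. This $h^*$ identity holds because the lattice-point series of $c$ is decomposed by the simplicial cones of $\Gamma|_c$. Combining it with Lemma~\ref{lem:radial-relative-link} and the radial decomposition \eqref{eq:radial-weighted-local-decomposition} should then give the target.

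A cleaner route is to prove directly
\[
 \sum_q\dim H^{2q}_{\mathrm{orb}}t^q=\sum_{F\in\widehat\Ess(P)}(t-1)^{r-\dim c_F}h^*_{\psi_{c_F}}(c_F;t).
\]
The orbifold Poincaré series of the semi-projective toric stack is the weighted lattice-point count over the support, with $(t-1)$ factors from the torus orbits. One gets this by stratifying by orbits and using $\Boxop$ decompositions of $c\cap L_\Sigma$. Since the interior cones of $\Gamma$ refine the interior cones $c_F$ of $\Sigma_P$, the count depends only on $\Sigma_P$. The formula $P_{\mathrm{orb}}$ then follows from \eqref{eq:radial-weighted-local-decomposition}, and the right-hand side of \eqref{eq:crepant-subdivision-orbifold-new} is visibly independent of $\Gamma$.

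The main obstacle is this orbit-stratified weighted Ehrhart count on the noncompact semi-projective stack. One must check that compactly supported and ordinary cohomology match with the correct $(t-1)$ powers, via Poincaré duality on each sector, and that order-zero boundary cones, those not in $\widehat\Ess$, cancel. I would handle the boundary cones with the same relative-link Euler argument as in Lemma~\ref{lem:essential-link-euler}.
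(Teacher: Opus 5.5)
Your proposal is correct. Its second (``cleaner'') route differs genuinely from the paper's proof.

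\emph{The paper's route.} The paper also starts from the BCS sector sum $\sum_{c\in\Gamma}l^*_{\psi_c}(c;t)\,h^{\mathrm{ord}}(\Gamma/c;t)$. It gets independence of $\Gamma$ from the weighted local decomposition theorem, meaning invariance of this sum under projective crepant subdivision, applied to a common crepant refinement of $\Gamma$ and $\Sigma_P$. For the nonsimplicial fan it imports the identification with $\Fst$ from Theorem~7.1 of the companion article, and finishes with K\"unneth.

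\emph{Your first route.} This is essentially the paper's route. However, your description ``$l^*_{\psi_c}(c;t)$ times a correction'' is not an accurate statement of that decomposition theorem.

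\emph{Your second route.} Here you bypass the decomposition theorem entirely by collapsing the BCS sum directly.
\begin{itemize}
\item Every $m\in\tau^\circ$ is uniquely $v+\sum_{\rho\in\tau(1)}n_\rho b_\rho$, with $v\in\Boxop^\circ(\sigma)$ for some $\sigma\preceq\tau$ and $n_\rho\geq 1$ exactly when $\rho\notin\sigma(1)$.
\item Combine this with $h^{\mathrm{ord}}(\Gamma/\sigma;t)=\sum_{\tau\succeq\sigma}t^{\dim\tau-\dim\sigma}(1-t)^{r-\dim\tau}$. The sector sum then becomes $(1-t)^r\sum_{m\in|\Gamma|\cap L_\Sigma}t^{\psi_P(m)}$, which visibly depends only on $P$.
\item Apply M\"obius inversion over $\Sigma_P$. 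The coefficient $\sum_{c\succeq\tau}(-1)^{\dim c-\dim\tau}$ is $(-1)^{r-\dim\tau}$ for interior $\tau$, since the link is a sphere. It is $0$ for $\tau$ in the relative boundary, since the link is a ball. This yields exactly the $\widehat\Ess(P)$-sum.
\item Then \eqref{eq:radial-weighted-local-decomposition} gives $P_{\mathrm{orb}}(P;t)$.
\end{itemize}
So the ``main obstacle'' you flag is routine.

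\emph{What each route buys.} Yours is elementary. It needs only the BCS inertia description and purity of semi-projective toric stacks. It makes subdivision independence automatic, and with it zero-potential independence of $\Fst$, instead of citing it. The paper's route matches the sector sum term by term with the products $l^*_{\psi_c}(c;t)\,h^{\mathrm{ord}}(\Sigma_P/c;t)$. That structure is what gets refined into the $L$-equivariant decomposition of Proposition~\ref{prop:weak-fano-pure-sheaf-decomposition} and the Jordan-block comparison. Your lattice-point count sees only graded dimensions.

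\emph{A small slip.} Open Box elements of a nonzero cone $\sigma$ have age in $(0,\dim\sigma)$, not necessarily $<1$. This does not affect your formula, which only uses $t^{\psi_P(v)}$.
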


\begin{proof}
First work in the intrinsic lattice $L_\Sigma$.  Let
$\boldsymbol\Gamma\to\bSigma$ be a projective crepant simplicial
subdivision, with underlying fan $\Gamma$ and induced height function
$\psi$.  Its inertia components are indexed by open Box elements, and
their age contributions are
\[
 l^*_{\psi_c}(c;t)
 =\sum_{m\in\Boxop^\circ(c)}t^{\psi_c(m)},
\]
whereas the ordinary cohomology of the corresponding quotient fan is
$h^{\mathrm{ord}}(\Gamma/c;t)$.  Hence its sector polynomial is
\[
 \sum_{c\in\Gamma}
 l^*_{\psi_c}(c;t)h^{\mathrm{ord}}(\Gamma/c;t).
\]
The weighted local decomposition theorem shows that this sum is
unchanged under projective crepant subdivision
\cite{StapledonWeighted,KatzStapledonLocal}.  Since $\Gamma$ and the
canonical radial face fan $\Sigma_P$ admit a common projective
crepant refinement, the sum equals
$P_{\mathrm{orb}}(P;t)$ as defined in
\eqref{eq:stringy-orbifold-poincare-new}.

The Borisov--Chen--Smith formula in the complete case
\cite{borisov2005orbifold} and the semi-projective orbifold Chow
calculation of Jiang--Tseng \cite{JiangTsengSemiProjective} now give
\eqref{eq:crepant-subdivision-orbifold-new}.  For the original
possibly non-simplicial fan, the zero-potential specialization of
\cite[Theorem~7.1]{StringyICarr} identifies this common subdivision
value with its stringy invariant.

Finally, after choosing a splitting of the saturated inclusion
$L_\Sigma\subseteq L$, one has
\[
 U_{\boldsymbol\Gamma}
 \simeq(\mathbb C^*)^{d-r}
       \times\mathcal X_{\boldsymbol\Gamma,L_\Sigma}.
\]
Since
$\mathcal F((\mathbb C^*)^{d-r},0;X,Y)=(1+X)^{d-r}$,
the K\"unneth formula gives \eqref{eq:stringy-zero-potential-new}.
\end{proof}

We now lift the full-dimensional case of
\eqref{eq:crepant-subdivision-orbifold-new} from an equality of graded
dimensions to an orbifold-graded Lefschetz-module decomposition.  Assume
for the remainder of this subsection that $r=d$ and
$0\in\operatorname{int}(P)$, and write $N:=L$.  Retain from
Section~\ref{subsec:weighted-ehrhart-basic} the radial fan $\Sigma_P$,
its stacky enhancement $\boldsymbol\Sigma_P$, the local multiplicities
$n_{c,a}$ of \eqref{eq:local-age-multiplicities}, and the quotient-fan
Lefschetz data
$(\IH_c^{2\bullet},L_c,\mathsf P_c^{2j},b_{c,j})$ of
\eqref{eq:quotient-fan-primitive-decomposition}--
\eqref{eq:weak-fano-support-lefschetz-polynomial}.  Here
$m_c=d-\dim c$.

Choose a projective crepant simplicial subdivision
\[
 \boldsymbol\Gamma\longrightarrow\boldsymbol\Sigma_P
\]
and abbreviate the resulting smooth proper toric
Deligne--Mumford stack by
$\mathcal X_\Gamma:=\mathcal X_{\boldsymbol\Gamma,N}$.
Its anticanonical operator
\begin{equation}
 L:=c_1^{\mathrm{orb}}(\mathcal X_\Gamma)\cup(-)
 \label{eq:orbifold-anticanonical-operator}
\end{equation}
is represented by $\psi_P$.  It is nef and big, but need not be
ample: it is the pullback of the ample class under the toric
anticanonical contraction
\[
 \pi_\Gamma:\mathcal X_\Gamma\longrightarrow X_{\Sigma_P}.
\]
Thus ordinary hard Lefschetz for $L$ on each inertia sector cannot
simply be assumed.

\begin{proposition}[Compact-side age--Lefschetz refinement]
\label{prop:weak-fano-pure-sheaf-decomposition}
There is an $L$-equivariant noncanonical decomposition
\begin{equation}
 \bigoplus_{\mathfrak g\subset I\mathcal X_\Gamma}
 H^{2\bullet}(\mathcal X_{\mathfrak g};\mathbb C)
 \cong
 \bigoplus_{c\in\Sigma_P}\bigoplus_{a\in\mathbb Q}
 \mathbb C^{n_{c,a}}\otimes\IH_c^{2\bullet}.
 \label{eq:weak-fano-pure-sheaf-decomposition}
\end{equation}
On the left, a class in $H^{2j}(\mathcal X_{\mathfrak g};\mathbb C)$
has orbifold degree $2(j+\age(\mathfrak g))$.  On the right, the
summand $\mathbb C^{n_{c,a}}\otimes\IH_c^{2j}$ has orbifold degree
$2(a+j)$.  The decomposition preserves this total degree, and $L$
acts on the right as $1\otimes L_c$.
The local multiplicity degree $a$ need not equal the age of an
individual inertia sector: integral degree shifts may enter the
decomposition.  Its residue class modulo $\mathbb Z$, however, agrees
with the sector age modulo $\mathbb Z$ and thus determines the same
eigenvalue $e^{-2\pi ia}$.
The multiplicity spaces are canonical,
but a splitting of the displayed direct sum is not.  Forgetting the
$L$-action and taking graded dimensions recovers
\eqref{eq:crepant-subdivision-orbifold-new}.
\end{proposition}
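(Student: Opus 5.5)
The plan is to prove the decomposition with the decomposition theorem, applied sector by sector to the anticanonical contraction $\pi_\Gamma:\mathcal X_\Gamma\to X_{\Sigma_P}$. Nefness of $L$ is used only to see that $L$ is pulled back from the base. Each inertia component $\mathcal X_{\mathfrak g}$ corresponds to an open Box element $m\in\Boxop^\circ(\gamma)$ for some cone $\gamma\in\Gamma$. It is the closed toric substack $\overline{O_\gamma}$, whose coarse space is the toric variety of the quotient fan $\Gamma/\gamma$. Let $c\in\Sigma_P$ be the smallest cone containing $\gamma$. Then $\pi_\Gamma$ maps $\mathcal X_{\mathfrak g}$ properly onto the orbit closure $V(c)=\overline{O_c}\subset X_{\Sigma_P}$, and this map is a toric morphism of fans $\Gamma/\gamma\to\Sigma_P/c$. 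With rational coefficients the coarse space of a smooth DM stack is a rational homology manifold, so its constant sheaf is its IC sheaf. The toric decomposition theorem of de Cataldo--Migliorini--Musta\c{t}\u{a} then gives, for each sector,
\[
 R\pi_{\mathfrak g*}\mathbb Q_{\mathcal X_{\mathfrak g}}[\dim]
 \cong\bigoplus_{c\preceq c'\in\Sigma_P}
 \mathrm{IC}_{V(c')}\otimes M_{\mathfrak g,c'}^\bullet .
\]
The local systems on torus orbits are trivial, so each $M^\bullet_{\mathfrak g,c'}$ is a graded vector space. Taking hypercohomology and using $\IH^\bullet(V(c'))=\IH^{2\bullet}(\Sigma_P/c')=\IH_{c'}^{2\bullet}$ gives a noncanonical splitting of $H^{2\bullet}(\mathcal X_{\mathfrak g})$ into terms $M^\bullet_{\mathfrak g,c'}\otimes\IH_{c'}^{2\bullet}$. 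The $L$-equivariance is formal. Since $L=\pi_\Gamma^*\ell$ with $\ell$ the ample class of $\psi_P$ on $X_{\Sigma_P}$, the projection formula makes cup product with $L$ act through $\ell$ on each $\mathrm{IC}_{V(c')}$. There $\ell|_{V(c')}$ is represented by the height function on $\Sigma_P/c'$, so it acts as $1\otimes L_{c'}$. The decomposition itself is canonical only up to the usual noncanonical choice of splitting, which matches the statement.

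Next I would regroup by degree. Put $\mathcal M_{c'}^\bullet:=\bigoplus_{\mathfrak g}M^\bullet_{\mathfrak g,c'}[-2\age(\mathfrak g)]$, where the shift encodes the orbifold grading. This is the canonical multiplicity space, since the summands indexed by $\mathrm{IC}_{V(c')}$ form the canonical isotypic part of a semisimple perverse decomposition. It remains to show that its graded dimension is $\sum_a n_{c',a}t^a=l^*_{\psi_{c'}}(c';t)$. To do this I would localize to the affine toric neighbourhood $U_{c}\subset X_{\Sigma_P}$ of the orbit $O_{c}$ and take the stalk at a point of $O_{c}$. On the inertia side the fibre is the preimage of $U_c$. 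Its contribution, summed over sectors with age shifts, is the weighted semigroup series of the cone $c$ truncated by $\psi$: Box elements of cones of $\Gamma$ lying in $c$, each weighted by the Poincaré polynomial of its local fibre. This is exactly $h^*_{\psi_c}(c;t)$. On the decomposition side, the stalk of $\mathrm{IC}_{V(c')}$ at $O_c$ for $c'\preceq c$ has Poincaré polynomial $g([c',c];t)$. So the stalk identity reads $h^*_{\psi_c}(c;t)=\sum_{c'\preceq c}\dim_t\mathcal M_{c'}\, g([c',c];t)$. Comparing with \eqref{eq:weighted-local-decomposition-new} and inducting on $\dim c$ via Stanley $g$-kernel inversion gives $\dim_t\mathcal M_c=l^*_{\psi_c}(c;t)$.

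I expect this stalk computation to be the main obstacle. One must check that the fibre cohomology over $O_c$, summed across inertia components with age shifts, really assembles to $h^*_{\psi_c}(c;t)$ in the stacky, non-simplicial $c$ setting. I would try it first by reducing to the crepant-invariance argument of Stapledon and Katz--Stapledon, now applied to the single cone $c$ with the induced subdivision $\Gamma|_c$. This step is also where the caveat about degree shifts enters. A summand $M^\bullet_{\mathfrak g,c'}$ can sit in nonzero even degrees, so the degree $a$ on the right equals $\age(\mathfrak g)$ plus an integer. Its residue mod $\mathbb Z$ is therefore the sector age mod $\mathbb Z$. Finally, forgetting $L$ and taking graded dimensions turns the decomposition into $\sum_c l^*_{\psi_c}(c;t)\,h^{\mathrm{ord}}(\Sigma_P/c;t)=P_{\mathrm{orb}}(P;t)$, which recovers \eqref{eq:crepant-subdivision-orbifold-new} as a consistency check.
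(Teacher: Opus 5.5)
Your argument is correct, but it reaches the decomposition by a different route from the paper's.

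\textbf{The paper's route.} The paper never leaves the fan. Through the companion article's Box--Koszul comparison, it realizes $\bigoplus_{\mathfrak g}H^{2\bullet}(\mathcal X_{\mathfrak g};\C)$ as the zero-potential realization of a sheaf $\mathcal B^\Gamma_{\Sigma_P}$ of conewise-polynomial modules. It then imports the combinatorial pure-sheaf decomposition $\mathcal B^\Gamma_{\Sigma_P}\simeq\bigoplus_c R^\Gamma_1(c)\otimes\mathcal L^c_{\Sigma_P}$, with $\operatorname{Hilb}R^\Gamma_1(c)=l^*_{\psi_c}(c;t)$. The multiplicities, the canonicity of the primitive kernels and the $L$-linearity all come from that formalism.

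\textbf{Your route.} You apply the topological decomposition theorem to each sector map $X(\Gamma/\gamma)\to V(c)$. You then compute the multiplicities yourself, by stalks and $g$-inversion against \eqref{eq:weighted-local-decomposition-new}. Your $L$-equivariance is indeed formal. Cup product with $\pi^*\ell$ acts on $R\pi_*$ through the natural transformation $\ell\otimes(-)$, so it acts diagonally on any splitting.

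\textbf{The step you flag.} The stalk identity holds by a direct count. Take $x\in O_c$ and a sector from $\Boxop^\circ(\gamma)$ with $\gamma\subseteq c$. Its fibre is stratified by the tori $O_\delta\cap\pi^{-1}(x)\cong(\C^*)^{\dim c-\dim\delta}$, for $\delta\succeq\gamma$ with $\delta^\circ\subset c^\circ$. The decomposition theorem itself makes the fibre cohomology even and pure Tate, because toric IC stalks are. Sum the age-weighted Poincar\'e polynomials and convert open boxes into closed boxes. This gives
\[
 \sum_{\delta\in\Gamma,\ \delta^\circ\subset c^\circ}
 (t-1)^{\dim c-\dim\delta}(1-t)^{\dim\delta}
 \sum_{m\in\delta\cap N}t^{\psi(m)}
 =(1-t)^{\dim c}\sum_{m\in c\cap N}t^{\psi_c(m)}
 =h^*_{\psi_c}(c;t).
\]
The equality holds because, for each lattice point $m\in c$, the signed count $\sum(-1)^{\dim\delta}$ over such $\delta\ni m$ equals $(-1)^{\dim c}$. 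That count is governed by the Euler characteristic of the link of the carrier of $m$, which is either a sphere or a ball whose interior faces are exactly the interior cones. Your induction via $g([c,c];t)=1$ then finishes, and the integral shifts $a-\age(\mathfrak g)\in\mathbb Z$ appear exactly as you describe.

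\textbf{Comparison.} Your route gives the multiplicity spaces a geometric meaning, as isotypic parts of the perverse cohomology of the sector maps. It also checks the companion article's combinatorics independently. The cost is that it needs rationality, together with a comparison of $\IH(\Sigma_P/c)$ and $L_c$ with geometric intersection cohomology of $V(c)$ and cup product with $\ell|_{V(c)}$. The paper's route needs no geometric comparison theorem. It keeps the decomposition inside the fan-sheaf framework in which the stringy invariant of a non-simplicial fan is defined.
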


\begin{proof}
The cohomological realization used in the proof of
Proposition~\ref{prop:stringy-orbifold-realization-new} admits a
chain-level refinement.  Under the Borisov--Chen--Smith indexing of
inertia sectors by Box elements, the Box--Koszul comparison sends a
sector label to the corresponding monomial in
$\mathcal B_{\Sigma_P}^\Gamma$, whose $\psi_P$-degree is precisely the
age of that sector.  After taking the Koszul quotient by the global
linear relations and then global sections, it therefore identifies
the zero-potential realization of $\mathcal B_{\Sigma_P}^\Gamma$ with
\[
 \bigoplus_{\mathfrak g\subset I\mathcal X_\Gamma}
 H^{2\bullet}(\mathcal X_{\mathfrak g};\mathbb C)
\]
with its total orbifold grading: a sector class of ordinary degree
$2j$ and age $\age(\mathfrak g)$ has total degree
$2(j+\age(\mathfrak g))$.  The inertia sectors are smooth proper
toric stacks, so this is their ordinary, purely even cohomology.  The
identification follows from the BCS inertia description
\cite{borisov2005orbifold} and the Box--Koszul and simplicial
comparisons
\cite[Proposition~4.1 and Theorem~4.2]{StringyICarr}.

The pure-sheaf decomposition theorem then gives a noncanonical
decomposition
\[
 \mathcal B_{\Sigma_P}^\Gamma
 \simeq\bigoplus_cR_1^\Gamma(c)\otimes
 \mathcal L_{\Sigma_P}^c,
 \qquad
 \operatorname{Hilb}R_1^\Gamma(c)=l^*_{\psi_c}(c;t),
\]
whose multiplicity spaces are the canonical primitive kernels of the
boundary maps
\cite[Theorem~5.2 and Proposition~6.4]{StringyICarr}.  Applying the
same zero-potential realization termwise, the primitive--support
decomposition identifies the contribution of the simple pure sheaf
$\mathcal L_{\Sigma_P}^c$ with
\[
 \IH^{2\bullet}(\Sigma_P/c)=\IH_c^{2\bullet}
\]
\cite[Proposition~6.6]{StringyICarr}.  On the other hand,
$\operatorname{Hilb}R_1^\Gamma(c)=l^*_{\psi_c}(c;t)$ says precisely
that the degree-$a$ part of $R_1^\Gamma(c)$ has dimension $n_{c,a}$.
Tensoring it with $\IH_c^{2j}$ gives total orbifold degree $2(a+j)$.
Since the support degree $j$ is integral, this also preserves the
residue class of the sector age modulo $\mathbb Z$, without requiring
the splitting to preserve each exact sector age separately.
Choosing graded identifications of these parts with
$\mathbb C^{n_{c,a}}$ therefore yields
\eqref{eq:weak-fano-pure-sheaf-decomposition}.

Finally, multiplication by the radial conewise-linear function
$\psi_P$ realizes cup product with $L=c_1^{\mathrm{orb}}(\mathcal
X_\Gamma)$.  Since the pure-sheaf decomposition is linear over the
conewise-polynomial coefficient sheaf, its restriction to the
$c$-supported summand is $1\otimes L_c$.  The primitive multiplicity
spaces are canonical, but the splitting above, and hence the displayed
direct-sum isomorphism, is not.
\end{proof}

Together with the primitive decomposition
\eqref{eq:quotient-fan-primitive-decomposition}, the proposition shows
that the orbifold-graded dimensions and the Lefschetz-string
multiplicities in each age class modulo $\mathbb Z$ are independent
of the chosen crepant subdivision.

\subsection{The weak Fano stringy mirror formula and its monodromy refinement}
\label{subsec:weak-fano-stringy-interface}

Let $P\subset N_{\mathbb R}$ be a positive-dimensional lattice
polytope containing $0$, put
\[
 d:=\rank N,
 \qquad r:=\dim P,
 \qquad N_P:=N\cap\Span_{\mathbb R}P,
\]
let $f\in\C[N]$ be nondegenerate at infinity with $P_{f,\infty}=P$,
and choose any quasi-projective stacky log-$\mathbb Q$-Gorenstein fan
$\bSigma$ in $N$ with convex radial support $P_{\bSigma}=P$.  Then
$(\{0\},f)$ and $(\bSigma,0)$ form a mirror pair in the ambient dual
lattices.  If $0\in\relint(P)$ and $r=d$, this is the usual weak Fano
specialization; the same formula covers every semi-projective or
non-spanning radial fan presentation.

\begin{corollary}[Stringy Fano--Landau--Ginzburg mirror formula]
\label{cor:weak-fano-harder-lee-new}
For every $f$ and every $\bSigma$ as above, one has
\begin{equation}
 \Fst(\{0\},f;X,Y)
 =X^d\Fst(\bSigma,0;X^{-1},Y).
 \label{eq:weak-fano-stringy-mirror-new}
\end{equation}
\end{corollary}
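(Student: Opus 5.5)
The plan is to compute both sides of \eqref{eq:weak-fano-stringy-mirror-new} in terms of $P_{\mathrm{orb}}(P;-)$ and compare. The two inputs are Corollary~\ref{cor:newton-irregular-hodge-new} on the Landau--Ginzburg side and Proposition~\ref{prop:stringy-orbifold-realization-new} on the toric side.

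\emph{Left-hand side.} First identify $\Fst(\{0\},f;X,Y)$ with $\mathcal F(T_M,f;X,Y)$. The trivial fan $\{0\}$ is already simplicial, and its toric Deligne--Mumford model is the torus $T_M$. Its inertia stack has a single untwisted sector of age $0$, so \eqref{eq:orbifold-irregular-hodge-body} and \eqref{eq:stringy-irregular-hodge-body} reduce to $\mathcal F(T_M,f;X,Y)$. Corollary~\ref{cor:newton-irregular-hodge-new} then gives
\[
 \Fst(\{0\},f;X,Y)=X^r(1+X)^{d-r}P_{\mathrm{orb}}(P;Y/X).
\]
This step rests on two earlier inputs: the motivic Newton formula, and middle-degree concentration (Lemma~\ref{lem:laurent-middle-degree-concentration}) via the Qin--Zhang comparison. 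Both have already been established.

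\emph{Right-hand side.} Since $P_{\bSigma}=P$, Proposition~\ref{prop:stringy-orbifold-realization-new} gives $\Fst(\bSigma,0;X,Y)=(1+X)^{d-r}P_{\mathrm{orb}}(P;XY)$. Substitute $X\mapsto X^{-1}$ and multiply by $X^d$:
\[
 X^d(1+X^{-1})^{d-r}P_{\mathrm{orb}}(P;Y/X)
 =X^{r}(1+X)^{d-r}P_{\mathrm{orb}}(P;Y/X),
\]
using $X^{d-r}(1+X^{-1})^{d-r}=(1+X)^{d-r}$. This agrees with the left-hand side, which proves \eqref{eq:weak-fano-stringy-mirror-new}.

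\emph{Main obstacle.} The algebra is trivial, so the real issue is bookkeeping. One must check that the lattice $L$ and the polytope $P$ in Proposition~\ref{prop:stringy-orbifold-realization-new} are those of $\bSigma$ in $N$, with $r=\dim P$ the same integer on both sides. One must also check that the intrinsic radial face fan $\Sigma_P$ used to define $P_{\mathrm{orb}}$ depends only on $P$, not on $\bSigma$ or on $f$. This holds by definition \eqref{eq:stringy-orbifold-poincare-new}, together with the subdivision-independence argument already given in the proof of Proposition~\ref{prop:stringy-orbifold-realization-new}. That proof passes to a common projective crepant refinement of $\Gamma$ and $\Sigma_P$; for a non-complete $\bSigma$ with $0$ on the relative boundary, this refinement should be checked to exist. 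If needed, I would cite the weighted local decomposition theorem for fans with convex support, which is exactly the setting of Lemma~\ref{lem:radial-relative-link}.
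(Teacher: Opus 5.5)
Your proposal is correct and follows the paper's own proof: you identify $\Fst(\{0\},f;X,Y)$ with $\mathcal F(T_M,f;X,Y)$ via the single untwisted sector of the torus, evaluate it by \cref{cor:newton-irregular-hodge-new} and the toric side by \cref{prop:stringy-orbifold-realization-new}, and compare using $X^{d}(1+X^{-1})^{d-r}=X^r(1+X)^{d-r}$. The existence of a common crepant refinement, which you flag, concerns the proof of \cref{prop:stringy-orbifold-realization-new} rather than this corollary, which only invokes that proposition as stated.
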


\begin{proof}
For the zero fan, the stringy invariant is the ordinary irregular
Hodge polynomial of its torus.  Thus
Corollary~\ref{cor:newton-irregular-hodge-new} gives
\begin{equation*}
 \Fst(\{0\},f;X,Y)
 =X^r(1+X)^{d-r}P_{\mathrm{orb}}(P;Y/X).
\end{equation*}
On the other hand,
Proposition~\ref{prop:stringy-orbifold-realization-new} gives
\begin{align*}
 X^d\Fst(\bSigma,0;X^{-1},Y)
 &=X^d(1+X^{-1})^{d-r}
   P_{\mathrm{orb}}(P;Y/X)\\
 &=X^r(1+X)^{d-r}
   P_{\mathrm{orb}}(P;Y/X).
\end{align*}
Comparing the two expressions proves
\eqref{eq:weak-fano-stringy-mirror-new}, the corresponding instance of
the stringy Harder--Lee formula
\cite[Corollary~7.2]{StringyICarr}.
\end{proof}

The identity above uses the specialization $v=1$ and therefore
forgets the limit-weight grading and the nilpotent part of monodromy.
Assume henceforth that $r=d$ and $0\in\operatorname{int}(P)$, choose
$\boldsymbol\Gamma\to\boldsymbol\Sigma_P$ as in the preceding
subsection, and retain the corresponding $\mathcal X_\Gamma$ and $L$.
Also retain $V_{\mathbb C}$ and $V_\lambda$ from
\eqref{eq:weak-fano-semisimple-eigenspace}.  By
Lemma~\ref{lem:laurent-middle-degree-concentration}, $V_{\mathbb C}$
is the only nonzero relative cohomology group at infinity.  The full
Newton expression determines its semisimple and nilpotent monodromy
data by Corollary~\ref{cor:newton-monodromy-multiplicities}; the
compact-side decomposition above realizes the same local
multiplicities through inertia ages and anticanonical Lefschetz
strings.

\begin{corollary}[The age--monodromy correspondence]
\label{cor:weak-fano-age-monodromy}
Let $T_\infty$ be the affine $t$-plane monodromy of
\eqref{eq:affine-and-limit-monodromy-conventions}, acting on
$V_{\mathbb C}$, and let $A_{\mathrm{age}}$ denote the semisimple
operator on $H^*_{\mathrm{orb}}(\mathcal X_\Gamma;\mathbb C)$ whose
restriction to $H^*(\mathcal X_{\mathfrak g};\mathbb C)$ is
multiplication by $e^{-2\pi i\age(\mathfrak g)}$.
\begin{enumerate}[label=\textup{(\roman*)}]
\item The semisimple part satisfies
\begin{equation}
 \det(z-T_{\infty,s})
 =\prod_{\mathfrak g\subset I\mathcal X_\Gamma}
 \left(z-e^{-2\pi i\age(\mathfrak g)}\right)^{
 \dim H^*(\mathcal X_{\mathfrak g};\mathbb C)}.
 \label{eq:weak-fano-age-characteristic-polynomial}
\end{equation}
For the convention $T_\Phi$ used by $E_\Phi^\infty$, all eigenvalues
on the right are inverted.

\item On every eigenvalue summand, $N_\infty$ and $-L$ have the same
Jordan form, where the age-$a$ sector on the compact side is assigned
the eigenvalue $e^{-2\pi ia}$.  More precisely, if $J_{k,\lambda}$
denotes the number of blocks of size $k$ on the $\lambda$-eigenspace,
then it is given by \eqref{eq:weak-fano-jordan-block-number}.  The
individual summand $n_{c,a}b_{c,j}$ is the numerical contribution of
the local term $(c,a,j)$, not a canonical label on a Jordan block of
$N_\infty$.

Consequently, over $\mathbb C$ there is a noncanonical isomorphism
\begin{equation}
 \left(V_{\mathbb C},T_{\infty,s},N_\infty\right)
 \cong_{\mathrm{noncan}}
 \left(
  H^*_{\mathrm{orb}}(\mathcal X_\Gamma;\mathbb C),
  A_{\mathrm{age}},
  -c_1^{\mathrm{orb}}(\mathcal X_\Gamma)\cup(-)
 \right).
 \label{eq:weak-fano-age-monodromy-correspondence}
\end{equation}
Equivalently, in an unnormalized logarithm convention,
\begin{equation}
 \log T_{\infty,u}
 \cong_{\mathrm{noncan}}
 -2\pi i\,c_1^{\mathrm{orb}}(\mathcal X_\Gamma)\cup(-).
 \label{eq:weak-fano-unnormalized-logarithm}
\end{equation}
\end{enumerate}
\end{corollary}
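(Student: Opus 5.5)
The plan is to compare both sides through the numerical data already extracted: Corollary~\ref{cor:newton-monodromy-multiplicities} on the Landau--Ginzburg side and Proposition~\ref{prop:weak-fano-pure-sheaf-decomposition} on the compact side. A pair (finite-dimensional complex space, semisimple operator, nilpotent operator commuting with it) is determined up to isomorphism by two pieces of data: the dimensions of the eigenspaces of the semisimple part, and, on each eigenspace, the number of Jordan blocks of each size of the nilpotent part. Since $T_{\infty,s}$ commutes with $N_\infty$, it therefore suffices to match, for every $\lambda$, the eigenspace dimensions and the block counts $J_{k,\lambda}$. The isomorphism \eqref{eq:weak-fano-age-monodromy-correspondence} and its unnormalized form then follow formally; for the latter, write $\log T_{\infty,u}=2\pi i N_\infty$.

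For (i), equation \eqref{eq:weak-fano-semisimple-characteristic-by-eigenspace} reduces the claim to computing $\dim V_\lambda$. Formula \eqref{eq:weak-fano-eigenvalue-multiplicity-local} gives $\dim V_\lambda$ as a sum of $n_{c,a}\dim\IH_c^{2\bullet}$ over ages $a$ with $e^{-2\pi ia}=\lambda$. Taking dimensions in \eqref{eq:weak-fano-pure-sheaf-decomposition} and grouping by residue class, I will use the part of the proposition stating that the degree $a$ of the local multiplicity agrees modulo $\mathbb Z$ with the sector age. This identifies the same sum with $\sum_{\mathfrak g:\,e^{-2\pi i\age(\mathfrak g)}=\lambda}\dim H^*(\mathcal X_{\mathfrak g})$. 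The two eigenvalue multiplicities therefore agree, which is (i). The $T_\Phi$ statement then follows because $T_\Phi=T_\infty^{-1}$.

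For (ii), on the compact side $A_{\mathrm{age}}$ commutes with $L$, because cup product with $c_1^{\mathrm{orb}}$ preserves each inertia sector. The $\lambda$-eigenspace of $A_{\mathrm{age}}$ is then the sum of the summands $\mathbb C^{n_{c,a}}\otimes\IH_c^{2\bullet}$ with $e^{-2\pi ia}=\lambda$, on which $L$ acts by $1\otimes L_c$. The step needing the most care is that the decomposition \eqref{eq:weak-fano-pure-sheaf-decomposition} is only noncanonical, so I must check that it respects the eigenspaces of $A_{\mathrm{age}}$. My plan is to argue that the proposition preserves total orbifold degree, and that the residue of the total degree modulo $2\mathbb Z$ equals $2\age$ modulo $2\mathbb Z$. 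Hence the splitting maps each age class into the corresponding $\lambda$-eigenspace.

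Granting this, the primitive decomposition \eqref{eq:quotient-fan-primitive-decomposition} shows that each $\mathsf P_c^{2j}$ generates $b_{c,j}$ Lefschetz strings of length $m_c-2j+1$ under $L_c$. So $L$, and likewise $-L$, has on the $\lambda$-eigenspace exactly $\sum n_{c,a}b_{c,j}$ Jordan blocks of size $k$, summed over $e^{-2\pi ia}=\lambda$ and $m_c-2j+1=k$. This is \eqref{eq:weak-fano-jordan-block-number}, which Corollary~\ref{cor:newton-monodromy-multiplicities} also gives for $N_\infty$. Matching the block counts on every eigenspace yields the noncanonical isomorphism. Finally, I will stress that each $n_{c,a}b_{c,j}$ is only a numerical count: neither side's splitting is canonical, so the count does not label an actual block.
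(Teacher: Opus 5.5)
Your proposal is correct and follows the paper's proof essentially step for step. Eigenspace dimensions come from \eqref{eq:weak-fano-eigenvalue-multiplicity-local} matched with the age-class count in \cref{prop:weak-fano-pure-sheaf-decomposition}, and Jordan counts come from the primitive Lefschetz decomposition matched with \cref{cor:newton-monodromy-multiplicities}; your explicit check that the degree-preserving splitting respects the $A_{\mathrm{age}}$-eigenspaces spells out what the paper compresses into ``assigning the degree-$a$ summand the eigenvalue $e^{-2\pi ia}$'', and the only point you leave implicit is the vanishing of odd cohomology of the toric inertia sectors, which the paper uses to pass from $H^{2\bullet}$ to $H^*$.
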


\begin{proof}
\emph{The semisimple part.}
For $V_\lambda$ as in
\eqref{eq:weak-fano-semisimple-eigenspace}, the Newton--Hodge
calculation \eqref{eq:weak-fano-eigenvalue-multiplicity-local}
gives its dimension.  On the compact side, taking dimensions in a
fixed age class $\beta\in\mathbb Q/\mathbb Z$ in
\eqref{eq:weak-fano-pure-sheaf-decomposition} gives
\begin{equation}
 \sum_{\substack{\mathfrak g\subset I\mathcal X_\Gamma\\
                  \age(\mathfrak g)\equiv\beta\; (\mathrm{mod}\,\mathbb Z)}}
 \dim H^*(\mathcal X_{\mathfrak g};\mathbb C)
 =\sum_{\substack{a\in\mathbb Q\\a\equiv\beta\; (\mathrm{mod}\,\mathbb Z)}}
 \sum_{c\in\Sigma_P}
 n_{c,a}\dim\IH_c^{2\bullet}.
 \label{eq:weak-fano-age-multiplicity-compact}
\end{equation}
Here the inertia sectors are smooth proper toric stacks, so their odd
cohomology vanishes and $\dim H^*=\dim H^{2\bullet}$.
Comparing
\eqref{eq:weak-fano-eigenvalue-multiplicity-local} and
\eqref{eq:weak-fano-age-multiplicity-compact} yields
\begin{equation}
 \dim V_\lambda
 =\sum_{\substack{\mathfrak g\subset I\mathcal X_\Gamma\\
     e^{-2\pi i\age(\mathfrak g)}=\lambda}}
 \dim H^*(\mathcal X_{\mathfrak g};\mathbb C).
 \label{eq:weak-fano-eigenspace-sector-dimension}
\end{equation}
Substitution into
\eqref{eq:weak-fano-semisimple-characteristic-by-eigenspace} proves
\eqref{eq:weak-fano-age-characteristic-polynomial}.  Replacing
$T_\infty$ by $T_\Phi=T_\infty^{-1}$ inverts every eigenvalue.

\emph{The nilpotent part.}
Proposition~\ref{prop:weak-fano-pure-sheaf-decomposition} identifies
the action of $L$ on the $(c,a)$-summand with $1\otimes L_c$, so the
primitive decomposition
\eqref{eq:quotient-fan-primitive-decomposition} gives
$n_{c,a}b_{c,j}$ blocks of size $m_c-2j+1$ on the compact side as
well.  After assigning the local degree-$a$ summand the eigenvalue
$e^{-2\pi ia}$, the number of size-$k$ blocks in its
$\lambda$-graded part is therefore the right-hand side of
\eqref{eq:weak-fano-jordan-block-number}.  By
Corollary~\ref{cor:newton-monodromy-multiplicities}, this is exactly
the number of size-$k$ blocks of $N_\infty$ on $V_\lambda$.
Thus the two nilpotent operators have the same Jordan form on every
eigenvalue summand.  The normalization
\eqref{eq:normalized-monodromy-logarithms} supplies the displayed
minus sign; multiplication by $-1$ does not change the block sizes.
Together with part~\textup{(i)}, this gives
\eqref{eq:weak-fano-age-monodromy-correspondence}; multiplying the
normalized logarithm by $2\pi i$ gives
\eqref{eq:weak-fano-unnormalized-logarithm}.
\end{proof}

In the simplicial, vertex-supported case, the resulting
eigenvalue-graded nilpotent comparison specializes to
\cite[Corollary~4.5]{WangNewtonHodge}.  The pure-sheaf formulation
above replaces the sectorwise Stanley--Reisner rings appearing there
by primitive multiplicity spaces tensored with simple pure sheaves.

The Newton formula determines the Hodge multiplicities, while the
comparison above identifies the eigenvalue-graded Jordan types over
$\mathbb C$.  The resulting isomorphism is noncanonical and does not
identify rational polarized limit mixed Hodge structures or recover
their extension and polarization data.  The sign in
\eqref{eq:weak-fano-age-monodromy-correspondence} is fixed by the
monodromy convention; a canonical operator comparison would require
additional geometric or categorical input.

\end{document}